\documentclass[12pt]{amsart}
\usepackage{graphicx}
\usepackage[centertags]{amsmath}
\usepackage{amsfonts}
\usepackage{amssymb}
\usepackage{amsthm}

\vfuzz2pt 
\hfuzz2pt 
\newtheorem{thm}{Theorem}
\newtheorem*{thm*}{Theorem}
\newtheorem{lem}{Lemma}[section]
\newtheorem{cor}{Corollary}[section]
\newtheorem*{cor*}{Corollary}

\newtheorem{prop}[lem]{Proposition}
\theoremstyle{definition}

\theoremstyle{remark}
\newtheorem{rem}{Remark}[section]
\numberwithin{equation}{section}


\newcommand{\set}[1]{\left\{#1\right\}}

\newcommand{\smatrix}[4]{\left(\begin{smallmatrix} #1& #2\\ #3& #4\end{smallmatrix}\right)}
\newcommand{\eps}{\varepsilon}

\newcommand{\vphi}{{\varphi}}

\newcommand{\calA}{\mathcal{A}}
\newcommand{\calC}{\mathcal{C}}

\newcommand{\D}{\mathrm{D}}
\newcommand{\DL}{\mathrm{D}_{\mathrm{L}}}

\newcommand{\calH}{\mathcal{H}}

\newcommand{\calM}{\mathcal{M}}

\newcommand{\calO}{\mathcal{O}}

\newcommand{\bbZ}{\mathbb{Z}}

\newcommand{\bbQ}{\mathbb{Q}}
\newcommand{\bbR}{\mathbb R}
\newcommand{\bbC}{\mathbb C}

\newcommand{\bbF}{\mathbb F}
\newcommand{\bbN}{\mathbb N}

\newcommand{\bbH}{\mathbb{H}}

\newcommand{\fraka}{\mathfrak{a}}

\newcommand{\frakg}{\mathfrak{g}}
\newcommand{\frakk}{\mathfrak{k}}
\newcommand{\frakn}{\mathfrak{n}}
\newcommand{\frakm}{\mathfrak{m}}
\newcommand{\frakq}{\mathfrak{q}}
\newcommand{\frakp}{\mathfrak{p}}
\newcommand{\frakP}{\mathfrak{P}}

\newcommand{\ls}{\textsc{ls}}
\newcommand{\pls}{\textsc{ls}^o}
\newcommand{\s}{\textsc{s}}
\newcommand{\m}{\mathrm{m}}
\newcommand{\Tr}{ \mbox{Tr}}

\newcommand{\SL}{ \mathrm{SL}}
\newcommand{\SO}{ \mathrm{SO}}
\newcommand{\Sl}{ \mathrm{sl}}
\newcommand{\PSL}{ \mathrm{PSL}}

\newcommand{\SU}{ \mathrm{SU}}

\newcommand{\GL}{ \mbox{GL}}

\newcommand{\Sp}{\mathrm{Sp}}

\newcommand{\Mat}{\mathrm{Mat}}
\newcommand{\Gal}{\mathrm{Gal}}

\newcommand{\vol}{\mathrm{vol}}

\newcommand{\bs}{\backslash}
\newcommand{\id}{1\!\!1}

\newcommand{\Ad}{\mathrm{Ad}}
\newcommand{\ad}{\mathrm{ad}}

\newcommand{\sgn}{\mathrm{sgn}}


\begin{document}
\title[Refinement of multiplicity one]
{A refinement of strong multiplicity one for spectra of hyperbolic manifolds}%
\author{Dubi Kelmer}%
\address{Department of Mathematics, 301 Carney Hall, Boston College
Chestnut Hill, MA 02467}
\email{dubi.kelmer@bc.edu}

\thanks{}%
\subjclass{}%
\keywords{}%

\date{\today}%
\dedicatory{}%
\commby{}%

\maketitle

\begin{abstract}
Let $\calM_1$ and $\calM_2$ denote two compact hyperbolic manifolds. Assume that the multiplicities of eigenvalues of the Laplacian acting on $L^2(\calM_1)$ and $L^2(\calM_2)$ (respectively, multiplicities of lengths of closed geodesics in $\calM_1$ and $\calM_2$) are the same, except for a possibly infinite exceptional set of eigenvalues (respectively lengths). We define a notion of density for the exceptional set and show that if it is below a certain threshold, the two manifolds must be iso-spectral.
\end{abstract}

\section*{introduction}
By a hyperbolic manifold in this paper we refer to a quotient of a real, complex, quaternionic, or octonionic hyperbolic space.
Specifically, let $G$ denote a connected semisimple Lie group of real rank one, $K\subset G$ a maximal
compact subgroup and $\calH=G/K$ the corresponding symmetric space endowed with the hyperbolic Riemannian metric coming from the
Killing form on $\frakg=\rm{Lie}(G)$. Then for any uniform torsion free lattice $\Gamma\subset G$, the locally symmetric space $\calM=\Gamma\bs \calH$ has the structure of a compact hyperbolic manifold. By the spectra of $\calM$, that we also call the spectra of $\Gamma$, we refer to any one of the following notions described below.

The \emph{Laplace spectrum} is the set of eigenvalues of the Laplace-Beltrami operator acting on $L^2(\calM)$ listed with multiplicities. The \emph{representation spectrum} is the set of $\pi\in \hat{G}$ occurring in the right regular representation of $G$ on $L^2(\Gamma\bs G)$ listed with multiplicities. Here $\hat{G}$ denotes the unitary dual of $G$, that is, the set of equivalence classes of unitary representations of $G$. We note that the representation spectrum determines the Laplace spectrum as the multiplicity of each Laplace eigenvalue is given by the multiplicity of a corresponding representation. We say that two lattices are Laplace equivalent (or iso-spectral) if they have the same Laplace spectrum, and we say that they are representation equivalent if they have the same representation spectrum.

The \emph{length spectrum} (respectively \emph{primitive length spectrum}) is the set of lengths of closed geodesics (respectively primitive closed geodesics) in $\calM$ listed with multiplicities. 
The \emph{complex length spectrum} is the set of pairs, length and holonomy class, of all closed geodesics listed with multiplicities, where the holonomy class of a closed geodesic is the conjugacy class in $\SO(d-1),\; d=\dim(\calM)$ obtained by parallel transporting tangent vectors along the geodesic.  
We say that two spaces are length equivalent if they have the same length spectrum, and that they are complex length equivalent if they have the same complex length spectrum.

When $\calM$ is a hyperbolic surface, Huber \cite{Huber59} showed that the Laplace spectrum and the length spectrum determine each other (and they also determine the representation and complex length spectra). In higher dimensions the complex length spectrum and the representation spectrum still determine each other (see e.g. \cite{Salvai02}), however, the relation between the Laplace spectrum and the length spectrum is more mysterious.

In \cite{Gangolli77}, Gangolli generalized Huber's result to all hyperbolic manifolds, however, this generalization involves a different notion of length spectrum, we will call the $1$-length spectrum, assigning to each length a certain real positive number (see remark \ref{r:sigmalength}).
His result implies in particular that the Laplace spectrum of a hyperbolic manifold determines the length set (this is in fact true for all negatively curved manifolds \cite{DuistermaatGuillemin75}). However, the question of whether it determines the multiplicities, and the converse question of whether the length spectrum determines the Laplace spectrum were left open.

In \cite{BhagwatRajan10},  Bhagwat and Rajan showed that if two lattices satisfy that all but finitely many Laplace eigenvalues (respectively representations) have the same multiplicities, then they are Laplace equivalent (respectively, representation equivalent\footnote{The result on the representation spectrum is proved for any semisimple group not necessarily of rank one.}). In \cite{BhagwatRajan11} they studied the length spectrum of real hyperbolic manifolds of even dimension, showing that if two lattices have the same multiplicities for all but finitely many lengths, then they are length equivalent.
Similar results were previously shown by Elstrodt, Grunewald, and Mennicke in \cite[Theorem 3.3]{ElstrodtGrunewaldMennicke98} for the Laplace spectrum and $1$-length spectrum of hyperbolic $3$-manifolds.  Elstrodt, Grunewald, and Mennicke then asked if it is possible to prove the same result when the finite set is replaced with an infinite set of sufficiently small density in some suitable sense.

In this paper we give a positive answer to the question of Elstrodt, Grunewald, and Mennicke, for the Laplace spectrum, representation spectrum, $1$-length spectrum, and  length spectrum.
On the way, we also answer a question of Bhagwat and Rajan \cite{BhagwatRajan11} and show that the Laplace spectrum of any compact hyperbolic manifold is completely determined by its length spectrum (cf. \cite{Dietmar89} for a similar result for the spectrum of the Laplacian on forms). The second problem of whether the Laplace spectrum determines the multiplicities in the length spectrum remains open.

\begin{rem}
The results of Bhagwat and Rajan can be thought of as an analogue of the strong multiplicity one theorem for cusp forms. This theorem states that if $f$ and $g$ are two Hecke new-forms for which the eigenvalues of the Hecke operators are equal at all but finitely many primes, then they are equal at all primes, and $f=g$; see \cite[p. 125]{Lang76}. (The analogy is only with the first part of the theorem, as there are examples of iso-spectral but not isometric hyperbolic manifolds; cf. \cite{Sunada85,Vigneras80}.)
Continuing with this analogy, our result can be compared to Ramakrishnan's refinement \cite{Ramakrishnan94}, stating that the finite set of primes in the strong multiplicity one theorem can be replaced with an infinite set, as long as it has Dirichlet density less than $1/8$.
\end{rem}

In order to describe our results we need to introduce some more notation.
For any uniform lattice $\Gamma\subset G$ and any $\pi\in \hat{G}$ we denote by $\m_\Gamma(\pi)$ the multiplicity of $\pi$ in $L^2(\Gamma\bs G)$.
For every $\ell\in (0,\infty)$ we denote by $\m_{\Gamma}(\ell)$ (respectively $\m^o_{\Gamma}(\ell)$) the number of closed geodesics (respectively primitive closed geodesics) of length $\ell$. We also denote by $\ls_\Gamma$ (respectively $\pls_\Gamma$) the set of lengths of (primitive) closed geodesics in $\calM=\Gamma\bs \calH$.
Let $G=NAK$ be an Iwasawa decomposition of $G$ and let $M$ denote the centralizer of $A$ in $K$. For any $\sigma\in \hat{M}$ let $\pi_{\sigma,\nu},\;\nu\in i\bbR$ and $\pi_{\sigma,\nu},\;\nu\in (0,\rho)$ denote the unitary principle and complementary series in $\hat{G}$ where $\rho=\rho_G$ denotes half the sum of the positive roots for $(G,A)$ (see section \ref{s:BasicStructure} for more details).

For any two lattices $\Gamma_1,\Gamma_2\subseteq G$ and every $\sigma\in \hat{M}$ we define the following spectral density function to measure the difference between the (principal part) of the representation spectrum.
\begin{equation}\label{e:Dsigma}
\D_\sigma(\Gamma_1,\Gamma_2;T)=\sum_{\nu\in i\bbR,\;|\nu|\leq T}|\m_{\Gamma_1}(\pi_{\sigma,\nu})-\m_{\Gamma_2}(\pi_{\sigma,\nu})|.
\end{equation}
We think of the asymptotic growth rate of these functions as $T\to\infty$ as describing the ``density" of places for which the multiplicities are different.
We note that this captures more information as it also takes into account by how much the multiplicities differ.
Using this notion we prove a refinement of \cite[Theorems 1.1 and 1.2]{BhagwatRajan10} for real rank one groups.

\begin{thm}\label{t:Spec2Spec}
Let $G$ denote a real rank one group and $\Gamma_1,\Gamma_2\subseteq G$ two uniform lattices without torsion.
\begin{enumerate}
\item Let $1\in \hat{M}$ denote the trivial representation. If
$$\lim_{T\to\infty}\frac{\D_1(\Gamma_1,\Gamma_2;T)}{T}=0,$$
then $\Gamma_1$ and $\Gamma_2$ are Laplace equivalent.
\item Let $\mathfrak{B}\subseteq \hat{M}$ be a finite set.
If
$$\lim_{T\to\infty}\frac{\D_\sigma(\Gamma_1,\Gamma_2;T)}{T}=0,\quad \forall\sigma\not\in \mathfrak{B},$$
then $\Gamma_1$ and $\Gamma_2$ are representation equivalent.
\end{enumerate}
\end{thm}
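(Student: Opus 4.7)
The natural approach is via the Selberg trace formula, separated by $\sigma \in \hat{M}$ and then subtracted for the two lattices. For each $\sigma$ and each suitable even Paley--Wiener function $h$ on $\bbR$, the $\sigma$-component of the trace formula for a uniform torsion-free lattice $\Gamma$ has the shape
\begin{equation*}
\sum_{\nu}\m_\Gamma(\pi_{\sigma,\nu})\, h(\nu)
= \vol(\Gamma\bs\calH)\,\calP_\sigma(h)
+ \sum_{[\gamma]} W_\sigma([\gamma])\,\hat h(\ell_\gamma),
\end{equation*}
where the geometric sum runs over hyperbolic conjugacy classes $[\gamma]$ in $\Gamma$, $\calP_\sigma$ is a Plancherel measure, and $W_\sigma([\gamma])$ is a $\sigma$-holonomy factor that depends on the complex length of $\gamma$. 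Setting $a_\nu^\sigma := \m_{\Gamma_1}(\pi_{\sigma,\nu})-\m_{\Gamma_2}(\pi_{\sigma,\nu})$ and subtracting the two trace formulas produces the difference identity
\begin{equation*}
\sum_\nu a_\nu^\sigma\, h(\nu)
= \bigl(\vol(\Gamma_1\bs\calH)-\vol(\Gamma_2\bs\calH)\bigr)\calP_\sigma(h)
+ \sum_{[\gamma]} \Delta W_\sigma([\gamma])\,\hat h(\ell_\gamma),
\end{equation*}
where $\Delta W_\sigma$ is the signed difference of holonomy weights between $\Gamma_1$ and $\Gamma_2$.

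My first step would be to show the volumes agree. By Weyl's law the unsigned spectral count below $T$ grows polynomially in $T$ with leading coefficient proportional to $\vol(\Gamma\bs\calH)$; combined with the hypothesis $\D_\sigma(T)=o(T)$, any volume mismatch would force the left-hand side of the difference identity (for a fixed positive $h$) to grow at least polynomially, a contradiction. Hence $\vol(\Gamma_1\bs\calH)=\vol(\Gamma_2\bs\calH)$ and the Plancherel term drops out of the difference identity.

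The heart of the proof is a Paley--Wiener argument in which one tests the reduced difference identity against a family $\{h_R\}$ of even Paley--Wiener functions whose Fourier transforms are supported in $[-R,R]$ and concentrate near a prescribed length $\ell_0$. On the geometric side only the finitely many closed geodesics of length $\leq R$ contribute, so the right-hand side can be made to isolate $\Delta W_\sigma$ at $\ell_0$. On the spectral side, the rapid decay of $h_R$ together with Abel summation and the density bound $\sum_{|\nu|\leq T}|a_\nu^\sigma|=o(T)$ drives the sum to zero in a suitable limit. Assuming some $\Delta W_\sigma$ were nonzero leads to a contradiction, so $\Delta W_\sigma\equiv 0$ for every $\sigma\notin\mathfrak B$; the two lattices are therefore $\sigma$-complex-length equivalent for such $\sigma$. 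Reinserting this vanishing into the difference identity forces $a_\nu^\sigma\equiv 0$ for all $\sigma\notin\mathfrak B$, which yields part~(1) via the case $\sigma=1$ and most of part~(2). For the residual $\sigma\in\mathfrak B$, the observation that any non-tempered (discrete or exceptional series) representation restricts to only finitely many $M$-types means multiplicities already determined for $\sigma\notin\mathfrak B$ also determine them for $\sigma\in\mathfrak B$.

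The main obstacle, in my view, is quantitative: the threshold $\D_\sigma(T)=o(T)$ sits exactly one power of $T$ below the Weyl envelope, so the family $h_R$ must be sharp enough that the sublinear density cancels the leading Weyl contribution on the spectral side while $\hat h_R$ remains concentrated near $\ell_0$ on the geometric side. A secondary technical subtlety is that the spectral side of the difference identity also involves the finitely many complementary-series parameters $\nu\in(0,\rho)$, which are not directly controlled by the hypothesis on $\nu\in i\bbR$; these must be accommodated either by choosing $h_R$ to vanish at these points or by a separate bound exploiting that only finitely many such $\nu$ can occur for either lattice.
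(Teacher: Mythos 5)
Your first two stages --- equality of volumes via Weyl's law, then a Paley--Wiener argument with test functions concentrated near a prescribed length $\ell_0$ to deduce $\sigma$-length equivalence, then reinsertion to deduce $\m_{\Gamma_1}(\pi_{\sigma,\nu})=\m_{\Gamma_2}(\pi_{\sigma,\nu})$ for every $\sigma\notin\mathfrak{B}$ --- are essentially the paper's Propositions \ref{p:spec2length} and \ref{p:length2spec} and Theorem \ref{t:Spec2Length2Spec}, and they do yield part (1) by taking $\sigma=1$. (Two technical points you elide but which are handleable: the trace formula's spectral side is really $\m_{\Gamma,\sigma}(\nu)$, which differs from $\m_\Gamma(\pi_{\sigma,\nu})$ only at finitely many $\nu$; and for unramified $\sigma$, which occur for $\SO_0(2m+1,1)$, an even test function only recovers $L_{\Gamma,\sigma}+L_{\Gamma,w\sigma}$, so one also needs the odd-test-function identity to separate $L_{\Gamma,\sigma}$ from $L_{\Gamma,w\sigma}$.)

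The genuine gap is your last step for part (2). The claim that ``multiplicities already determined for $\sigma\notin\mathfrak{B}$ also determine them for $\sigma\in\mathfrak{B}$'' because non-tempered representations have finitely many $M$-types does not work: the principal series $\pi_{\sigma,\nu}$ with $\sigma\in\mathfrak{B}$ are representations genuinely distinct from those with $\sigma\notin\mathfrak{B}$, and no restriction-of-$K$-types (or $M$-types) argument expresses their multiplicities in terms of the others. The missing idea is to pass back through the \emph{geometric} side: having established $L_{\Gamma_1,\sigma}=L_{\Gamma_2,\sigma}$ for all $\sigma\notin\mathfrak{B}$, one fixes a length $\ell_0$ and a holonomy class $[m_0]$ and constructs a smooth class function $F$ on $M$, supported in a small neighborhood of $[m_0]$ with $F(m_0)=1$, whose Fourier coefficients $\hat F(\sigma)$ vanish for the finitely many $\sigma\in\mathfrak{B}$. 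Expanding $F=\sum_\sigma\hat F(\sigma)\overline{\chi_\sigma}$ and pairing with the $\sigma$-length spectra (which agree for every $\sigma$ appearing with nonzero coefficient) recovers the weighted count of closed geodesics with data $(\ell_0,[m_0])$, hence the full complex length spectrum; the complex length spectrum then determines the representation spectrum. Without this (or an equivalent device exploiting the finiteness of $\mathfrak{B}$ on the group $M$ rather than on $\hat G$), part (2) does not follow from what you have proved.
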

\begin{rem}
The finite set $\mathfrak{B}$ above can also be replaced by an infinite set satisfying a certain sparsity condition. Moreover, when $M=\SO(2),\SO(3)$ or $\SU(2)$ we can naturally identify $\hat{M}$ with $\bbN$ and this condition is then essentially that $\mathfrak{B}$ is of density zero (see section \ref{s:FurtherRefinement}).
\end{rem}

For any two lattices $\Gamma_1,\Gamma_2\subseteq G$ we also define a length density function
\begin{equation}\label{e:Dls}\DL(\Gamma_1,\Gamma_2;T)=\sum_{\ell\leq T}|\m_{\Gamma_1}^o(\ell)-\m_{\Gamma_2}^o(\ell)|,\end{equation}
where the sum is over $\ell\in \pls_{\Gamma_1}\cup\pls_{\Gamma_2}$.
Our first results on the length spectrum is of a slightly different nature than the results in \cite{BhagwatRajan10,BhagwatRajan11}, in the sense that we start with (partial data) on the length spectrum and retrieve the Laplace spectrum.
\begin{thm}\label{t:Length2Laplace}
Let $G$ denote a real rank one group and $\Gamma_1,\Gamma_2\subseteq G$ two uniform lattices without torsion.
If $\mathrm{D}_{\rm{L}}(\Gamma_1,\Gamma_2;T)\lesssim e^{\alpha T}$
with $\alpha<\rho_G$, then $\Gamma_1$ and $\Gamma_2$ are Laplace equivalent.
\end{thm}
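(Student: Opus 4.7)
The plan is to pass from the length spectrum data to the Laplace spectrum via the \emph{spherical Selberg zeta function} $Z_\Gamma(s)$ attached to the trivial element $1\in \hat M$. Recall that $Z_\Gamma$ is a meromorphic function of $s\in \bbC$ whose nontrivial zeros in the closed critical strip $0\le \mathrm{Re}(s)\le 2\rho_G$, counted with multiplicity, coincide with the Laplace spectrum of $\calM=\Gamma\bs\calH$: eigenvalues $\lambda\ge \rho_G^2$ give paired zeros on the critical line $\mathrm{Re}(s)=\rho_G$ (through the parametrization $\lambda=s(2\rho_G-s)$), while each small eigenvalue $\lambda<\rho_G^2$ contributes one zero in $(\rho_G,2\rho_G)$ and its mirror in $(0,\rho_G)$. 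The eigenvalue $\lambda=0$ appears at $s=2\rho_G$. Laplace equivalence is therefore equivalent to $Z_{\Gamma_1}$ and $Z_{\Gamma_2}$ having identical zero sets with multiplicity in this strip.

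By the Gangolli product, the logarithmic derivative admits the Dirichlet-type expansion
\[
\frac{Z_\Gamma'(s)}{Z_\Gamma(s)}\;=\;\sum_{\ell_0\in\pls_\Gamma}m^o_\Gamma(\ell_0)\,\ell_0\sum_{n\ge 1}\frac{e^{-sn\ell_0}}{F(n\ell_0)},
\]
absolutely convergent in $\{\mathrm{Re}(s)>2\rho_G\}$, where $F(t)$ is a product of elementary factors of the form $1-e^{-jt}$ and is bounded and bounded away from $0$ for $t\ge \min\{\ell_{\min}(\Gamma_1),\ell_{\min}(\Gamma_2)\}>0$. Setting $f(\ell_0):=m^o_{\Gamma_1}(\ell_0)-m^o_{\Gamma_2}(\ell_0)$, the key step is to extend the identity by a \emph{difference argument}: the function
\[
\Delta(s)\;:=\;\frac{Z_{\Gamma_1}'(s)}{Z_{\Gamma_1}(s)}-\frac{Z_{\Gamma_2}'(s)}{Z_{\Gamma_2}(s)}\;=\;\sum_{\ell_0}f(\ell_0)\,\ell_0\sum_{n\ge 1}\frac{e^{-sn\ell_0}}{F(n\ell_0)}
\]
is dominated up to a multiplicative constant by $\sum_{\ell_0}|f(\ell_0)|\,\ell_0\, e^{-\mathrm{Re}(s)\ell_0}$. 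Applying partial summation to the hypothesis $\DL(\Gamma_1,\Gamma_2;T)\lesssim e^{\alpha T}$ shows this bound converges throughout $\{\mathrm{Re}(s)>\alpha\}$, so $\Delta$ is holomorphic on this much larger half-plane, even though neither individual logarithmic derivative is.

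The poles of each $Z'_{\Gamma_i}/Z_{\Gamma_i}$ are simple and located exactly at the zeros of $Z_{\Gamma_i}$, with residues equal to the zero orders. Holomorphy of $\Delta$ on $\{\mathrm{Re}(s)>\alpha\}$ therefore forces the zero sets of $Z_{\Gamma_1}$ and $Z_{\Gamma_2}$, with multiplicity, to coincide on this region. The hypothesis $\alpha<\rho_G$ is precisely what is required to place the critical line $\mathrm{Re}(s)=\rho_G$ together with the half-interval $[\rho_G,2\rho_G]$ inside this region; then every Laplace eigenvalue of either lattice is detected with matching multiplicity (the large eigenvalues $\lambda\ge\rho_G^2$ on $\mathrm{Re}(s)=\rho_G$, each small eigenvalue via its zero in $(\rho_G,2\rho_G)$, and $\lambda=0$ at $s=2\rho_G$), yielding Laplace equivalence. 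The substantive technical obstacle is the opening step: obtaining the Dirichlet-series form of $Z'/Z$ together with the precise zero/eigenvalue correspondence uniformly across all rank-one groups (real, complex, quaternionic, octonionic hyperbolic), which requires invoking Gangolli's construction and the explicit form of the Plancherel density in each case. Once this structural input is in place, Step 2 is a one-line partial-summation estimate and Step 3 is a standard residue argument.
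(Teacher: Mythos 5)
Your strategy (compare the two logarithmic derivatives, use the hypothesis to continue the difference to $\mathrm{Re}(s)>\alpha$, and read off matching residues on the critical line) is in spirit the same separation-of-exponential-weights argument the paper runs through the trace formula, but it breaks at the very step you flag as "structural input." For every rank one group other than $\SO_0(2,1)$, the logarithmic derivative of Gangolli's Selberg zeta function attached to $\sigma=1$ is \emph{not} of the form $\sum_{\ell_0}\m^o_\Gamma(\ell_0)\,\ell_0\sum_n e^{-sn\ell_0}/F(n\ell_0)$ with $F$ a function of the length alone: the Euler product runs over conjugacy classes and each term carries the Weyl discriminant $D(\gamma)=e^{\rho\ell_\gamma}\bigl|\det\bigl((\Ad(m_\gamma a_{\ell_\gamma})^{-1}-I)|_{\frakn}\bigr)\bigr|$, which depends on the holonomy class $m_\gamma$ and not only on $\ell_\gamma$ (already for hyperbolic $3$-manifolds, $D(\gamma)=e^{\ell}(1-2e^{-\ell}\cos\theta_\gamma+e^{-2\ell})$). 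Consequently the coefficient of $e^{-s\ell}$ in $\Delta(s)$ is a difference of sums of $1/D(\gamma)$ over classes of length $\ell$ in $\Gamma_1$ and $\Gamma_2$, and the hypothesis $\DL(\Gamma_1,\Gamma_2;T)\lesssim e^{\alpha T}$, which only controls the difference of the \emph{numbers} of such classes, does not control this quantity. (The crude estimate $1/D(\gamma)=e^{-\rho\ell}(1+O(e^{-\ell}))$ only continues $\Delta$ to roughly $\mathrm{Re}(s)>\max(\alpha,2\rho-1)$, which misses the critical line as soon as $\rho>1$.) This is exactly the difficulty the paper isolates at the start of Section \ref{s:Alternating} ("the relation between the $\sigma$-length spectrum and the length spectrum is not that straightforward, even for trivial $\sigma$") and resolves by expanding $D(\gamma)$ as a finite sum $\psi(\ell_\gamma)\sum_q(-1)^q(e^{(m-q)\ell_\gamma}+e^{(q-m)\ell_\gamma})\chi_{\eta_q}(m_\gamma)$ of $M$-characters with distinct exponential weights (Proposition \ref{p:Discriminant}).

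A repairable variant of your idea would use the Ruelle-type zeta function $\prod_{\gamma_0}(1-e^{-s\ell_{\gamma_0}})$, whose logarithmic derivative genuinely is a Dirichlet series in the lengths, so your partial-summation step does give holomorphy of the difference on $\mathrm{Re}(s)>\alpha$. But then your residue step fails as stated: the zeros and poles of the Ruelle zeta function in that half-plane are not only Laplace parameters; by the factorization into shifted Selberg zeta functions for the various $\eta_q$ (Deitmar's result, equivalently the paper's alternating trace formula), they include spectral parameters of the non-spherical representations $\pi_{\eta_q,\nu}$, $q\ge 1$, sitting on lines $\mathrm{Re}(s)=\rho-q$ together with finitely many real exceptional points that can lie to the right of those lines. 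One must first show these exceptional contributions cancel and then peel off the $q=0$ layer, which lies on the rightmost line $\mathrm{Re}(s)=\rho$ --- this is precisely the content of the paper's Theorem \ref{t:Length2Spec} and the bookkeeping with the function $C(x)$ there. So the missing ingredient is not the Plancherel density but the character expansion of the Weyl discriminant and the resulting separation of the spherical part from the other $M$-types.
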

\begin{cor*}
If two compact hyperbolic manifolds are length equivalent then they are Laplace equivalent.
\end{cor*}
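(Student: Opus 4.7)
The strategy is to reduce to Theorem~\ref{t:Length2Laplace} by showing that length equivalence implies equality of the \emph{primitive} length multiplicities, so that $\DL(\Gamma_1,\Gamma_2;T)\equiv 0$ and the hypothesis of the theorem is trivially satisfied (any $\alpha<\rho_G$ works).

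The key observation is the elementary relation between closed and primitive closed geodesics:
\begin{equation*}
\m_\Gamma(\ell)=\sum_{k\geq 1,\;\ell/k\in\pls_\Gamma}\m^o_\Gamma(\ell/k),
\end{equation*}
which expresses a closed geodesic of length $\ell$ as the $k$-fold iterate of a primitive one of length $\ell/k$. This identity allows us to recover $\m^o_\Gamma$ from $\m_\Gamma$ by induction on length. Concretely, set $L=\pls_{\Gamma_1}\cup\pls_{\Gamma_2}$, a discrete subset of $(0,\infty)$, and enumerate its elements as $\ell_1<\ell_2<\cdots$. For the minimal length $\ell_1$, every closed geodesic of length $\ell_1$ is automatically primitive, so $\m^o_{\Gamma_i}(\ell_1)=\m_{\Gamma_i}(\ell_1)$ for $i=1,2$, and length equivalence gives $\m^o_{\Gamma_1}(\ell_1)=\m^o_{\Gamma_2}(\ell_1)$. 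Inductively, assume the primitive multiplicities agree for all elements of $L$ smaller than $\ell_n$; the relation above yields
\begin{equation*}
\m^o_{\Gamma_i}(\ell_n)=\m_{\Gamma_i}(\ell_n)-\sum_{k\geq 2,\;\ell_n/k\in L}\m^o_{\Gamma_i}(\ell_n/k),
\end{equation*}
and the right-hand side is the same for $i=1$ and $i=2$ by length equivalence together with the inductive hypothesis.

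Having established $\m^o_{\Gamma_1}(\ell)=\m^o_{\Gamma_2}(\ell)$ for every $\ell\in L$ (and hence for all $\ell>0$), the length density function defined in \eqref{e:Dls} satisfies $\DL(\Gamma_1,\Gamma_2;T)=0$ for all $T$, which in particular obeys $\DL(\Gamma_1,\Gamma_2;T)\lesssim e^{\alpha T}$ for any $\alpha<\rho_G$. Theorem~\ref{t:Length2Laplace} then applies and delivers Laplace equivalence of $\Gamma_1$ and $\Gamma_2$. The only nontrivial content is the passage from the full length spectrum to the primitive length spectrum; this is a purely combinatorial argument, and once it is in place the analytic heart of the corollary is entirely contained in Theorem~\ref{t:Length2Laplace}.
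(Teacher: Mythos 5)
Your proposal is correct and follows exactly the route the paper intends: the corollary is stated as an immediate consequence of Theorem~\ref{t:Length2Laplace}, the only (implicit) step being that the full length spectrum determines the primitive length spectrum, which your inductive unwinding of $\m_\Gamma(\ell)=\sum_{k}\m^o_\Gamma(\ell/k)$ over the discrete set $\pls_{\Gamma_1}\cup\pls_{\Gamma_2}$ supplies, giving $\DL(\Gamma_1,\Gamma_2;T)\equiv 0$. Your write-up simply makes explicit the combinatorial reduction the paper leaves to the reader.
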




For hyperbolic surfaces the Laplace spectrum determines the length spectrum, hence, for surfaces the threshold in Theorem \ref{t:Length2Laplace} also implies that the two lattices are length equivalent.
\begin{rem}
It is interesting to compare this to the result of Buser \cite[Theorem 10.1.4]{Buser92} showing that there is a constant $c(g,\epsilon)$ such that if two hyperbolic surfaces of genus $g$ and injectivity radius $\geq \epsilon$ have the same multiplicities for all lengths $\leq c(g,\epsilon)$, then they are length equivalent.
\end{rem}

In higher dimensions, we do not know if the Laplace spectrum determines the length spectrum.
Nevertheless, with the exception of the odd dimensional real hyperbolic spaces, if we impose a smaller threshold for the growth rate we are able to recover the length spectrum directly. Specifically, for each of the rank one groups we define the threshold
\begin{equation}\label{e:threshold}\alpha_0(G)=\left\lbrace\begin{array}{ll}
0 & G=\SO_0(2n+1,1),\;n\in \bbN\\
1/2 & G=\SO_0(2n+2,1),\;n\in \bbN\\
1 & \mbox{ otherwise}
\end{array}\right.\end{equation}
With this threshold we can prove a refinement of \cite[Theorem 1]{BhagwatRajan11}.
\begin{thm}\label{t:Length2Length}
Let $G$ denote a real rank one group and $\Gamma_1,\Gamma_2\subseteq G$ two uniform lattices without torsion.
If $\DL(\Gamma_1,\Gamma_2;T)\lesssim e^{\alpha T}$
for some $\alpha<\alpha_0(G)$, then $\Gamma_1$ and $\Gamma_2$ are length equivalent.
\end{thm}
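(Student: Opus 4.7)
The strategy is via the Selberg--Gangolli trace formula applied one $M$-type at a time, in order to reduce length equivalence to representation equivalence (which is equivalent to complex length equivalence, hence implies length equivalence). Concretely, the plan is to upgrade the hypothesis on $\DL$ to the bound $D_\sigma(\Gamma_1,\Gamma_2;T)=o(T)$ for every $\sigma\in\hat M$ and then invoke Theorem \ref{t:Spec2Spec}(2). Since $\alpha_0(G)\le\rho_G$ in every rank-one case, the case $\sigma=1$ is already handled by Theorem \ref{t:Length2Laplace}, so the real content lies in pushing the argument to all non-trivial $\sigma$.

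For a general $\sigma$, I would write the $\sigma$-isotypic Selberg trace formula
\begin{equation*}
\sum_\nu m_\Gamma(\pi_{\sigma,\nu})\tilde h(\nu)=\mathrm{Id}(h)+\sum_{[\gamma_0]\text{ prim},\ k\ge 1}\frac{\ell(\gamma_0)\,\chi_\sigma(\theta_{\gamma_0}^k)}{|D(\gamma_0^k)|}h(k\ell(\gamma_0)),
\end{equation*}
and take the $\Gamma_1-\Gamma_2$ difference. Choosing a test function $h=h_T$ whose spherical Fourier transform $\tilde h$ concentrates at spectral parameter $T$, with Paley--Wiener support in lengths of order $T$, and bounding the holonomy character by the crude estimate $|\chi_\sigma|\le\dim\sigma$, reduces the geometric side of the difference to a quantity controlled by $\DL(T)$. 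The exponential profile of the resulting bound is governed by the Harish-Chandra $c$-function asymptotic $|c_\sigma(\nu)|^{-2}$ in each rank-one family: polynomial for odd-dimensional real hyperbolic space (giving $\alpha_0=0$), a $\tanh$-type factor for even-dimensional real hyperbolic space (giving $\alpha_0=1/2$), and a full exponential in the complex, quaternionic, and octonionic cases (giving $\alpha_0=1$); these are exactly the four cases of \eqref{e:threshold}. Under the hypothesis $\DL(T)\lesssim e^{\alpha T}$ with $\alpha<\alpha_0(G)$, one then obtains $D_\sigma(\Gamma_1,\Gamma_2;T)=o(T)$ for every $\sigma$.

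Applying Theorem \ref{t:Spec2Spec}(2), together with its infinite-$\mathfrak B$ refinement noted in the following remark so that all of $\hat M$ can be controlled simultaneously, yields representation equivalence, which is equivalent to complex length equivalence by the correspondence recalled in the introduction, and in particular implies length equivalence. The main obstacles are twofold: first, the geometric side of the $\sigma$-trace formula genuinely depends on holonomies and not just on lengths, so the crude bound $|\chi_\sigma|\le\dim\sigma$ introduces $\dim\sigma$-factors whose bookkeeping must be done carefully enough to fit within the sparsity refinement of Theorem \ref{t:Spec2Spec}(2); second, realizing the sharp value of $\alpha_0(G)$ requires a case-by-case Plancherel/$c$-function analysis, with the even-dimensional real hyperbolic case being the most delicate because of the $\tanh$ factor responsible for the intermediate value $\alpha_0=1/2$.
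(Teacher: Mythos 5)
Your plan founders at its first substantive step: the claim that, after bounding $|\chi_\sigma|\le\dim\sigma$, the geometric side of the difference of the $\sigma$-trace formulas is ``controlled by $\DL(T)$''. The geometric side attached to $\sigma$ is built from $L_{\Gamma,\sigma}(\ell)$, which weights each class $[\gamma]$ by $\overline{\chi_\sigma(m_\gamma)}\,\ell/(2j(\gamma)D(\gamma))$; both the character value and the Weyl discriminant $D(\gamma)$ depend on the holonomy $m_\gamma$, not only on $\ell_\gamma$. The hypothesis $\DL(\Gamma_1,\Gamma_2;T)\lesssim e^{\alpha T}$ carries no holonomy information whatsoever: one could have $\m^o_{\Gamma_1}(\ell)=\m^o_{\Gamma_2}(\ell)$ for every $\ell$ (so $\DL\equiv 0$) while the holonomy classes, and hence $\Delta L_\sigma(\ell)$, differ. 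A triangle-inequality bound on $|\Delta L_\sigma(\ell)|$ that forgoes cancellation between the two lattices costs the full Prime Geodesic Theorem count $e^{2\rho T}$, far above any usable threshold. This is exactly the obstruction flagged at the start of Section \ref{s:Alternating} (the relation between the $\sigma$-length spectrum and the length spectrum is not straightforward even for trivial $\sigma$), and if your route did work it would show that the length spectrum determines the representation spectrum up to the stated threshold --- strictly more than the theorem asserts, and a question the paper explicitly leaves open.

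The paper's actual argument avoids individual $\sigma$-trace formulas. It expands the Weyl discriminant as a finite alternating sum $D(\gamma)=\psi(\ell_\gamma)\sum_{q=0}^m(-1)^q(e^{(m-q)\ell_\gamma}+e^{(q-m)\ell_\gamma})\chi_{\eta_q}(m_\gamma)$ over virtual $M$-representations $\eta_0,\dots,\eta_m$ (Proposition \ref{p:Discriminant}), which yields the alternating trace formula of Theorem \ref{t:AlternatingTrace}: its geometric side is $\tfrac12\sum_\ell\ell\,\m^o_\Gamma(\ell)\sum_j g(j\ell)/\psi(j\ell)$, genuinely holonomy-free, and its spectral side involves only the finitely many $\m_{\Gamma,\eta_q}$. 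Theorem \ref{t:Length2Spec} then isolates each $\m_{\Gamma,\eta_q}$ individually by exploiting the distinct exponential growth rates of the shifted test functions $\hat{g}_q$; the threshold $\alpha_0(G)=\rho-m$ comes from the weight $\psi(\ell)\asymp e^{\alpha_0\ell}$ in this factorization, not from Plancherel or $c$-function asymptotics as you suggest. When $\alpha<\alpha_0$ all $q=0,\dots,m$ are recovered, the entire spectral side of the difference vanishes, and choosing $g$ supported near the smallest $\ell$ with $\Delta\m^o(\ell)\ne 0$ forces $\Delta\m^o\equiv 0$. Note finally that the conclusion is only length equivalence, not representation equivalence, precisely because only the $\eta_q$ --- not all of $\hat{M}$ --- are recovered.
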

For odd dimensional real hyperbolic space the threshold $\alpha_0=0$ so the statement is empty.
In this case, even assuming that $\m_{\Gamma_1}^o(\ell)=\m_{\Gamma_2}^o(\ell)$ for all but finitely many values of $\ell$ we were not able to prove that $\Gamma_1$ and $\Gamma_2$ are length equivalent. However, from Theorem \ref{t:Length2Laplace} we know that they must be
Laplace equivalent and hence must have the same length set and the same volume. Using this fact we can show
\begin{thm}\label{t:Length2LengthOdd}
Let $\Gamma_1,\Gamma_2\subseteq \SO_0(2n+1,1)$ denote two uniform lattices without torsion. If $\m^o_{\Gamma_1}(\ell)=\m^o_{\Gamma_2}(\ell)$ for all  $\ell\not\in \{\ell_1,\ldots\ell_k\}$ then
$\ell_1,\ldots,\ell_k$ are rationally dependent. In particular, if $k=1$ then $\Gamma_1$ and $\Gamma_2$ are length equivalent.
\end{thm}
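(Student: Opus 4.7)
The plan is to bootstrap from Laplace equivalence (guaranteed by Theorem \ref{t:Length2Laplace}) to the primitive length multiplicity using the Selberg trace formula and the rational independence of the exceptional lengths. Since the exceptional set is finite, $\DL(\Gamma_1,\Gamma_2;T)=O(1)\lesssim e^{\alpha T}$ for every $\alpha>0$; choosing $\alpha<\rho_G$, Theorem \ref{t:Length2Laplace} yields that $\Gamma_1,\Gamma_2$ are Laplace equivalent, and hence have the same covolume and the same length set $\ls_{\Gamma_1}=\ls_{\Gamma_2}$, so that every $\ell_i$ lies in the common length set.

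Arguing by contradiction, I would suppose the $\ell_i$ are rationally independent over $\bbQ$ and that some $\delta_i:=\m^o_{\Gamma_1}(\ell_i)-\m^o_{\Gamma_2}(\ell_i)$ is nonzero. Feeding a common spherical test function $h$ into the Selberg trace formulas for both lattices, Laplace equivalence matches the spectral sides and equal covolumes cancel the identity contributions, yielding a geometric identity
\[
\sum_{m\ge 1}\sum_{\substack{\gamma_0\text{ prim}\\ m\ell(\gamma_0)=L}}\bigl[w_{\Gamma_1}(\gamma_0,m)-w_{\Gamma_2}(\gamma_0,m)\bigr]\,\hat h(L)=0
\]
valid for every admissible $L>0$, with Selberg weights $w_\Gamma(\gamma_0,m)>0$ depending on $\ell(\gamma_0)$, $m$, and the holonomy of $\gamma_0$. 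Under rational independence, distinct pairs $(i,m)$ produce distinct values $m\ell_i$, and no relation $m\ell_i=m'\ell_j$ is possible unless $(i,m)=(j,m')$. The plan is then to localize the identity at $L=m\ell_i$, peel off the iterate contributions from primitives at the other lengths $\ell_j$ (which cleanly decouple by rational independence), and invert the resulting triangular system to conclude $\delta_i=0$ for all $i$, contradicting our assumption.

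The main obstacle will be the noise from primitives at lengths $\ell\notin\{\ell_j\}$: by hypothesis their counts agree between $\Gamma_1$ and $\Gamma_2$, but their holonomies, and hence the individual Selberg weights, need not, so iterate contributions from such primitives can in principle land on the targeted lengths $m\ell_i$. Handling these holonomy-driven discrepancies --- likely by an asymptotic analysis of the iterate weights or by a bootstrap using the trace formula with non-spherical $\sigma\in\hat M$ --- is where the real work lies. For $k=1$, the single positive length $\ell_1$ is vacuously rationally independent, so the contrapositive of the above immediately gives $\delta_1=0$, i.e.\ $\Gamma_1$ and $\Gamma_2$ are length equivalent.
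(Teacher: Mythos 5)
The reduction to Laplace equivalence and the contrapositive framing are fine, but the heart of your argument does not go through. The geometric side of the spherical Selberg trace formula weights each class $[\gamma]$ by $\ell_{\gamma_0}/(j(\gamma)D(\gamma))$, where the Weyl discriminant $D(\gamma)$ depends on the holonomy $m_\gamma\in\SO(2n)$. Matching the spectral sides therefore only yields $L_{\Gamma_1,1}(\ell)=L_{\Gamma_2,1}(\ell)$ for every $\ell$ (this is Theorem \ref{t:Spec2Length2Spec} with $\sigma=1$, and it needs no rational independence: the length set is discrete, so one can localize the test function at a single length in any case). The step you call ``inverting the triangular system'' then has to pass from equality of these holonomy-weighted sums to equality of the raw counts, and it fails already at the first entry of the system: even with no iterate contamination, the equality of $\sum_{\ell_{\gamma_0}=\ell_i}\ell_i/D(\gamma_0)$ for the two lattices does not determine $\Delta\m^o(\ell_i)$, because the primitives of length $\ell_i$ in $\Gamma_1$ and $\Gamma_2$ can carry entirely different holonomies. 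You flag holonomy as an obstacle only for the non-exceptional primitives, but it is equally an obstruction at the exceptional lengths themselves, and rational independence of the $\ell_j$ does nothing to address it; the ``real work'' you defer is the entire content of the proof.

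The paper resolves exactly this with a different device: the alternating trace formula of Section \ref{s:Alternating}, which expands $D(\gamma)$ in characters of $\hat M$ so that the geometric side becomes $\tfrac12\sum_\ell \ell\,\m^o_\Gamma(\ell)\sum_j g(j\ell)$ with no holonomy weights, at the price of a spectral side involving $\m_{\Gamma,\eta_q}$ for $q=0,\dots,m$. Theorem \ref{t:Length2Spec} (applied with $\DL=O(1)$) kills the differences for all $q\le m-1$, leaving only the top representation $\sigma_m$ --- this surviving term is precisely why full length equivalence cannot be concluded in the odd orthogonal case. The rational dependence then falls out of Poisson summation applied to $\sum_{q\ge1} g(q\ell_j)$: the spectral side is purely atomic, so the continuous component of the resulting measure, whose density is proportional to $\sum_j \ell_j\Delta\m^o(\ell_j)$, must vanish. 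In other words, the arithmetic of the $\ell_j$ enters as the condition for a Lebesgue component to cancel, not through a decoupling of iterates as in your plan.
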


We now discuss the sharpness of our thresholds for the density functions.
Regarding Theorems \ref{t:Length2Length} and \ref{t:Length2LengthOdd}, we note that there are no known examples of hyperbolic manifolds that are Laplace equivalent but not representation equivalent. It is thus possible that the correct threshold is actually the same as in Theorem \ref{t:Length2Laplace}.

For Theorem \ref{t:Length2Laplace} we recall the Prime Geodesic Theorem \cite{Gangolli77,Margulis69}, stating that
\[\sum_{\ell\leq T}\m_{\Gamma}^o(\ell) \sim \rm{Li}(e^{2\rho T})\sim  \frac{e^{2\rho T}}{2\rho T}.\]
Our threshold is thus roughly the square root of the trivial bound $\DL(\Gamma_1,\Gamma_2;T)\leq 2\rm{Li}(e^{2\rho T})$.
We note that this is analogous to the threshold in the result of Soundararajan \cite{Soundararajan04} on strong multiplicity one for the Selberg class.
On the other hand, the analogy with the density $1/8$ result for cusp forms may lead to the suspicion that the correct threshold should be of the form $c\rm{Li}(e^{2\rho T})$ with some $c<2$ a sufficiently small constant.
Though we do not know if our threshold of $e^{\rho T}$ is sharp, we show that such a positive density threshold cannot hold. Specifically, borrowing examples from \cite{LeiningerMcReynoldsNeumannReid07} we show
\begin{thm}\label{t:NoPositiveDensity}
For every $\epsilon>0$ there are non iso-spectral $\Gamma_1,\Gamma_2\subset G$ satisfying
\[\limsup_{T\to\infty}\frac{\DL(\Gamma_1,\Gamma_2;T)}{\rm{Li}(e^{2\rho T})}\leq \epsilon.\]
\end{thm}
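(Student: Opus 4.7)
The idea is a \emph{lax Sunada} construction adapted from \cite{LeiningerMcReynoldsNeumannReid07}. Fix a uniform torsion-free lattice $\Gamma_0 \subset G$; by residual finiteness there exist surjections $\varphi \colon \Gamma_0 \twoheadrightarrow Q$ onto arbitrary finite quotients, and given any pair of subgroups $H_1, H_2 \leq Q$ the preimages $\Gamma_i := \varphi^{-1}(H_i)$ are commensurable uniform torsion-free lattices. Standard covering-space theory expresses the primitive length spectrum of $\Gamma_i$ in terms of orbits of cyclic subgroups of $Q$ on $Q/H_i$: for each primitive conjugacy class $[\gamma_0]_{\Gamma_0}$ of length $\ell_0$ with image $g := \varphi(\gamma_0) \in Q$, the primitive geodesics in $\Gamma_i \bs \calH$ lying above the geodesic of $\gamma_0$ correspond bijectively to the orbits of $\langle g \rangle$ on $Q/H_i$, and an orbit of size $k$ contributes a primitive geodesic of length $k\ell_0$.

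The orbit structures of $g$ on $Q/H_1$ and on $Q/H_2$ coincide whenever the Gassmann-type identity $|C \cap H_1| = |C \cap H_2|$ holds for every $Q$-conjugacy class $C$ appearing as a power of the class of $g$. The plan is to choose $(Q,H_1,H_2)$ with three properties: (i) \emph{weak equivalence} of $H_1,H_2$, i.e.\ every class of $Q$ meets $H_1$ iff it meets $H_2$, which forces $\pls_{\Gamma_1} = \pls_{\Gamma_2}$; (ii) at least one class with $|C \cap H_1| \neq |C \cap H_2|$, so that $\Gamma_1,\Gamma_2$ are not iso-spectral; and (iii) the ``bad'' set $\calB \subseteq Q$, consisting of elements some power of whose conjugacy class fails the Gassmann condition, has density $|\calB|/|Q| \leq \epsilon$. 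Such a finite datum is constructed by taking the direct product of a small non-Gassmann lax pair from \cite{LeiningerMcReynoldsNeumannReid07} with a large Gassmann-equivalent Sunada pair in a disjoint factor: the Sunada factor dilutes the discrepancy set to arbitrarily small density while preserving the non-iso-spectral property coming from the first factor.

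With this construction, the difference $|\m^o_{\Gamma_1}(\ell) - \m^o_{\Gamma_2}(\ell)|$ is supported on primitive conjugacy classes of $\Gamma_0$ whose image in $Q$ lies in $\calB$. Combining the prime geodesic theorem for $\Gamma_0$ with Chebotarev-type equidistribution of Frobenius classes in the finite Galois cover $\ker(\varphi) \bs \calH \to \Gamma_0 \bs \calH$ (a standard consequence of the Selberg trace formula applied to class functions on $Q$), one obtains
\begin{align*}
\DL(\Gamma_1,\Gamma_2;T) \lesssim \frac{|\calB|}{|Q|} \cdot \mathrm{Li}(e^{2\rho T}) \leq \epsilon \cdot \mathrm{Li}(e^{2\rho T}),
\end{align*}
which is the desired bound.

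The main obstacle is realizing the finite datum $(Q,H_1,H_2)$ as a quotient of a cocompact torsion-free lattice in each rank-one group $G$. Sufficient flexibility is provided by strong approximation for arithmetic lattices (available in all rank-one cases), which allows one to prescribe products of finite simple quotients through congruence covers, so that the dilution-by-direct-product trick can be carried out uniformly.
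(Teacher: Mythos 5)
There is a genuine gap, and it sits at the displayed inequality $\DL(\Gamma_1,\Gamma_2;T)\lesssim \tfrac{|\mathcal{B}|}{|Q|}\mathrm{Li}(e^{2\rho T})$. First, the density of the bad set is not the right quantity: a single bad Frobenius class can carry a multiplicity discrepancy as large as $[Q:H_1]+[Q:H_2]$, so the implied constant in your $\lesssim$ is not uniform in the construction. What Chebotarev-type equidistribution actually yields is the following: the splitting degrees $d\ge 2$ only involve primitive classes $\frakp$ of $\Gamma_0$ with $\ell_\frakp\le T/2$ and hence contribute $O_Q(\mathrm{Li}(e^{\rho T}))=o(\mathrm{Li}(e^{2\rho T}))$, while the degree-one part gives the bound
$\limsup_T \DL(\Gamma_1,\Gamma_2;T)/\mathrm{Li}(e^{2\rho T})\le \tfrac{1}{|Q|}\sum_{g\in Q}|\chi_{H_1}(g)-\chi_{H_2}(g)|$,
the normalized $L^1$-distance of the permutation characters. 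This quantity is \emph{exactly invariant} under your dilution: for $H_i=H_i^0\times K_i$ with $K_1,K_2$ Gassmann-equivalent one has $\chi_{H_i}(g_0,g_1)=\chi_{H_i^0}(g_0)\,\chi_{K}(g_1)$ with $\chi_{K_1}=\chi_{K_2}=\chi_K\ge 0$, so the $L^1$-distance factors, and the second factor is $\tfrac{1}{|Q_1|}\sum_{g_1}\chi_K(g_1)=\langle \chi_K,1\rangle=1$ by Burnside (the action on $Q_1/K_1$ is transitive). Thus the direct product with a large Sunada pair leaves the relevant density equal to the fixed positive number attached to your initial small non-Gassmann pair, and no choice of the second factor can push it below that. (Your property (iii) also fails as literally stated: every $(g_0,g_1)$ has the power $(g_0^{\mathrm{ord}(g_1)},1)$, whose $Q_1$-component trivially meets $K_1$, so the power-closed bad set contains essentially $\{g_0\ \text{with a bad power}\}\times Q_1$ and has density bounded below independently of $Q_1$.)

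The paper makes $\epsilon$ small in a different way: it keeps a single LMNR pair $B_1=V_p$, $B_2=\{I+pX:X\in R^\bot(\bbF_p)\}$ inside $A=\PSL_2(\calO/p^2\calO)$ and lets $p\to\infty$, so that the character discrepancy itself is $O(1/p)$ on the classes occurring at full density. Moreover it avoids Chebotarev entirely: after computing the splitting types $\calA_{B_j}(\frakp,d)$ explicitly for the five Frobenius types, one observes that in $\m^o_{\Gamma_2}-\m^o_{\Gamma_1}$ all discrepancy terms except one carry the same sign; applying the Prime Geodesic Theorem to $\Gamma_1$ and $\Gamma_2$ separately forces the \emph{signed} sum to be $O(e^{\alpha T})$ with $\alpha<2\rho$, whence the unsigned sum is at most twice the exceptional term, which is bounded by $\tfrac{4}{p+1}\mathrm{Li}(e^{2\rho T})$. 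To salvage your approach you would have to replace ``density of the bad set'' by the $L^1$ character distance and exhibit pairs for which that tends to $0$ --- which is precisely what the $p\to\infty$ family accomplishes.
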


\begin{rem}
We note that the co-volumes of the lattices we use in the proof go to infinity as $\epsilon\to 0$.
It is thus still possible that a positive density threshold can hold under the additional assumption that the volumes are uniformly bounded.
See section \ref{s:Perlis} for a similar phenomenon in the analogous context of arithmetically equivalent number fields.
\end{rem}

For the representation spectrum we suspect that our threshold is not optimal.
We recall that the Weyl law for the principal spectrum is
\[\sum_{|\nu|\leq T} \m_\Gamma(\pi_{\sigma,\nu})\sim C\dim(\sigma)\vol(\Gamma\bs G)T^d,\]
with $d=\dim(G/K)$ and $C$ an explicit constant depending on $G$ (see \cite{MiatelloVargas83}). Consequently, the trivial bound for $\D_{\sigma}(\Gamma_1,\Gamma_2;T)$ is $O(T^d)$ and the correct threshold could very well be
$$\lim_{T\to\infty}\frac{\D_\sigma(\Gamma_1,\Gamma_2;T)}{T^{d}}=0,$$
or even a positive density threshold of the form $D_\sigma(\Gamma_1,\Gamma_2;T)\leq cT^d$ with $c$ a sufficiently small constant.
We note that the first condition implies the two lattices at least have the same co-volume.

We conclude this introduction with a brief outline of the paper. In Section \ref{s:Background} we introduce some notation and recall some basic results on the spectral theory of symmetric spaces. In section \ref{s:Spec} we give the proof of Theorem \ref{t:Spec2Spec}
using the Selberg trace formula. Our proof is similar to the original proof of \cite[Theorem 3.3]{ElstrodtGrunewaldMennicke98};
the new ingredient which allows us to improve on their result is the use of more general test functions in the trace formula (instead of just the heat trace).
In section \ref{s:Alternating} we develop a new trace formula in which the geometric side involves the length spectrum directly.
The price we have to pay is that on the spectral side, in addition to the Laplace spectrum, we have contribution from other representations occurring in $L^2(\Gamma\bs G)$. In section \ref{s:Length} we show that, by using suitable test functions in the trace formula, we can isolate the contribution of each of those representations. This enables us to prove Theorems \ref{t:Length2Laplace},\ref{t:Length2Length}, and \ref{t:Length2LengthOdd}.
Finally, in section \ref{s:Similar} we recall the construction in \cite{LeiningerMcReynoldsNeumannReid07} of lattices with the same length sets and use it to prove Theorem \ref{t:NoPositiveDensity}.

\subsection*{Acknowledgments}
I thank Ben McReynolds and Alan Reid for explaining their construction of spaces with the same length sets.
I also thank Peter Sarnak and Masato Wakayama for clarifying a few points regarding the trace formula.
This work was partially supported by NSF grant DMS-1001640.

\section{Notation and preliminaries}\label{s:Background}
We write $A\lesssim B$ or $A=O(B)$ to indicate that $A\leq cB$
for some constant $c$. We also write $A\asymp B$ to indicate that
$A\lesssim B\lesssim A$. We write $A(T)\sim B(T)$ if $A(T)/B(T)\to 1$  and $A(T)=o(B(T))$ if $A(T)/B(T)\to 0$ as $T\to \infty$.

\subsection{Basic structure on symmetric spaces}\label{s:BasicStructure}
Let $G$ denote a connected semisimple Lie group of real rank one, $K\subset G$ a maximal compact subgroup and $\calH=G/K$ the corresponding symmetric space. That is $G=\SO_0(n+1,1),\;\SU(n+1,1),\;\Sp(n,1)$ or $FII$ and $\calH$ is real, complex, quaternionic, or octonionic hyperbolic space respectively.

Fix an Iwasawa decomposition $G=NAK$ and let $\frakg=\frakn\oplus \fraka\oplus \frakk$ denote the corresponding decomposition of the Lie algebra $\frakg$.
Let $M,M^*\subseteq K$ denote the centralizer and normalizer of $A$ in $K$ respectively.
Let $W=W(G,A)=M^*/M$ denote the baby Weyl group; since we assume that $\dim(\fraka)=1$ then $|W|=2$ and we write $W=\{1,w\}$.
We denote by $\Sigma=\Sigma(G,A)$ the set of restricted roots for the pair $(G,A)$ and by $\Sigma^+$ the set of positive restricted roots, then either $\Sigma^+=\{\alpha\}$ or $\Sigma^+=\{\alpha,2\alpha\}$. Let $\rho=\rho_G$ denote half the sum of the positive roots, that is,
$$\rho=\frac{(\dim(\frakn_1)+2\dim(\frakn_2))\alpha}{2}$$
where $\frakn=\frakn_1\oplus\frakn_2$ is the decomposition into the root spaces of $\alpha_1$ and $\alpha_2$ respectively.
We fix (once and for all) an element $H\in \fraka$ with $\alpha(H)=1$ and for any $t\in \bbR$ we denote by $a_t=\exp(tH)\in A$. We can identify the dual spaces $\fraka^*=\bbR$ and $\fraka_\bbC^*=\bbC$
via $\nu=\nu(H)$. With this identification $\rho=\frac{\dim(\frakn_1)+2\dim(\frakn_2)}{2}$.

\subsection{The unitary dual}
Let $\hat{G},\hat{K}$, and $\hat{M}$ denote the unitary duals of $G,K$, and $M$ respectively.
For any $\sigma\in \hat{M}$ we denote by $\pi_{\sigma,\nu},\;\nu\in i\bbR$ the principal series representations and by $\pi_{\sigma,\nu},\;\nu\in (0,\rho)$ the complementary series representations.

The action of $M^*$ on $M$ (by conjugation) induces an action of the Weyl group $W=\{1,w\}$ on $\hat{M}$. We note that under this action $\pi_{\sigma,i\nu}=\pi_{w\sigma,-i\nu}$ and that these are the only pairs of equivalent principal series representation. We say that a representation $\sigma\in \hat{M}$ is ramified if $\sigma=w\sigma$ and unramified otherwise, and we recall that there are unramified $\sigma\in \hat{M}$ only when 
$G=\SO_0(2m+1,1)$.

We denote by $\hat{G}_c$ the set of equivalence classes of the principle and complementary series representations, and by $\mathfrak{S}=\hat{G}\setminus \hat{G}_c$ (that is, $\mathfrak{S}$ is the set of equivalence classes of discrete series representation, limits of discrete series, and Langland's quotients).


For any $\pi\in \hat{G}$  we denote by $\chi_\pi$ the infinitesimal character of $\pi$.
We denote by $\Omega_G$ the Casimir operators of $G$. Since $G$ is of rank one the character $\chi_\pi$ is uniquely determined by its value on $\Omega_G$.
For $\pi=\pi_{\sigma,\nu}\in G_c$ we have that $\chi_{\sigma,\nu}(\Omega_G)=\nu^2-\rho^2+\chi_\sigma(\Omega_M)$ where $\Omega_M$ denotes the Casimir operator of $M$ (appropriately normalized).
For any $\Lambda\in \bbC$ we denote by $\hat{G}(\Lambda)$  the (finite set) of representations with $\chi_{\pi}(\Omega_G)=\Lambda$ and by $\mathfrak{S}(\Lambda)=\mathfrak{S}\cap \hat{G}(\Lambda)$.

\subsection{Closed geodesics and conjugacy classes}
For any $\gamma\in \Gamma$ we denote by $[\gamma]\in \Gamma^\#$ its conjugacy class. Since $\calM$ is of negative curvature, there is a natural correspondence between conjugacy classes in $\Gamma=\pi_1(\calM)$, free homotopy classes of closed curves in $\calM$, and (oriented) closed geodesics.

We say that an element $\gamma\in \Gamma$ is primitive if it cannot be written as $\gamma=\delta^j$ for some other $\delta\in \Gamma$; note that this only depends on the $\Gamma$-conjugacy class. To any $[\gamma]\in \Gamma^\#$, we define the primitivity index $j(\gamma)$ as the unique $j\in \bbN$ such that $\gamma=\delta^j$ with $\delta\in \Gamma$ primitive. Under the above correspondence, a closed geodesic is primitive if and only if the corresponding conjugacy class is primitive. Moreover, the primitivity index $j(\gamma)$ is the number of times the geodesic wraps around itself.

Any hyperbolic $\gamma\in \Gamma$ is conjugated in $G$ to an element $m_\gamma a_{\ell_\gamma}\in MA^+$ where $A^+=\{a_t|t>0\}$. Here $\ell_\gamma$ is uniquely determined by $[\gamma]$ and $m_\gamma$ is determined up to conjugacy in $M$. The pair $(\ell_\gamma,[m_{\gamma}])$ is then precisely the length and holonomy of the closed geodesic corresponding to $[\gamma]$.

\subsection{The $\sigma$-length and representation spectra}\label{s:sigmaspectra}
To any irreducible representation $\sigma\in \hat{M}$ we attach two function $L_{\Gamma,\sigma}:\bbR^+\to \bbR$ and $\m_{\Gamma,\sigma}:(0,\rho)\cup i\bbR\to \bbN$, we call the $\sigma$-length spectrum and $\sigma$-representation spectrum respectively. These two functions are closely related via the Selberg trace formula.

The $\sigma$-length spectrum is defined by
\[L_{\Gamma,\sigma}(\ell)=\mathop{\sum_{[\gamma]\in \Gamma^\#}}_{\ell_\gamma=
\ell}\frac{\overline{\chi_{\sigma}(m_{\gamma})}\ell}{2j(\gamma)D(\gamma)},\]
where $\chi_\sigma$ is the character of $\sigma$ and
\begin{equation}\label{e:WeylDiscriminant}D(\gamma)=e^{\rho\ell_\gamma}|\det((\Ad(m_\gamma a_{\ell_\gamma})^{-1}-I)_{|\frakn})|,\end{equation}
is the Weyl discriminant. When $\sigma$ is unramified we also define
$$L^\pm_{\Gamma,\sigma}(\ell)=L_{\Gamma,\sigma}(\ell)\pm L_{\Gamma,w\sigma}(\ell).$$
\begin{rem}\label{r:sigmalength}
The definition of the length spectrum of a hyperbolic 3-manifold given in \cite[Definition 3.1]{ElstrodtGrunewaldMennicke98}
coincides with what we call the $1$-length spectrum, that is, the $\sigma$-length spectrum for $\sigma=1$ the trivial representation.
\end{rem}

In order to define the $\sigma$-representation spectrum we fix a virtual representation $\eta=\sum^\oplus_{\tau\in \hat{K}} a_\tau \tau$ with  ($a_\tau\in \bbZ$ almost all zeros) such that $\eta_{|_M}=\sigma$ (respectively $\sigma+w\sigma$ if $\sigma$ is unramified).
Let $\Lambda_{\sigma,\nu}=\chi_{\sigma,\nu}(\Omega_G)$
and let $\mathfrak{S}=\hat{G}\setminus \hat{G}_c$. For $\pi\in \mathfrak{S}$ we denote by $\alpha_{\Gamma}(\pi)$ the corrected multiplicity given by
$$\alpha_{\Gamma}(\pi)=\left\lbrace\begin{array}{ll}
\m_\Gamma(\omega)-\vol(\Gamma\bs G)d_\omega & \pi=\omega \mbox{ in discrete series}\\
\m_\Gamma(\pi)& \mbox{ otherwise}\end{array}\right.,$$
with $d_\omega$ the formal degree of $\omega$.
 The $\sigma$-representation spectrum is given by
\begin{equation}\label{e:SigmaSpec}
\m_{\Gamma,\sigma}(\nu)=\m_{\Gamma}(\pi_{\sigma,\nu})+\sum_{\pi\in \mathfrak{S}(\Lambda_{\sigma,\nu})} \alpha_{\Gamma}(\pi)[\pi_{|_K};\eta],
\end{equation}
where $[\pi_{|_K};\eta]=\sum_\tau a_\tau [\pi_{|_K};\tau]$.
For $\sigma$ unramified we also define $\m_{\Gamma}^\pm(\nu)$ by
\begin{equation}\label{e:SigmaSpecRam}
\m^+_{\Gamma,\sigma}(\nu)=\m_{\Gamma}(\pi_{\sigma,\nu})+ \m_{\Gamma}(\pi_{w\sigma,\nu})+\sum_{\pi\in \mathfrak{S}(\Lambda_{\sigma,\nu})} \m_{\Gamma}(\pi)[\pi_{|_K};\eta].
\end{equation}
and
\begin{equation}
\m^-_{\Gamma,\sigma}(\nu)=\m_{\Gamma}(\pi_{\sigma,\nu})- \m_{\Gamma}(\pi_{w\sigma,\nu})
\end{equation}

We note that by \cite[Theorem 1.2]{Miatello82} the $\sigma$-representation spectrum does not depend on the choice of $\eta$.
Also, since representations $\pi\in \mathfrak{S}$ have a minimal $K$-type (see \cite[Chapter XV]{Knapp86}), then for any fixed $\eta$, there are only finitely many $\pi\in \mathfrak{S}$ for which $[\pi_{|_K};\eta]\neq 0$. In particular, for $\sigma\in \hat{M}$ fixed,
$\m_{\Gamma,\sigma}(\nu)=\m_{\Gamma}(\pi_{\sigma,\nu})$ for all but finitely many values of $\nu$.
Moreover, for $\sigma=1$ trivial, $\m_{\Gamma,1}(\nu)=\m_{\Gamma}(\pi_{1,\nu})$ for all $\nu$.
For any $\sigma\in \hat{M}$ we define the $\sigma$-spectral set as
$$S_{\Gamma,\sigma}=\{\nu\in i\bbR^+\cup (0,\rho)|\;\m_{\Gamma,\sigma}(\nu)\neq 0\}.$$

\subsection{Trace formula attached to $\sigma$}
Building on the Selberg trace formula developed in \cite{Wallach76,Warner79},
Sarnak and Wakayama \cite{SarnakWakayama99} derived a trace formula attached to each irreducible representation $\sigma\in \hat{M}$.
They derived this formula in general for a (possibly) nonuniform lattice. We will write it down only for the simpler case of a uniform lattice without torsion. The derivation in this case is much simpler as there are no contribution from continuous spectrum or unipotent elements and the treatment of the multiplicities of discrete series is straight forward.
\begin{rem}
For the case of a uniform lattice this formula, with a special test function coming from the fundamental solution to the heat equational, was already derived in \cite{Miatello82,MiatelloVargas83}. We note that, in addition to the treatment of non-uniform lattices, another new features in \cite{SarnakWakayama99} which is crucial for our application is the use of general test functions. See also \cite{Dietmar89} for a similar trace formula.
\end{rem}

For $\Gamma\subseteq G$ a uniform lattice without torsion the trace formula corresponding to $\sigma\in \hat{M}$ takes the following form (see \cite[Theorem 2 and Theorem 6.5]{SarnakWakayama99} \footnote{We corrected here a few typos from the formulas in \cite{SarnakWakayama99} and in particular the mistake in \cite[Theorem 6.5 (2)]{SarnakWakayama99} for an odd test function.}:
\begin{itemize}
\item For $\sigma\in \hat{M}$ ramified, for any \textbf{even} $g\in C^\infty_c(\bbR)$,
\begin{eqnarray}\label{e:sigmatrace}
\sum_{\nu_k\in S_{\Gamma,\sigma}} \m_{\Gamma,\sigma}(\nu_k)\hat{g}(i\nu_k)&=& \vol(\Gamma\bs G)\int_{\bbR}\hat{g}(\nu)\mu_{\sigma}(\nu)d\nu\\ \nonumber&+&\sum_{\ell\in \ls_{\Gamma}} g(\ell)L_{\Gamma,\sigma}(\ell)
\end{eqnarray}
where $\hat{g}$ denotes the Fourier transform of $g$ and $\mu_\sigma(\nu)d\nu$ is the Plancherel measure.
\item For $\sigma\in \hat{M}$ unramified, for any \textbf{even} $g\in C^\infty_c(\bbR)$ we have the same formula but with $\m_{\Gamma,\sigma}^+$,
$L^+_{\Gamma,\sigma}$ and $\mu_\sigma^+=2\mu_\sigma$. In addition, for any \textbf{odd} $g\in C^\infty_c(\bbR)$ we have
\begin{eqnarray}\label{e:sigmatraceodd}
\sum_{\nu_k\in \s_{\Gamma,\sigma}}\m^-_{\Gamma}(\pi_{\sigma,\nu_k})\hat{g}(i\nu_k)=\sum_{\ell\in \ls_{\Gamma}}g(\ell) L^-_{\Gamma,\sigma}(\ell).
\end{eqnarray}
\end{itemize}

\begin{rem}
For any virtual representation $\eta=\sum a_\sigma \sigma$ of $M$, we can define $\m_{\Gamma,\eta},\; L_{\Gamma,\eta}$ and $\mu_\eta$ as the corresponding weighted sums. With this convention, the above trace formula holds for any virtual representation and not just the irreducible representations.
\end{rem}


\section{Proof of Theorem \ref{t:Spec2Spec}}\label{s:Spec}
Let $G$ denote a fixed semisimple group of real rank one and $\Gamma_1,\Gamma_2\subset G$ two uniform lattices without torsion.
Throughout this section we will keep the two lattices fixed and suppress them from the notation. In particular, we will denote
$\Delta\m_\sigma=m_{\Gamma_1,\sigma}-\m_{\Gamma_2,\sigma},\quad \Delta L_{\sigma}=L_{\Gamma_1,\sigma}-L_{\Gamma_2,\sigma}$,
$\Delta V=\vol(\Gamma_1\bs G)-\vol(\Gamma_2\bs G)$, $\pls=\pls_{\Gamma_1}\cup\pls_{\Gamma_2}$ and $S_\sigma=S_{\Gamma_1,\sigma}\cup S_{\Gamma_2,\sigma}$.

\subsection{Density results for a fixed $\sigma$}
As a first step, for each fixed $\sigma\in \hat{M}$ we will use the trace formula to relate the $\sigma$-representation spectrum to the $\sigma$-length spectrum. In particular, we show that we can recover the $\sigma$-length spectrum from the $\sigma$-representation spectrum and vise versa, and moreover, to do that all we need is to know one of them up to an error of density zero.

\begin{prop}\label{p:spec2length}
For any fixed $\sigma\in \hat{M}$ if
$$\lim_{T\to\infty}\frac{1}{T}\mathop{\sum_{\nu\in i\bbR}}_{|\nu|<T} |\Delta \m_\sigma(\nu)|=0,$$
then $\vol(\Gamma_1\bs G)=\vol(\Gamma_2\bs G)$ and $\Delta L_{\sigma}(\ell)=0$ for all $\ell$.
\end{prop}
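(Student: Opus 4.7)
The plan is to apply the $\sigma$-trace formula (\ref{e:sigmatrace}) to both $\Gamma_1$ and $\Gamma_2$ and subtract; for $\sigma$ ramified and any even $g\in C_c^\infty(\bbR)$ this yields
$$\sum_{\nu_k \in S_\sigma} \Delta\m_\sigma(\nu_k)\hat{g}(i\nu_k) = \Delta V \int_\bbR \hat{g}(\nu)\mu_\sigma(\nu)\,d\nu + \sum_{\ell\in\ls}g(\ell)\Delta L_\sigma(\ell),$$
where $\Delta V:=\vol(\Gamma_1\bs G)-\vol(\Gamma_2\bs G)$. First I pin down $\Delta V$: the Weyl law recalled in the introduction gives $\sum_{|\nu|\leq T}(\m_{\Gamma_1,\sigma}(\nu)-\m_{\Gamma_2,\sigma}(\nu))\sim C\dim(\sigma)\Delta V\, T^d$ with $d=\dim(G/K)\geq 2$, while the triangle inequality and hypothesis give $|\sum_{|\nu|\leq T}(\m_{\Gamma_1,\sigma}(\nu)-\m_{\Gamma_2,\sigma}(\nu))|\leq M(T):=\sum_{|\nu|\leq T}|\Delta\m_\sigma(\nu)|=o(T)\ll T^d$. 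Hence $\Delta V=0$, reducing the identity to a pure spectral-to-length formula.

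It now suffices, for any $\ell_0\in\ls$, to produce a test function that makes the length side pick out $\Delta L_\sigma(\ell_0)$ while forcing the spectral side to be small. Using discreteness of the length spectrum, fix an even $g_0\in C_c^\infty(\bbR)$ with $g_0(\ell_0)=1$ whose support meets $\ls\cup(-\ls)$ only at $\pm\ell_0$. The key construction is the modulated family $g_u(x):=g_0(x)\cos(ux)$ for $u\geq 0$, which remains even with the same small support; the flexibility to use such test functions (rather than just the heat kernel of \cite{ElstrodtGrunewaldMennicke98}) is precisely what the trace formula of \cite{SarnakWakayama99} provides. Since only $\ell=\ell_0$ contributes to the geometric side,
$$\phi(u):=\sum_{\nu_k\in S_\sigma}\Delta\m_\sigma(\nu_k)\hat{g}_u(i\nu_k)=\cos(u\ell_0)\Delta L_\sigma(\ell_0).$$

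The main technical step is to prove $\frac{1}{U}\int_0^U|\phi(u)|\,du\to 0$ as $U\to\infty$. Writing $\nu_k=it_k$ on the principal axis and using $\hat{g}_u(\xi)=\tfrac12(\hat{g}_0(\xi-u)+\hat{g}_0(\xi+u))$, I swap the sum and the $u$-integral. The Schwartz decay $|\hat{g}_0(\xi)|\leq C_A(1+|\xi|)^{-A}$ makes $\int_0^U|\hat{g}_0(t_k\pm u)|\,du$ bounded by $\|\hat{g}_0\|_1$ for $|t_k|\leq 2U$ and small for $|t_k|>2U$. The first range contributes at most $\|\hat{g}_0\|_1 M(2U)/U=o(1)$ by the density hypothesis, and the tail is controlled by Abel summation against $M(T)=o(T)$ (taking any $A\geq 2$), yielding $O(U^{1-A})=o(1)$. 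The finitely many complementary-series and $\mathfrak{S}$-terms contribute a bounded constant, absorbed into the error.

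Once this averaged bound is in hand, a Chebyshev argument finishes the proof: the set $\{u\in[0,U]:|\phi(u)|\geq\eta\}$ has measure $o(U)$ while $\{u\in[0,U]:\cos(u\ell_0)\geq 1/2\}$ has measure roughly $U/3$, so for large $U$ their difference is nonempty and produces a sequence $u_n\to\infty$ with $|\phi(u_n)|\to 0$ and $\cos(u_n\ell_0)\geq 1/2$; therefore $|\Delta L_\sigma(\ell_0)|\leq 2|\phi(u_n)|\to 0$, and since $\ell_0\in\ls$ was arbitrary, $\Delta L_\sigma\equiv 0$. The hardest ingredient is the averaged bound on $\phi$, in which the Schwartz decay of $\hat{g}_0$ must be paired carefully with the sublinear growth of $M(T)$; the unramified case $\sigma\neq w\sigma$ will require running the analogous argument in parallel with the odd trace formula (\ref{e:sigmatraceodd}) and the Weyl conjugate $w\sigma$, assembling $\Delta L_\sigma$ from $\Delta L_\sigma^\pm$.
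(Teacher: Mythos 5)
Your proof is correct, and it reaches the conclusion by a route that differs from the paper's in the choice of test-function family. The paper fixes $\ell_0$ and \emph{dilates}: it takes $g_T(\ell)=g(T(\ell-\ell_0))+g(T(\ell+\ell_0))$, so the geometric side converges to $\Delta L_\sigma(\ell_0)$ outright while the spectral side carries an explicit factor $\tfrac1T\hat g(\nu/T)$ that is killed by the hypothesis $M(T)=o(T)$ via the same dyadic summation you use. You instead fix a bump isolating $\ell_0$ (using discreteness of the length set), \emph{modulate} by $\cos(ux)$, and average over the modulation parameter, then extract a good sequence $u_n$ by Chebyshev; the exact identity $\phi(u)=\cos(u\ell_0)\Delta L_\sigma(\ell_0)$ makes the last step immediate (indeed $\tfrac1U\int_0^U|\phi|\to\tfrac2\pi|\Delta L_\sigma(\ell_0)|$, so the Chebyshev selection is not even needed). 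Both arguments use the hypothesis in the identical way --- the range $|t_k|\le 2U$ contributes $\|\hat g_0\|_1 M(2U)/U$ and the tail is handled by summation by parts against $M$ --- so the gain of your version is mainly aesthetic (fixed geometric support, no shrinking), at the cost of the extra averaging layer. Two small points to tighten: the finitely many complementary-series and $\mathfrak S$-terms are not merely ``a bounded constant'' in $\phi(u)$ --- a constant would survive the average $\tfrac1U\int_0^U$ --- rather, each is $O_N((1+u)^{-N})$ because $\hat g_0$ decays rapidly on horizontal lines $\mathrm{Im}\,\xi=\nu$, $\nu\in(0,\rho)$, by Paley--Wiener, and it is this decay that makes their contribution to $\int_0^U|\phi|$ bounded and hence $o(U)$. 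And the unramified case is only sketched; to run it you also need that the hypothesis for $\sigma$ controls $\sum_{|\nu|<T}|\Delta\m_{w\sigma}(\nu)|$, which follows from $\m_\Gamma(\pi_{\sigma,\nu})=\m_\Gamma(\pi_{w\sigma,-\nu})$ and the symmetry of the sum over $|\nu|<T$, before applying the odd trace formula with an odd bump times $\cos(ux)$.
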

\begin{proof}
Assume first that $\sigma$ is ramified.
The equality of the volumes follows from Weyl's law for the principal series (see \cite[Corollary 1]{MiatelloVargas83}).
It remains to show the equality for the $\sigma$-length spectrum. To do this we will use the trace formula with an appropriate test function.

Fix $g\in C^\infty_c(\bbR)$ even and supported on $[-1,1]$ with $g(0)=1$. Fix $\ell_0\in (0,\infty)$ and let $g_{T}(\ell)=g(T(\ell-\ell_0))+g(T(\ell+\ell_0))$ for $T$ a large parameter.
Taking the difference of the trace formulas for $\Gamma_1$ and $\Gamma_2$ applied to $g_{T}$ we get
\begin{equation}\label{e:tracediff1}
\sum_{\nu_k\in i(0,\rho)\cup \bbR} \Delta\m_{\sigma}(i\nu_k)\hat{g}_{T}(\nu_k)=\sum_{\ell\in\pls} g_{T}(\ell)\Delta L_{\sigma}(\ell).
\end{equation}
Note that the term involving the volume cancels out.

Since the function $g_{T}$ is supported on $[\ell_0-\tfrac{1}{T},\ell_0+\tfrac{1}{T}]\cup [-\ell_0-\tfrac{1}{T},-\ell_0+\tfrac{1}{T}]$, for $T$ sufficiently large the right hand side of \eqref{e:tracediff1} is given by  $\Delta L_{\sigma}(\ell_0)(1+g(2T\ell_0))$ which converges to $\Delta L_{\sigma}(\ell_0)$ as $T\to\infty$.

On the other hand, $\hat{g}_{T}(\nu)=\frac{1}{T}\hat{g}(\frac{\nu}{T})2\cos(\nu \ell_0)$ is bounded by $\frac{2}{T}|\hat{g}(\frac{\nu}{T})|$ for $\nu\in \bbR$ and by $\frac{2}{T}\hat{g}(\frac{\nu}{T})\cosh(\rho \ell_0)$ for $\nu\in i(0,\rho)$.
We can thus bound,
\begin{eqnarray*}
|\sum_{\nu_k\in i(0,\rho)\cup \bbR} \Delta\m_{\sigma}(i\nu_k)\hat{g}_{T}(\nu_k)|&\lesssim &  \frac{1}{T}\sum_{\nu_k\in i(0,\rho)} |\Delta\m_{\sigma}(i\nu_k)|\\
&&+
\frac{1}{T}\sum_{\nu_k\in \bbR} |\Delta\m_{\sigma}(i\nu_k)||\hat{g}(\frac{\nu_k}{T})|.
\end{eqnarray*}
The first (finite) sum goes to zero in the limit
and for the second sum, using the fast decay of $\hat{g}(\nu)\lesssim \frac{1}{1+|\nu|^3}$ for $\nu\in \bbR $ we can bound
\begin{eqnarray*}
\lefteqn{\frac{1}{T}\sum_{\nu_k\in \bbR} |\Delta\m_{\sigma}(i\nu_k)||\hat{g}(\frac{\nu_k}{T})|=}\\
&&= \frac{1}{T}\sum_{j=1}^\infty \sum_{|\nu_k|\in [T(j-1),Tj]} |\Delta\m_{\sigma}(i\nu_k)||\hat{g}(\frac{\nu_k}{T})|\\
&& \lesssim \sum_{j=1}^\infty \frac{1}{j^2} \left(\frac{1}{jT}\sum_{|\nu_k|\leq jT} |\Delta\m_{\sigma}(i\nu_k)|\right).
\end{eqnarray*}
From the assumption that $ \frac{1}{T}\sum_{|\nu_k|\leq T} |\Delta\m_{\sigma}(i\nu_k)|\to 0$ as $T\to\infty$ it is not hard to see that the above sum also goes to zero in the limit.
Comparing this with the right hand side, we get that $|\Delta L_\sigma(\ell_0)|=0$.

When $\sigma$ is unramified, we recall that $\m_{\sigma}(\nu)=\m(\pi_{\sigma,\nu})=\m(\pi_{w\sigma,-\nu})$ for all but finitely many values of $\nu$.
Consequently, we have that also
$$\lim_{T\to\infty}\frac{1}{T}\mathop{\sum_{\nu\in i\bbR}}_{|\nu|<T} |\Delta \m^\pm_\sigma(\nu)|=0.$$
Using the same argument we get that $\Delta L_{\sigma}+\Delta L_{w\sigma}=0$, while a similar argument with an odd test function gives $\Delta L_{\sigma}-\Delta L_{w\sigma}=0$, implying that $\Delta L_{\sigma}=\Delta L_{w\sigma}=0$.
\end{proof}
\begin{rem}
We note that the last argument using the trace formula with an odd test function is only needed when $\calH$ is real hyperbolic space with $\dim(\calH)\equiv 1\pmod{4}$. Indeed, when $\dim(\calH)\equiv 3\pmod{4}$ any ramified $\sigma\in \hat{M}$ satisfies $\chi_{\sigma}(m)=\chi_{w\sigma}(m^{-1})$. Consequently, since $\ell_{\gamma}=\ell_{\gamma^{-1}}$ and $m_{\gamma^{-1}}=m_{\gamma}^{-1}$, we get that in this case $L_{\Gamma,\sigma}(\ell)=L_{\Gamma,w\sigma}(\ell)$ automatically. We note however that this is not the case when $\dim\calH\equiv 1\pmod{4}$. Here,  $\chi_{\sigma}(m)\neq \chi_{w\sigma}(m^{-1})$ and there is no obvious reason to suspect that $L_{\Gamma,\sigma}= L_{\Gamma,w\sigma}$ in this case.
\end{rem}

\begin{prop}\label{p:length2spec}
For any ramified $\sigma\in \hat{M}$, if
\[\lim_{T\to\infty}\frac{1}{T}\mathop{\sum_{\ell\in \pls}}_{\ell\leq T} |\Delta L_{\sigma}(\ell)|=0,\]
then $\Delta\m_{\sigma}(\nu)=0$ for all $\nu\in (0,\rho)\cup i\bbR$.
When $\sigma$ is unramified we have the same result with $\Delta \m_\sigma^+(\nu)$ and $\Delta L_\sigma^+$ instead.
\end{prop}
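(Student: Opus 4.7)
The plan is to mirror the proof of Proposition \ref{p:spec2length}, interchanging the roles of the spectral and geometric sides of the trace formula. Fix $\nu_0=iy_0\in i\bbR^+$, pick an even $\phi\in C_c^\infty([-1,1])$ with $\hat\phi(0)=1$ and $\phi(1)\neq 0$, and set $g_T(\ell)=T^{-1}\phi(\ell/T)\cos(y_0\ell)$. Then $g_T$ is even, supported in $[-T,T]$, has $\|g_T\|_\infty=O(1/T)$, and
$$\hat g_T(\nu)=\tfrac12\bigl[\hat\phi(T(\nu-y_0))+\hat\phi(T(\nu+y_0))\bigr]$$
concentrates near $\nu=\pm y_0$. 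Differencing \eqref{e:sigmatrace} for $\Gamma_1$ and $\Gamma_2$ yields
$$\sum_{\nu_k\in S_\sigma}\Delta\m_\sigma(\nu_k)\hat g_T(i\nu_k)=\Delta V\int_\bbR\hat g_T(\nu)\mu_\sigma(\nu)\,d\nu+\sum_{\ell\in\pls}g_T(\ell)\Delta L_\sigma(\ell).$$
The right-hand side vanishes in the limit: the length sum is at most $\|\phi\|_\infty T^{-1}\sum_{\ell\le T}|\Delta L_\sigma(\ell)|=o(1)$ by hypothesis, while the substitution $u=T(\nu\mp y_0)$ together with continuity of $\mu_\sigma$ shows that the volume integral is $O(1/T)$, so multiplied by $\Delta V$ it is still $o(1)$.

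For the principal part $\sum_{\nu_k=iy_k\in i\bbR^+}$ of the left-hand side, discreteness of the spectrum pins the contribution near $y_0$ to $\tfrac12\hat\phi(0)\Delta\m_\sigma(iy_0)=\tfrac12\Delta\m_\sigma(iy_0)$, while the tail is controlled by a dyadic decomposition $y_k\in[jT,(j+1)T]$, the Weyl bound $\sum_{y_k\le T}|\Delta\m_\sigma(iy_k)|=O(T^d)$, and the rapid Schwartz decay of $\hat\phi$, in direct analogy with the tail estimate in the proof of Proposition \ref{p:spec2length}. Hence this part converges to $\tfrac12\Delta\m_\sigma(iy_0)$ as $T\to\infty$.

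The main obstacle is the finite complementary part $\sum_{\nu_k\in(0,\rho)}\Delta\m_\sigma(\nu_k)\hat g_T(i\nu_k)$; since $\phi$ is compactly supported, $\hat\phi$ grows exponentially along the imaginary axis, and a boundary Laplace asymptotic gives
$$\hat g_T(i\nu_k)\sim\frac{\phi(1)\,e^{T\nu_k}}{T(\nu_k^2+y_0^2)}\bigl(\nu_k\cos(Ty_0)+y_0\sin(Ty_0)\bigr),$$
so each such term can blow up exponentially in $T$. I would handle this by contradiction and downward induction on the largest $\nu^*\in(0,\rho)$ with $\Delta\m_\sigma(\nu^*)\neq 0$: for generic $y_0$ the oscillatory factor at $\nu^*$ stays bounded below along a positive-density subsequence of $T$, so $\Delta\m_\sigma(\nu^*)\hat g_T(i\nu^*)$ strictly dominates every other term in the (finite) complementary sum and would render the left-hand side unbounded, contradicting the bounded principal contribution and the $o(1)$ right-hand side; hence $\Delta\m_\sigma(\nu^*)=0$, and iteration exhausts the complementary spectrum. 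With the complementary sum then identically zero, the identity for any $y_0>0$ reduces to $\tfrac12\Delta\m_\sigma(iy_0)=o(1)$, so $\Delta\m_\sigma\equiv 0$ on $i\bbR$ as well. The unramified case is obtained by applying the same argument to $\Delta\m_\sigma^+$ and $\Delta L_\sigma^+$.
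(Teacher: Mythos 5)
Your treatment of the principal series (the test function $g_T(\ell)=T^{-1}\phi(\ell/T)\cos(y_0\ell)$, the $o(1)$ bound on the geometric side from the hypothesis, the $O(1/T)$ volume integral, and the tail estimate via Weyl's law and the decay of $\hat\phi$) is exactly the paper's second step and is fine. The genuine problem is your handling of the complementary series. Your contradiction argument hinges on a lower bound for $|\hat g_T(i\nu^*)|$ along a positive-density set of $T$, which you extract from the asymptotic $\hat g_T(i\nu^*)\sim \phi(1)e^{T\nu^*}(\nu^*\cos(Ty_0)+y_0\sin(Ty_0))/(T((\nu^*)^2+y_0^2))$. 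But a function $\phi\in C^\infty_c(\bbR)$ supported in $[-1,1]$ necessarily has $\phi(1)=0$ (indeed $\phi^{(k)}(1)=0$ for all $k$), so the leading coefficient of your boundary Laplace expansion vanishes identically and the stated oscillatory factor is not the true one. What one actually has is $\hat g_T(i\nu)=\int_{-1}^{1}\phi(t)\cos(Ty_0t)e^{T\nu t}\,dt$, an oscillatory Laplace integral with no nonvanishing endpoint term; repeated integration by parts only yields upper bounds $O(e^{T\nu}T^{-N})$, and extracting a usable lower bound along a positive-density set of $T$ would require a genuine saddle-point analysis depending on the fine behavior of $\phi$ near $t=\pm1$. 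As written, the claimed dominance of the $\nu^*$ term is not established, so the contradiction does not close and the complementary spectrum is not eliminated.

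The paper sidesteps this entirely by treating the complementary series \emph{first} with a test function tailored to it: it takes $\hat g_T(\nu)=\nu_0\sin(T\nu)\hat g(\nu)/(\nu\sinh(T\nu_0)\hat g(i\nu_0))$, where $\nu_0$ is the largest complementary parameter with $\Delta\m_\sigma(\nu_0)\neq 0$. This function equals $1$ at $i\nu_0$, is exponentially small (after the $\sinh(T\nu_0)$ normalization) at every other spectral point, and on the geometric side $g_T$ is a normalized convolution $g*\id_T$ supported in $[-T-1,T+1]$ with $\|g_T\|_\infty\lesssim 1/\sinh(\nu_0T)$, so the length sum is $\lesssim T/\sinh(\nu_0T)\to0$ using only the trivial growth of $\sum_{\ell\le T}|\Delta L_\sigma(\ell)|$. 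This forces $\Delta\m_\sigma(\nu_0)=0$ directly, with no need for a lower bound on an oscillatory integral. If you want to keep your architecture, you should replace your complementary-series step with a test function of this type (one normalized to be exactly $1$ at the target complementary point and exponentially damped elsewhere); the rest of your argument then goes through.
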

\begin{proof}
We will write down the proof for ramified $\sigma$, the proof in the unramified case is identical.

We first show that $\Delta\m_{\sigma}(\nu)=0$ for all $\nu\in (0,\rho)$. Indeed, if not let $\nu_0\in (0,\rho)$ denote the largest element for which  $\Delta\m_{\sigma}(\nu_0)\neq 0$ and consider the test function
$$\hat{g}_T(\nu)=\frac{\nu_0\sin(T\nu)\hat{g}(\nu)}{\nu \sinh(T\nu_0)\hat{g}(i\nu_0)},$$
with $g\in C^\infty_c(\bbR)$ even, supported on $[-1,1]$ with $g(0)=1$ and satisfies that $\hat{g}(i\nu_0)\neq 0$.
Taking the difference of the trace formulas for $\Gamma_1$ and $\Gamma_2$ we get
 \begin{equation*}
\sum_{\nu_k\in i(0,\rho)\cup \bbR} \Delta\m_{\sigma}(i\nu_k)\hat{g}_T(\nu_k)=\Delta V\int_{\bbR}\hat{g}_T(\nu)\mu_{\sigma}(\nu)d\nu+\sum_{\ell\in L_\Gamma} g_T(\ell)\Delta L_{\sigma}(\ell)
\end{equation*}
For any $\nu\in(0,\rho)$ with $\nu<\nu_0$ we have that
$\hat{g}_T(i\nu)\lesssim e^{T(\nu_0-\nu)}$ and $\hat{g}_T(i\nu_0)=1$.
For $\nu\in (0,1)$ we can bound $|\hat{g}_T(\nu)|\lesssim \frac{T}{\sinh(T\nu_0)}$ and for $\nu\in [1,\infty)$ we have
$|\hat{g}_T(\nu)|\lesssim \frac{g(\nu)}{\sinh(T\nu_0)}$. Consequently we get that as $T\to\infty$ the left hand side of the trace formula converges to $\Delta\m_{\sigma}(\nu_0)$.

To estimate the right hand side, we can bound the integral
\begin{eqnarray*}|\int_{\bbR}\hat{g}_T(\nu)\mu_{\sigma}(\nu)d\nu|\lesssim \frac{T}{\sinh(\nu_0 T)}\int_0^1|\hat{g}(\nu)|\mu_{\sigma}(\nu)d\nu\\
+\frac{1}{\sinh(\nu_0 T)}\int_1^\infty |\hat{g}(\nu)|\mu_{\sigma}(\nu)d\nu\lesssim  \frac{T+1}{\sinh(\nu_0 T)}.
\end{eqnarray*}
For the sum, note that $g_T$ is given by a convolution
$$g_T(\ell)=\frac{\nu_0}{\sinh(T\nu_0)\hat{g}(i\nu_0)}g*\id_{T},$$
where $\id_{T}$ is the indicator function of $[-T,T]$. In particular, $g_{T}$ is supported on $[-T-1,T+1]$ and satisfies $|g_T(\ell)|\lesssim \frac{1}{\sinh(\nu_0 T)}$ implying that
\begin{eqnarray*}
|\sum_{\ell_j}g_T(\ell_j)\Delta L_{\sigma}(\ell_j)|\lesssim \frac{1}{\sinh(\nu_0 T)}\sum_{\ell_j\leq T+1}|\Delta L_{\sigma}(\ell_j)|\lesssim \frac{T}{\sinh(\nu_0 T)}.
\end{eqnarray*}
So as $T\to \infty$ the right hand side goes to zero implying that $\Delta\m_{\sigma}(\nu_0)=0$ as well.

Next we show that $\Delta\m_{\sigma}(i\nu)=0$ for all $\nu\in \bbR$. For this we fix $\nu_0\in [0,\infty)$ and consider the function
$g_{T}(\ell)=\frac{1}{T}g(\frac{\ell}{T})2\cos(\ell \nu_0)$, so that $\hat{g}_{T}(\nu)=\hat{g}(T(\nu-\nu_0))+\hat{g}(T(\nu+\nu_0))$. Since we already showed that $\Delta\m_{\sigma}(\nu)=0$ for $\nu\in (0,\rho)$ the difference of the trace formulas takes the form
\begin{equation*}
\sum_{\nu_k\in \bbR} \Delta\m_{\sigma}(i\nu_k)\hat{g}_{T}(\nu_k)=\Delta V\int_{\bbR}\hat{g}_{T}(\nu)\mu_{\sigma}(\nu)d\nu+\sum_{\ell_j} g_{T,x}(\ell_j)\Delta L_{\sigma}(\ell_j)
\end{equation*}
A simple change of variables shows that the integral $\int_{\bbR}\hat{g}_{T}(\nu)\mu_{\sigma}(\nu)d\nu$ goes to zero as $T\to\infty$. The sum
\begin{eqnarray}|\sum_{\ell_j} g_{T}(\ell_j)\Delta L_{\sigma}(\ell_j)|\lesssim  \frac{1}{T}\sum_{\ell_j\leq T}|\Delta L_{\sigma}(\ell_j)|
\end{eqnarray}
also goes to zero by our assumption.

Next we estimate the left hand side.
Let $\delta>0$ be such that  $\m_{\Gamma_1,\sigma}(i\nu_k)=\m_{\Gamma_2,\sigma}(i\nu_k)=0$ for all $\nu_k\in[\nu_0-\delta,\nu_0+\delta]\setminus\{\nu_0\}$. The left hand side of the trace formula can be written as
\begin{eqnarray*}
\Delta\m_{\sigma}(i\nu_{0})\hat{g}_{T}(\nu_0)+ \sum_{\nu_k\in \bbR\setminus\{\nu_0\}} \Delta\m_{\sigma}(i\nu_k)\hat{g}_{T}(\nu_k)
\end{eqnarray*}
The first term is $\Delta\m_{\sigma}(i\nu_{0})(1+o(1))$ as $T\to\infty$. For the rest of the sum we can bound $|\Delta\m_{\sigma}|\leq \m_{\Gamma_1,\sigma}+\m_{\Gamma_2,\sigma}$ and for each of the two lattices we have
\begin{eqnarray*}
\sum_{\nu_k\in \bbR\setminus\{\nu_0\}} \m_{\Gamma,\sigma}(i\nu_k)|\hat{g}_{T}(\nu_k)|&=&
\sum_{|\nu_k-\nu_0|\in (\delta,1)} \m_{\Gamma,\sigma}(i\nu_k)|\hat{g}_{T}(\nu_k)|\\
&+&\sum_{j=1}^\infty \sum_{|\nu_k-\nu_0|\in [j,j+1)} \m_{\Gamma,\sigma}(i\nu_k)|\hat{g}_{T}(\nu_k)|\\
\end{eqnarray*}
Using the fast decay of $\hat{g}$ we can bound $|\hat{g}_{T}(\nu)|\lesssim_N \frac{1}{{Tj}^N}$ for $|\nu-\nu_0|>j$ and $|\hat{g}_{T}(\nu)|\lesssim \frac{1}{T\delta}$ for $|\nu-\nu_0|\in(\delta,1)$
implying the bound
\begin{eqnarray*}
\lefteqn{\sum_{\nu_k\in \bbR\setminus\{\nu_0\}} \m_{\Gamma,\sigma}(i\nu_k)|\hat{g}_{T}(\nu_k)|}\\
&& \lesssim_N
\frac{1}{T}\left(\sum_{|\nu_k-\nu_0|\in [\delta,1]} \m_{\Gamma,\sigma}(i\nu_k)+\sum_{j=1}^\infty \frac{1}{j^N}\sum_{|\nu_k-\nu_0|\in [j,j+1]} \m_{\Gamma,\sigma}(i\nu_k)\right).\\
\end{eqnarray*}
Since for $N$ sufficiently large (depending only on the dimension) the sum
$$\sum_{j=1}^\infty \frac{1}{j^N}\sum_{|\nu_k-\nu_0|\in [j,j+1]} \m_{\Gamma,\sigma}(i\nu_k)$$
converges we get that the left hand side of the formula converges to $\Delta\m_{\sigma}(i \nu_{0})$, and since the right hand side goes to zero we have $\Delta\m_{\sigma}(i \nu_{0})=0$.
\end{proof}

Since $\m_{\Gamma,\sigma}(\nu)=\m_\Gamma(\pi_{\sigma,\nu})$ for all but finitely many values of $\nu$ we have that
$$\mathop{\sum_{\nu\in i\bbR}}_{|\nu|\leq T}|\Delta \m_\sigma(\nu)|=D_\sigma(\Gamma_1,\Gamma_2;T)+O(1).$$
Thus, combining Propositions \ref{p:spec2length} and \ref{p:length2spec} we get
\begin{thm}\label{t:Spec2Length2Spec}
For any fixed $\sigma\in \hat{M}$ the following are equivalent
\begin{enumerate}
\item  $\lim_{T\to\infty}\frac{\D_\sigma(\Gamma_1,\Gamma_2;T)}{T}=0.$

\item $\lim_{T\to\infty}\frac{1}{T}\sum_{\ell_j\leq T}|L_{\Gamma_1,\sigma}(\ell_j)-L_{\Gamma_2,\sigma}(\ell_j)|=0.$

\item $\Gamma_1$ and $\Gamma_2$ are $\sigma$-length equivalent, $\sigma$-representation equivalent and $\vol(\Gamma_1\bs G)=\vol(\Gamma_2\bs G)$.
\end{enumerate}
\end{thm}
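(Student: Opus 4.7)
My plan is to deduce the three-way equivalence directly by combining Propositions \ref{p:spec2length} and \ref{p:length2spec}, using condition (3) as the hub through which (1) and (2) communicate.

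The implications (3) $\Rightarrow$ (1) and (3) $\Rightarrow$ (2) are essentially trivial: if the $\sigma$-length and $\sigma$-representation spectra of $\Gamma_1$ and $\Gamma_2$ agree identically, then $\Delta L_\sigma\equiv 0$ and $\Delta \m_\sigma\equiv 0$, so the density functions in (1) and (2) vanish. The only point to note is the $O(1)$ discrepancy between $\m_{\Gamma,\sigma}(\nu)$ and $\m_\Gamma(\pi_{\sigma,\nu})$ coming from the finitely many representations in $\mathfrak{S}$; this is absorbed after division by $T$, so $\sigma$-representation equivalence still forces $\D_\sigma(\Gamma_1,\Gamma_2;T)/T\to 0$.

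For (1) $\Rightarrow$ (3), I would first use the fact recorded in Section \ref{s:sigmaspectra} that $\m_{\Gamma,\sigma}(\nu)=\m_\Gamma(\pi_{\sigma,\nu})$ for all but finitely many $\nu$ to rewrite the hypothesis as $\lim_{T\to\infty}\frac{1}{T}\sum_{|\nu|\leq T}|\Delta\m_\sigma(\nu)|=0$. Proposition \ref{p:spec2length} then yields both $\vol(\Gamma_1\bs G)=\vol(\Gamma_2\bs G)$ and $\Delta L_\sigma\equiv 0$, so (2) holds. Feeding this back into Proposition \ref{p:length2spec} upgrades $\sigma$-length equivalence to $\Delta\m_\sigma(\nu)=0$ for every $\nu$, giving the full $\sigma$-representation equivalence. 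The reverse direction (2) $\Rightarrow$ (3) is symmetric: Proposition \ref{p:length2spec} gives $\Delta\m_\sigma\equiv 0$ which implies (1), and then Proposition \ref{p:spec2length} closes the loop by supplying equality of covolumes and $\Delta L_\sigma\equiv 0$.

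The only real subtlety I anticipate is the bookkeeping when $\sigma$ is unramified, where Proposition \ref{p:length2spec} is phrased in the $\pm$ form and one must invoke both even and odd test functions in the trace formula (as in the last part of the proof of Proposition \ref{p:spec2length}) to separate the individual $\sigma$-data from the symmetrized $\sigma+w\sigma$ and $\sigma-w\sigma$ versions. Apart from this routine case analysis, every step of the argument is a direct invocation of the two propositions already proved.
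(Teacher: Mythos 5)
Your proposal is correct and follows essentially the same route as the paper, whose entire proof consists of noting that $\sum_{|\nu|\leq T}|\Delta\m_\sigma(\nu)|=\D_\sigma(\Gamma_1,\Gamma_2;T)+O(1)$ and then combining Propositions \ref{p:spec2length} and \ref{p:length2spec} in exactly the cyclic fashion you describe. Your explicit flagging of the unramified case (needing both the even and odd trace formulas to separate the $\sigma$ and $w\sigma$ data) is if anything more careful than the paper's one-line argument.
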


\subsection{Proof of Theorem  \ref{t:Spec2Spec}}
The first part is a special case of Theorem \ref{t:Spec2Length2Spec} with $\sigma=1$ (recall that $\m_{\Gamma,1}(\nu)=\m_{\Gamma}(\pi_{1,\nu})$ is the multiplicity of the eigenvalue $\rho^2-\nu^2$). We also note that the second condition in Theorem \ref{t:Spec2Length2Spec} with $\sigma=1$ gives an analogous result for the $1$-length spectrum.

For the second part, let $\frak{B}\subset\hat{M}$ be a finite set and assume that
$\frac{\D_\sigma(\Gamma_1,\Gamma_2;T)}{T}\to 0$ for all $\sigma\in \hat{M}\setminus \mathfrak{B}$.
From Theorem \ref{t:Spec2Length2Spec} we get that $\vol(\Gamma_1\bs G)=\vol(\Gamma_2\bs G)$ and $L_{\Gamma_1,\sigma}=L_{\Gamma_2,\sigma}$ for all $\sigma\in \hat{M}\setminus \mathfrak{B}$. We will show that $\Gamma_1$ and $\Gamma_2$ have the same complex length spectrum and are hence representation equivalent.

Fix an arbitrary length $\ell_0\in (0,\infty)$ and holonomy  $m_0\in M$. Since there are only finitely many geodesics of length $\ell_0$ we can find $B\subset M^\#$ a small open neighborhood of $[m_0]$ satisfying that
$\{[m_{\gamma}]| \ell_\gamma=\ell_0\}\cap B\subseteq \{[m_0]\}$. Let $F\in C^\infty(M)$ denote a smooth class function supported on $B$ satisfying that $F(m_0)=1$ and that
$$\hat{F}(\sigma)=\int_{M}F(m)\chi_\sigma(m)d\mu(m)=0,$$ for all $\sigma\in \mathfrak{B}$. Since these are finitely many conditions we can clearly find such a function. We then have an expansion
\[F=\sum_{\sigma\in \hat{M}} \hat{F}(\sigma)\overline{\chi_\sigma},\]
with
 \[\sum_{\sigma\in \hat{M}} |\hat{F}(\sigma)|<\infty,\]
absolutely converges and $\hat{F}(\sigma)=0$ for all $\sigma\in \mathfrak{B}$.

Next let $g\in C^\infty_c(\bbR)$ be even and supported on a small enough neighborhood of $\ell_0$ such that $\ls_{\Gamma_1}\cup \ls_{\Gamma_2}$ intersects its support at $\{\ell_0\}$ and satisfy $g(\ell_0)=\ell_0^{-1}D(m_0a_{\ell_0})$. We then have
\begin{eqnarray*} \mathop{\sum_{[\gamma], \ell_\gamma=\ell_0}}_{m_\gamma= m_0} \frac{1}{j(\gamma)}=
\sum_{\ell\in LS_{\Gamma}}g(\ell)\ell \mathop{\sum_{[\gamma]}}_{\ell_{\gamma}=\ell}\frac{F(m_\gamma)}{j(\gamma)D(\gamma)}. \end{eqnarray*}
Expand $F(m_\gamma)$ and change the order of summation (note that all series converges absolutely) to get
 \begin{eqnarray*}
 \mathop{\sum_{[\gamma], \ell_\gamma=\ell_0}}_{m_\gamma= m_0} \frac{1}{j(\gamma)}=
 2\sum_{\sigma\in \hat{M}} \hat{F}(\sigma)\sum_{\ell\in LS_{\Gamma}}g(\ell)L_{\Gamma,\sigma}(\ell). \end{eqnarray*}
Since the right hand side is the same for $\Gamma=\Gamma_1$ and $\Gamma=\Gamma_2$ then so is the left hand side, implying that
\[\mathop{\sum_{[\gamma]\in \Gamma_1^\#}}_{(\ell_{\gamma},m_\gamma)=(\ell_0,m_0)} \frac{1}{j_{\Gamma_1}(\gamma)}=\mathop{\sum_{[\gamma]\in \Gamma_2^\#}}_{(\ell_{\gamma},m_\gamma)=(\ell_0,m_0)} \frac{1}{j_{\Gamma_2}(\gamma)}.\]
Since this is true for any pair $(\ell_0,m_0)$ we get that the complex length spectrum is the same.

\subsection{Further refinement}\label{s:FurtherRefinement}
We note that the above proof will still work if we replace the finite set $\mathfrak{B}$ with an infinite set, as long as it is sufficiently sparse so that for any
small neighborhood $B\subset M^\#$ we can find a smooth class function $F$ supported on $B$ with $\hat{F}(\sigma)=0$ for all $\sigma\in \mathfrak{B}$.

When $M=\SO(2)$ we can identify $\hat{M}$ with $\bbZ$ where  $\chi_{\sigma_n}(\theta)=e^{in \theta}$ while for $M=\SO(3)$ or $\SU(2)$ we can identify $\hat{M}$ with $\bbN$ where $\chi_{\sigma_n}(\theta)=\frac{\sin(n\theta)}{\sin(\theta)}$. For these cases the following lemma shows that we can take the set $\mathfrak{B}$ to be any set satisfying that
\begin{equation}\label{e:density}\#\{n\in \mathfrak{B}: |n|\leq T\}\lesssim T^\alpha,\mbox{ with } \alpha<1.\end{equation}

\begin{lem}
Let $\mathfrak{B}\subset \bbZ$ satisfy \eqref{e:density}.
Then for any $\delta>0$ and $\theta_0\in[0,2\pi)$ there is $f\in C^\infty(\bbZ/2\pi\bbZ)$ supported on $(\theta_0-\delta,\theta_0+\delta)$ with $f(\theta_0)=1$ and $\hat{f}(n)=0$ for all $n\in \mathfrak{B}$.
\end{lem}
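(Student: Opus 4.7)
The plan is to construct $f$ in two stages: first produce a (merely $L^2$) function $g$ supported in a slightly smaller interval whose Fourier coefficients vanish on $\mathfrak{B}$, then smooth by convolution with a bump. By translating we may assume $\theta_0 = 0$; the task becomes to find $f \in C^\infty(\bbZ/2\pi\bbZ)$ supported in $(-\delta,\delta)$ with $f(0)=1$ and $\hat f(n) = 0$ for every $n \in \mathfrak{B}$.

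The key input is the existence of a nonzero $g \in L^2(\bbZ/2\pi\bbZ)$ with $\supp g \subset (-\delta/2, \delta/2)$ and $\hat g(n) = 0$ for all $n \in \mathfrak{B}$. The polynomial bound $|\mathfrak{B} \cap [-T,T]| \lesssim T^\alpha$ with $\alpha<1$ forces $\mathfrak{B}$ to have Beurling--Malliavin density zero, and the completeness radius theorem then ensures that $\{e^{in\theta}\}_{n\in\mathfrak{B}}$ is not complete in $L^2(-\delta/2,\delta/2)$; any nonzero element of its orthogonal complement, extended by zero to $\bbZ/2\pi\bbZ$, furnishes such a $g$.

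Given $g$, pick any $\varphi \in C^\infty_c((-\delta/2, \delta/2))$ and set $F = g * \varphi$. Then $F \in C^\infty(\bbZ/2\pi\bbZ)$, $\supp F \subset (-\delta, \delta)$, and
$$\hat F(n) = \hat g(n)\,\hat\varphi(n) = 0 \qquad \text{for all } n \in \mathfrak{B}.$$
To arrange $F(0) \ne 0$, note that $F(0) = \int g(-t)\varphi(t)\,dt$; if this integral vanished for every admissible $\varphi$, we would conclude $g \equiv 0$ on $(-\delta/2, \delta/2)$, contradicting $g \ne 0$. So $\varphi$ can be chosen with $F(0) \ne 0$; rescale to $F(0)=1$, and translate back by setting $f(\theta) := F(\theta - \theta_0)$.

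The main obstacle is the density-zero completeness input. For the polynomial-sparsity regime $\alpha<1$ at hand, one can bypass the full Beurling--Malliavin machinery by constructing $g$ explicitly: via the Paley--Wiener theorem, such a $g$ is the inverse Fourier transform of an $L^2$ entire function of exponential type $\delta/2$ vanishing on $\mathfrak{B}$, and such an entire function can be built from a Hadamard canonical product indexed by $\mathfrak{B}$, whose convergence and arbitrarily small exponential type are guaranteed by the sparsity bound $\alpha<1$.
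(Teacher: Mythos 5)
Your main argument is correct but takes a genuinely different route from the paper's. You produce the key object --- a nonzero function supported on the half-length interval whose Fourier coefficients vanish on $\mathfrak{B}$ --- softly, by observing that \eqref{e:density} forces the Beurling--Malliavin density of $\mathfrak{B}$ to be zero, so that $\{e^{in\theta}\}_{n\in\mathfrak{B}}$ is incomplete in $L^2(-\delta/2,\delta/2)$; you then upgrade regularity by convolving with a bump supported in $(-\delta/2,\delta/2)$ and secure $f(\theta_0)=1$ by a clean duality argument (if $\int g(-t)\varphi(t)\,dt$ vanished for all admissible $\varphi$ then $g\equiv 0$). The paper instead works entirely on the Fourier side: it multiplies the canonical product $\prod_{n\in\mathfrak{B}}(1-z/n)$, an entire function of growth $O(e^{C|z|^\alpha})$, by the Fourier transform of a compactly supported function decaying like $e^{-C|\xi|^\beta}$ with $\alpha<\beta<1$ (a Gevrey/Denjoy--Carleman-type construction, cited to Aubry), and concludes via Paley--Wiener. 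Your route buys brevity and sidesteps quasi-analyticity, at the price of invoking the completeness-radius theorem, a far deeper input than the problem requires; the paper's route is elementary modulo the existence of the sub-exponentially decaying multiplier. One caveat on your closing ``bypass'' remark: the Hadamard product alone is not $L^2$ on $\bbR$ --- it \emph{grows} like $e^{c|x|^\alpha}$ there --- so it cannot by itself be the Fourier transform of an $L^2$ function of exponential type $\delta/2$. It must be damped by precisely the kind of multiplier the paper constructs, and producing a compactly supported function whose Fourier transform decays faster than $e^{-c|\xi|^\alpha}$ is the real content of the explicit approach. Since your primary argument does not depend on that remark, the proof stands as written.
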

\begin{proof}
Since the Fourier transform of  $f(\theta-\theta_0)$ vanishes together with the Fourier transform of $f(\theta)$ we may assume that $\theta_0=0$. Also, since we may always renormalize, it is enough to show that there is $f\in C^\infty_c(\bbR)$ not identically zero that is supported on $(-\delta,\delta)$ with $\hat{f}(n)=0$ for $n\in \mathfrak{B}$.

Let $\beta\in(\alpha,1)$ and let $f_1\in C^\infty_c(\bbR)$ be smooth and supported on $(-\delta/2,\delta/2)$ with Fourier transform satisfying $\hat{f}_1(\xi)\lesssim e^{-C|\xi|^\beta}$ for $\xi\in \bbR$. We note that a compactly supported function with this decay exists for any $\beta\in(0,1)$ (see e.g. \cite[Lemma 6]{Aubry06}) and we can make the support as small as we want by dilation. The support of $f_1$ implies that $\hat{f}_1$ can be extended to an entire function on the complex plane of exponential growth $|\hat{f}_1(z)|\lesssim \exp(\delta |z|/2)$.

Next consider the function defined by the infinite product
$$\hat{f}_2(z)=\prod_{n\in \mathfrak{B}}(1-\frac{z}{n}).$$
The condition \eqref{e:density} implies that the product converges (uniformly on compacta) to an entire function with exponential growth $|\hat{f}_2(z)|\lesssim e^{C|z|^\alpha}$. This function clearly vanishes on $\mathfrak{B}$ (but it is not the Fourier transform of a compactly supported function).

We now define $\hat{f}(z)=\hat{f}_1(z)\hat{f}_2(z)$, then $\hat{f}$ is entire of exponential growth $|\hat{f}(z)|\lesssim e^{\delta|z|/2+C|z|^\alpha}\lesssim e^{\delta|z|}$, and it still vanishes on $\mathfrak{B}$. Moreover, on the real line it decays like
  $$|\hat{f}(\xi)|\lesssim e^{C|\xi|^\alpha-|\xi|^\beta}\lesssim_N \frac{1}{|\xi|^N}.$$
Consequently, the Paley Wiener Theorem (see \cite[
Theorem  19.3]{Rudin66}) implies that $\hat{f}$ is the Fourier transform of a smooth function $f$ that is supported on $(-\delta,\delta)$.
\end{proof}


\section{Alternating trace formula}\label{s:Alternating}
In the proof of Theorem \ref{t:Spec2Spec} we used the fact that the $\sigma$-representation spectrum appearing in the trace formula is essentially given by the multiplicities of the principal series representations. The relation between the $\sigma$-length spectrum and the length spectrum is not that straight forward (even for trivial $\sigma$). In this section we develop a new formula (which is an alternating sum of trace formulas corresponding to certain virtual representations of $M$) where the geometric side involves the length spectrum directly. Precisely we show

\begin{thm}\label{t:AlternatingTrace}
Let $G$ denote a real rank one group and $\Gamma\subset G$ a uniform lattice without torsion. Let $m=\rho_G-\alpha_0(G)\in \bbN$, then there are $m+1$ virtual representations $\eta_0,\eta_1,\ldots,\eta_m$ of $M$, with $\eta_0$ the trivial representation such that
for any even $g\in C^\infty_c(\bbR)$ we have
\begin{eqnarray*}
\lefteqn{\frac{1}{2}\sum_{\ell\in \textsc{pls}_\Gamma} \ell  \mathrm{m}^o_{\Gamma}(\ell)\sum_{j=1}^\infty \frac{ g(j \ell)}{\psi(j \ell)}}\\
&& =\sum_{q=0}^{m} (-1)^q\bigg(\sum_{\nu_k\in S_{\Gamma,\sigma}} \m_{\Gamma,\eta_q}(\nu_k)\hat{g}_{q}(i\nu_k)- \vol(\Gamma\bs G)\int_\bbR \hat{g}_{q}(\nu)\mu_{\eta_q}(\nu)d\nu\bigg),
\end{eqnarray*}
where $\hat{g}_{q}(\nu)=\hat{g}(\nu+i(m-q))+\hat{g}(\nu+i(q-m))$,
and the weight function $\psi$ is given by
\begin{equation}\label{e:weight}
\psi(\ell)=\left\lbrace\begin{array}{ll} 1 & G=\SO_0(2n+1,1)\\
2\sinh(\tfrac{\ell}{2}) & G=\SO_0(2n+2,1)\\
2\sinh(\ell) & \mbox{ otherwise}\\
\end{array}\right.
\end{equation}
\end{thm}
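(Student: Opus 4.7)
The plan is to derive the alternating trace formula by applying the $\sigma$-trace formula \eqref{e:sigmatrace} to each of the $m+1$ virtual representations $\eta_q$ with an appropriately shifted test function, and then forming an alternating sum. The key technical input is a purely algebraic identity expressing the Weyl discriminant \eqref{e:WeylDiscriminant} in terms of characters of the $\eta_q$.

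For each $q = 0, 1, \ldots, m$, I would apply \eqref{e:sigmatrace} to $\eta_q$ with the test function $g_q(\ell) := 2\cosh((m-q)\ell)g(\ell)$, which is even, compactly supported, and whose Fourier transform is precisely the $\hat{g}_q$ appearing in the statement. Forming $\sum_q(-1)^q$ of these $m+1$ identities, the right-hand side of the claimed formula becomes equal to the geometric contribution $\sum_q (-1)^q \sum_\ell g_q(\ell) L_{\Gamma,\eta_q}(\ell)$. Substituting the definition of $L_{\Gamma,\eta_q}$ from Section~\ref{s:sigmaspectra} and unfolding each conjugacy class $[\gamma]\in\Gamma^\#$ as $[\delta^j]$ for a unique primitive $[\delta]$ and $j \geq 1$ (so that $\ell_\gamma = j\ell_\delta$, $m_\gamma = m_\delta^j$, $j(\gamma) = j$) rewrites this geometric contribution as
\begin{equation*}
\frac{1}{2}\sum_{[\delta]\text{ prim}}\ell_\delta \sum_{j=1}^\infty \frac{g(j\ell_\delta)}{D(\delta^j)}\sum_{q=0}^m (-1)^q 2\cosh((m-q)j\ell_\delta)\,\overline{\chi_{\eta_q}(m_\delta^j)}.
\end{equation*}
Matching this against the target $\tfrac{1}{2}\sum_{[\delta]\text{ prim}}\ell_\delta\sum_j g(j\ell_\delta)/\psi(j\ell_\delta)$ reduces the whole theorem to the pointwise identity
\begin{equation}\label{e:identplan}
\sum_{q=0}^m (-1)^q 2\cosh((m-q)\ell)\,\chi_{\eta_q}(m_0^{-1}) = \frac{D(m_0 a_\ell)}{\psi(\ell)}
\end{equation}
which must hold for every $m_0 \in M$ and every $\ell > 0$.

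To establish \eqref{e:identplan} I would expand $D(m_0 a_\ell) = e^{\rho\ell}|\det(I - e^{-\ell}m_0^{-1})|_{\mathfrak{n}_1}|\cdot|\det(I - e^{-2\ell}m_0^{-1})|_{\mathfrak{n}_2}|$ (with the $\mathfrak{n}_2$ factor absent when $2\alpha\notin\Sigma$) using the character expansion $\det(I - e^{-c\ell}m_0^{-1})|_{\mathfrak{n}_i} = \sum_j(-e^{-c\ell})^j\chi_{\Lambda^j\mathfrak{n}_i}(m_0^{-1})$. The main point is that the trivial-character part of these expansions, namely a power of $(1-e^{-2\ell})$ coming from $\mathfrak{n}_2$ and a half-power of $(1-e^{-\ell})$ coming from the middle exterior power when $\dim\mathfrak{n}_1$ is odd, combines with a matching piece of $e^{\rho\ell}$ to produce exactly $\psi(\ell)$; the three cases in \eqref{e:weight} thus reflect whether $2\alpha$ is a root and, if not, the parity of $\dim\mathfrak{n}_1$. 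Dividing by $\psi(\ell)$ leaves a degree-$m$ polynomial in $\cosh\ell$ with $M$-character coefficients, and pairing each $e^{c\ell}$ with its companion $e^{-c\ell}$ (using the $M$-equivariant Hodge duality $\chi_{\Lambda^j\mathfrak{n}_i} = \chi_{\Lambda^{\dim\mathfrak{n}_i - j}\mathfrak{n}_i}$) assembles them into the target form $(-1)^q 2\cosh((m-q)\ell)\chi_{\eta_q}(m_0^{-1})$ and simultaneously determines the $\eta_q$ as explicit alternating combinations of exterior powers of $\mathfrak{n}_1$ and $\mathfrak{n}_2$. In particular $\eta_0$ comes from $\Lambda^0 = $ trivial, as asserted, while the top term $\eta_m$ absorbs an extra factor of $\tfrac{1}{2}$ from the degenerate ($k=0$) middle pairing.

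The main obstacle is the case-by-case verification of \eqref{e:identplan} together with the consistent identification of the $\eta_q$: the real hyperbolic cases are easiest since $\mathfrak{n}$ is abelian, while the complex, quaternionic, and octonionic hyperbolic families require careful bookkeeping of both root spaces $\mathfrak{n}_1,\mathfrak{n}_2$ and the $M$-action on each. A secondary technical point is that the trivial representation is always unramified, so when assembling the alternating sum one must apply the appropriate (``$+$'' or ramified) version of \eqref{e:sigmatrace} to each irreducible constituent of $\eta_q$, carefully tracking the resulting factors of two.
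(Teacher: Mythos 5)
Your proposal is correct and follows the same overall architecture as the paper: apply the $\sigma$-trace formula \eqref{e:sigmatrace} to each virtual representation $\eta_q$ with the shifted test function $2\cosh((m-q)\ell)g(\ell)$, unfold conjugacy classes into powers of primitives, and reduce everything to a pointwise character expansion of the Weyl discriminant, namely \eqref{e:Discrimiant}. Where you genuinely diverge is in the proof of that character identity. The paper first factors $D(\gamma)$ through the $M$-actions on $\frakn_1$ and $\frakn_2$ (Proposition \ref{p:Adjoint} and Corollary \ref{c:Discriminant}), then evaluates each factor on a maximal torus, expands $\prod_j(2\cosh\ell-2\cos\theta_j)$ in elementary symmetric functions, and identifies those as integer combinations of the fundamental characters $\chi_{\sigma_q}$ via a combinatorial lemma (Lemma \ref{l:Smk2}) proved with the Weyl character formula; the four families are then treated case by case. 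You instead expand each determinant directly as $\sum_j(-t)^j\chi_{\Lambda^j}(u)$ and pair $\Lambda^j$ with $\Lambda^{n-j}$ by Hodge duality; since $\Lambda^q$ of the standard representation \emph{is} the paper's $\sigma_q$ (respectively $\tau_q$), this reaches the same expansion while bypassing the $N(n,k)$ combinatorics entirely, and it treats all four families uniformly (for $FII$ one only needs that $M$ preserves an inner product and orientation on $\frakn_1$, which follows from $K$-invariance of $-B(X,\theta Y)$). Two caveats on the details you sketch. First, for odd $\dim\frakn_1$ the pairing $j\leftrightarrow n_1-j$ has no fixed point and produces $2\sinh((m+\tfrac12-j)\ell)$ rather than hyperbolic cosines; the factor $2\sinh(\ell/2)$ then emerges from the identity $\sinh((N+\tfrac12)\ell)/\sinh(\ell/2)=\sum_{|k|\le N}e^{k\ell}$, not from splitting off a "trivial-character part" of $\frakn_1$ (which is not an $M$-invariant splitting), so that step needs to be phrased as you do the rest of the regrouping. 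Second, your terminology is reversed: in the paper's convention the trivial representation is \emph{ramified} ($\sigma=w\sigma$); the unramified constituents requiring the "$+$" version of the trace formula occur only for $G=\SO_0(2n+1,1)$, precisely in the top term $\eta_m$ where $\Lambda^m=\sigma_m^+\oplus\sigma_m^-$. Your explicit flagging of the factor $\tfrac12$ on $\eta_m$ is welcome — the self-paired middle exponent contributes $(-1)^m\chi_{\Lambda^m}$, not $(-1)^m\cdot 2\chi_{\sigma_m}$, so $\eta_m$ is only a half-integral virtual representation as written.
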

\begin{proof}
In order to derive this alternating formula we expand the Weyl discriminant \eqref{e:WeylDiscriminant} as a sum of characters of representations of $M$.
Specifically, we will show in Proposition \ref{p:Discriminant} below that
\begin{equation}\label{e:Discrimiant} D(\gamma)=\psi(\ell_\gamma)\sum_{q=0}^{m}(-1)^q(e^{(m-q)\ell_\gamma}+e^{(q-m)\ell_\gamma})\chi_{\eta_{q}}(m_\gamma),\end{equation}
with $\eta_0,\ldots,\eta_m$ certain virtual representations of $M$ with $\eta_0$ trivial.

Now, since any $\gamma\in \Gamma$ can be written in a unique way as $\gamma=\delta^j$ with $\delta\in \Gamma$ primitive, we have
\begin{eqnarray*}
\sum_{\ell\in \pls_\Gamma}\ell \m^o_\Gamma(\ell)\sum_{j=1}^\infty \frac{g(j \ell)}{\psi(j\ell)}&=&
\sum_{\ell\in \ls_\Gamma}\frac{g(\ell)}{\psi(\ell)}\mathop{\sum_{[\gamma]}}_{\ell_\gamma=\ell}\frac{\ell }{j(\gamma)}\\
&=& \sum_{\ell\in \ls_\Gamma }\frac{g(\ell)}{\psi(\ell)}\mathop{\sum_{[\gamma]}}_{\ell_\gamma=\ell}\frac{\ell D(\gamma)}{j(\gamma)D(\gamma)}
\end{eqnarray*}
Plugging in the expansion for $D(\gamma)=\overline{D(\gamma)}$ from \eqref{e:Discrimiant} in the numerator we get
\begin{eqnarray*}
\sum_{\ell\in \pls_\Gamma}\ell \m^o_\Gamma(\ell)\sum_{j=1}^\infty \frac{g(j \ell)}{\psi(j\ell)}
&=& \sum_{q=0}^{m}(-1)^q\sum_{\ell\in \ls_\Gamma }g_q(\ell)\mathop{\sum_{[\gamma]}}_{\ell_\gamma=\ell}\frac{\ell \overline{\chi_{\eta_q}(m_\gamma)}}{j(\gamma)D(\gamma)}\\
&=& 2\sum_{q=0}^{m}(-1)^q\sum_{\ell\in \ls_\Gamma }g_q(\ell)L_{\Gamma,\eta_q}(m_\gamma)\\
\end{eqnarray*}
where $g_q(\ell)=g(\ell)(e^{(m-q)\ell}+e^{(q-m)\ell})$.
We conclude the proof by applying the trace formula attached to each of the virtual representation $\eta_q$ separately.
\end{proof}
\begin{rem}
For the odd dimensional orthogonal groups the representations $\eta_1,\ldots,\eta_m$ are actual irreducible representation.
For the other groups, it is also possible to obtain a similar formula using irreducible representations instead of virtual representations by using appropriate linear combinations of the $\hat{g}_{q}$'s.
\end{rem}
The rest of this section will be devoted to the proof of the expansion  \eqref{e:Discrimiant} for the Weyl discriminant.

\subsection{The Adjoint representation}
In order to expand the Weyl discriminant
$$D(\gamma)=e^{\rho\ell_\gamma}|\det((\Ad(m_\gamma a_{\ell_\gamma})^{-1}-I)_{|\frakn})|,$$
as a sum of characters we first need to understand  $\Ad(MA)$ and in particular its restriction to $\frakn_1$ and $\frakn_2$. Denote by $n_1=\dim\frakn_1$ and $n_2=\dim\frakn_2$. From the definition of $\frakn_1$ and $\frakn_2$, for $a_\ell=\exp(\ell H)\in A$ we have that $\Ad(a_\ell)_{|_{\frakn_1}}=e^\ell I$ and $\Ad(a_\ell)_{|_{\frakn_2}}=e^{2\ell} I$.
The following proposition describes the action of $\Ad(M)$.
\begin{prop}\label{p:Adjoint}
For the orthogonal group $\SO_0(n+1,1),\;M\cong \SO(n)$ and the restriction of $\Ad(M)$ to $\mathfrak{n}=\frakn_1$ is the natural isomorphism.
For the unitary and symplectic groups the restriction of $\Ad(M)$ to $\mathfrak{n}_j,\; j=1,2$ gives a homomorphism  $\iota_j:M\to \SO(n_j)$.
For $G=FII,\; M=\mathrm{Spin}(7)$ and the restriction of $\Ad(M)$ to $\frakn_1$ and $\frakn_2$ is the $8$-dimensional spin representation and the $7$-dimensional orthogonal representation respectively.
\end{prop}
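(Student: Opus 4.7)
The plan is to prove the proposition case by case using an explicit geometric model for each rank-one group. In every case the root spaces $\frakn_1,\frakn_2$ carry an $\Ad(M)$-invariant inner product (inherited from the Killing form), so the image of $\Ad(M)$ on $\frakn_j$ automatically lies in $O(n_j)$; since $M$ is connected for each of the four rank-one $G$, the image refines to $\SO(n_j)$. The content of the proposition is then the identification of this representation.

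For $G = \SO_0(n+1,1)$ I use the Lorentz model on $\bbR^{n+2}$: with $K = \SO(n+1)$ the stabilizer of a timelike unit vector and $A$ the boost in a fixed timelike-spacelike plane, the centralizer $M = \SO(n)$ acts naturally on the orthogonal complement $\bbR^{n}$, and this complement is canonically identified with the abelian $\frakn = \frakn_1$, giving the standard representation.

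For $G = \SU(n+1,1)$ and $G = \Sp(n,1)$ I use the Hermitian (respectively quaternion-Hermitian) model in which $G$ preserves a form of signature $(n+1,1)$ on $\bbC^{n+2}$ (resp.\ $\bbH^{n+1}$). The nilpotent radical $\frakn$ is then (up to scaling) the standard (quaternionic) Heisenberg algebra, with $\frakn_1$ equivariantly identified with $\bbC^{n}$ (resp.\ $\bbH^{n-1}$) and $\frakn_2$ with $\mathrm{Im}(\bbC)$ (resp.\ $\mathrm{Im}(\bbH))$, the bracket $\frakn_1\times\frakn_1\to\frakn_2$ being the imaginary part of the ambient inner product. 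The action of $M = U(n)$ (resp.\ $\Sp(n-1)\times\Sp(1)$) on these spaces is then read off directly: on $\frakn_1$ it is the defining representation viewed as an orthogonal representation over $\bbR$, and on $\frakn_2$ it factors through conjugation by the center/$\Sp(1)$-factor, producing the asserted $\iota_j \colon M\to\SO(n_j)$.

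For $G = FII$ the symmetric space is the Cayley hyperbolic plane $\bbO H^2$ with $K = \mathrm{Spin}(9)$, $M = \mathrm{Spin}(7)$, $\frakn_1 \cong \bbO$ of dimension $8$ and $\frakn_2 \cong \mathrm{Im}(\bbO)$ of dimension $7$. The key input is the classical description of $\mathrm{Spin}(7)\hookrightarrow\mathrm{Spin}(9)$ as the stabilizer of a unit spinor in the $16$-dimensional real spin representation of $\mathrm{Spin}(9)$; under this embedding the $\mathrm{Spin}(9)$-spin representation breaks as the $8$-dimensional $\mathrm{Spin}(7)$-spin representation plus the trivial line through the fixed spinor, while the $9$-dimensional vector representation of $\mathrm{Spin}(9)$ restricts to the trivial line plus the $7$-dimensional vector representation of $\mathrm{Spin}(7)$. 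Matching these branching patterns with the Iwasawa decomposition of $\frakg = \mathrm{Lie}(F_4^{-20})$ identifies $\frakn_1$ with the spin representation of $\mathrm{Spin}(7)$ and $\frakn_2$ with its vector representation. The main obstacle is precisely this exceptional case: for the classical groups everything reduces to direct matrix computations in the Hermitian models, but here one must invoke the structure theory of $F_4^{-20}$ and the spinorial description of the embedding $\mathrm{Spin}(7)\subset\mathrm{Spin}(9)$ from the literature on real forms of exceptional Lie algebras.
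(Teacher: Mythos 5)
For the classical groups your route is essentially the paper's: one writes down explicit models for $\fraka$, $\frakn_1$, $\frakn_2$ and $M$, and the observation that $\Ad(M)$ preserves a positive definite inner product on each root space (plus connectedness of $M$) lands the image in $\SO(n_j)$. One caveat: the action of $M$ on $\frakn_1$ is \emph{not} the defining representation of $U(n)$ (resp.\ $\Sp(n-1)$). In the matrix models it is $X_v\mapsto X_{x^*uv}$ (resp.\ $X_v\mapsto X_{uvx^*}$), i.e.\ the defining representation twisted by the central $U(1)$ (resp.\ by the $\Sp(1)$ factor); for $\SU(3,1)$ this map even has nontrivial kernel, as the paper points out in a remark. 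Since the proposition only asserts that one gets \emph{some} homomorphism $\iota_j:M\to\SO(n_j)$, this imprecision does not break your proof of the classical cases, but your identification of the representation is inaccurate.

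The genuine gap is in the $FII$ case: your branching rules are dimensionally inconsistent. The spin representation of $\mathrm{Spin}(9)$ is $16$-dimensional, so it cannot restrict to the spinor-stabilizer $\mathrm{Spin}(7)$ as ``the $8$-dimensional spin representation plus the trivial line through the fixed spinor'' ($8+1=9\neq 16$); likewise the $9$-dimensional vector representation cannot restrict as ``a trivial line plus the $7$-dimensional vector representation'' ($1+7=8\neq 9$). Writing $\Delta_k$ for the spin representation and $V_k$ for the vector representation, the correct branchings for this embedding are $\Delta_9|_{\mathrm{Spin}(7)}\cong \mathbf{1}\oplus V_7\oplus\Delta_7$ and $V_9|_{\mathrm{Spin}(7)}\cong\mathbf{1}\oplus\Delta_7$, as one checks by factoring through $\mathrm{Spin}(8)$ and using triality. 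With the corrected branching your strategy does work, and the clean way to run it is: $\frakp\cong\Delta_9$ as a $K$-module, $M=Z_K(A)$ fixes $H\in\fraka\subset\frakp$ and is therefore the spinor stabilizer $\mathrm{Spin}(7)$, the Iwasawa projection gives $\frakp\cong\fraka\oplus\frakn_1\oplus\frakn_2$ as $M$-modules ($16=1+8+7$), and matching with $\mathbf{1}\oplus V_7\oplus\Delta_7$ by dimension forces $\frakn_1\cong\Delta_7$ and $\frakn_2\cong V_7$. The paper instead avoids branching altogether: it notes that the only irreducible representations of $\mathrm{Spin}(7)$ of dimension at most $8$ are $\mathbf 1$, $V_7$ and $\Delta_7$, rules out trivial subrepresentations of $\frakn_j$ because the root spaces for the Cartan subalgebra $\mathfrak t\oplus\fraka$ are one-dimensional, and excludes $\frakn_1\cong\mathbf 1\oplus V_7$ since the center of $M$ would then act trivially on all of $\frakn$. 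Either argument is acceptable, but as written yours does not go through and the branchings must be repaired.
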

\begin{proof}
For the orthogonal group this is clear.
For the unitary group $G=\SU(n+1,1)$, we write its Lie algebra as
\[\mathfrak{g}=\set{\begin{pmatrix} a & b\\b^* & d\end{pmatrix}| a\in \Mat_{n+1}(\bbC),b\in \bbC^{n+1}, a^*=-a,d=-\Tr(a)},\]
and the Lie algebras in the Cartan decomposition $\frakg=\frakp\oplus\frakk$ are given by $\mathfrak{p}=\set{\begin{pmatrix} 0 & b\\b^* & 0\end{pmatrix}\in \mathfrak{g}}$,
and $\frakk=\set{\begin{pmatrix} a & 0\\0 & d\end{pmatrix}\in \mathfrak{g}}$.
We write any $b\in \bbC^{n+1}$ as $b=\begin{pmatrix} v\\ c\end{pmatrix}$ with $v\in \bbC^{n}$ and $c\in \bbC$.
Then $\mathfrak{a}\subset \frakp$ is the real subspace
$\fraka=\set{\begin{pmatrix} 0 & b\\b^* & 0\end{pmatrix}|b=\begin{pmatrix}0\\c\end{pmatrix},\; c\in \bbR}$
and the root spaces $\frakn_1$ and $\frakn_2$ are given by
\[\mathfrak{n}_1=\set{X_v=\begin{pmatrix} 0 & v & v\\ -v^*& 0 & 0\\ v^* &0 &0\end{pmatrix}\bigg| v\in \bbC^{n}},\]
of dimension $n_1=2n$ and
\[\mathfrak{n}_2=\set{\begin{pmatrix} 0 & 0 & 0\\ 0 & -d & d\\ 0 & -d & d \end{pmatrix}\bigg| d\in \bbC, d+d^*=0},\]
which is one dimensional.
We get that $M$, the centralizer of $A$ in $K$, is given by
\[M=\set{\begin{pmatrix} u & 0 & 0\\ 0 & x & 0\\ 0 &0 & x\end{pmatrix}\bigg| u\in \mathrm{U}(n),\;x\in U(1), \; x^2\det(u)=1}.\]
A simple computation then shows that the adjoint action of $M$ is trivial on $\frakn_2$ and that
$\Ad(m)X_v=m X_v m^{-1}=X_{x^*uv}$. Fixing the standard basis $X_{e_j},X_{ie_j},\;j=1,\ldots,n$ for $\frak{n}_1$ (recalling the natural inclusion $U(n)\subseteq \SO(2n)$) we get a homomorphism from $M$ to $\SO(2n)$.

For $G=\Sp(n,1)$ repeating the same arguments replacing $\bbC$ with the quaternions $\bbH$ we get
\[\mathfrak{n}_1=\set{X_v=\begin{pmatrix} 0 & v & v\\ -v^*& 0 & 0\\ v^* &0 &0\end{pmatrix}\bigg| v\in \bbH^{n-1}},\]
of dimension $n_1=4(n-1)$,
 \[\mathfrak{n}_2=\set{Y_d=\begin{pmatrix} 0 & 0 & 0\\ 0 & -d & d\\ 0 & -d & d \end{pmatrix}\bigg| d\in \bbH, d+d^*=0},\]
of dimension $n_2=3$, and
\[M=\set{\begin{pmatrix} u & 0 & 0\\ 0 & x & 0\\ 0 &0 & x\end{pmatrix}\bigg| u\in \Sp(n-1),\;x\in \Sp(1)\; \det(u)x^2=1}.\]
On $\frakn_1$ we have $\Ad(m)X_v=X_{uvx^*}$ and fixing a suitable basis for $\bbH^n\cong\bbC^{2n}\cong \bbR^{4n}$ 
gives a homomorphism from $M$ to $\SO(4(n-1))$. On $\frakn_2$ the action is given by $\Ad(m)Y_d=Y_{xdx^*}$
and choosing a suitable basis this gives a homomorphism into $\SO(3)$.

Finally, for $G=FII$ we have that $\frakn_1$ is $8$-dimensional and $\frakn_2$ is $7$-dimensional. The smallest irreducible representation of $M=\rm{Spin}(7)$ is the orthogonal representation which is $7$-dimensional and the only $8$-dimensional irreducible representation is the spin representation.
Let $\mathfrak{t}$ denote a maximal commutative subspace of $\frakm$ so that $\mathfrak{h}=\mathfrak{t}\oplus \fraka$ is a Cartan algebra for $\frakg$.
Since the root spaces for $\mathfrak{h}$ are one dimensional, any subspace of $\frakn_j$ on which $\ad(\mathfrak{t})$ acts trivially is at most one dimensional.
Consequently, the action of $\ad(\frakm)$ is not trivial on $\frakn_1$ and $\frakn_2$, and hence the restriction of $\Ad(M)$ to $\frakn_2$ gives the orthogonal representation and the restriction to $\frakn_1$ is either the spin representation or a direct sum of the orthogonal and trivial representation.
Finally, we note that the latter cannot hold as it would imply that the center of $M$ acts trivially on $\frakn$.

\end{proof}
\begin{rem}
For the unitary and symplectic groups the homomorphism $\iota_1:M\to\SO(n_1)$ is not the standard inclusion $\mathrm{U}(n)\subset \SO(2n)$ (respectively, $\Sp(n-1)\subseteq \SO(4(n-1)$). In particular, when $n=2$ this homomorphism has a nontrivial kernel. The homomorphism $\iota_2$ is trivial for the unitary group and factors through the natural isomorphism $\Sp(1)\cong \SO(3)$ for the symplectic group.
\end{rem}

\begin{cor}\label{c:Discriminant}
The Weyl discriminant can be factored as
\[D(\gamma)=|\det(e^{-\ell_\gamma/2}I_{n_1}-e^{\ell_\gamma/2}\iota_1(m_\gamma) )||\det(e^{-\ell_\gamma}I_{n_2}-e^{\ell_\gamma}\iota_2(m_\gamma) )|.\]
\end{cor}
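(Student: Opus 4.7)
The plan is a direct determinant computation from the definition $D(\gamma)=e^{\rho\ell_\gamma}|\det((\Ad(m_\gamma a_{\ell_\gamma})^{-1}-I)_{|\frakn})|$, using the root-space decomposition $\frakn=\frakn_1\oplus\frakn_2$ together with the structural information gathered in Proposition \ref{p:Adjoint}.

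First I would observe that $\Ad(m_\gamma a_{\ell_\gamma})^{-1}-I$ preserves each root space, so the absolute determinant factors as a product over $j=1,2$. On $\frakn_j$ the element $a_{\ell_\gamma}$ acts by the scalar $e^{j\ell_\gamma}$, while by Proposition \ref{p:Adjoint} the adjoint action of $m_\gamma$ is the orthogonal operator $\iota_j(m_\gamma)\in\SO(n_j)$ (in the $FII$ case $\iota_1$ is the $8$-dimensional spin representation, which is still orthogonal). Hence the $j$-th factor equals $|\det(e^{-j\ell_\gamma}\iota_j(m_\gamma)^{-1}-I_{n_j})|$.

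Next I would rewrite
\[
e^{-j\ell_\gamma}\iota_j(m_\gamma)^{-1}-I_{n_j}=\iota_j(m_\gamma)^{-1}\bigl(e^{-j\ell_\gamma}I_{n_j}-\iota_j(m_\gamma)\bigr),
\]
and, using that $\iota_j(m_\gamma)\in\SO(n_j)$ has determinant of modulus one, discard the leading factor to obtain $|\det(e^{-j\ell_\gamma}I_{n_j}-\iota_j(m_\gamma))|$. Distributing a scalar $e^{j\ell_\gamma/2}$ symmetrically into this matrix then yields the identity
\[
e^{j\ell_\gamma n_j/2}\bigl|\det(e^{-j\ell_\gamma}I_{n_j}-\iota_j(m_\gamma))\bigr| = \bigl|\det(e^{-j\ell_\gamma/2}I_{n_j}-e^{j\ell_\gamma/2}\iota_j(m_\gamma))\bigr|.
\]

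Taking the product over $j=1,2$, I would check that the accumulated prefactor $e^{\ell_\gamma(n_1/2+n_2)}$ is exactly $e^{\rho\ell_\gamma}$ since $\rho=(n_1+2n_2)/2$, and this absorbs the original prefactor in the definition of $D(\gamma)$, giving the claimed factorization. There is no substantive obstacle here; the argument is a clean bookkeeping of determinantal identities, with the only nontrivial input being Proposition \ref{p:Adjoint}, which has already been established.
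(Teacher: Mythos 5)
Your computation is correct and is precisely the bookkeeping the paper leaves implicit: the corollary is stated without proof as an immediate consequence of Proposition \ref{p:Adjoint}, and your steps (block-diagonality over $\frakn_1\oplus\frakn_2$, factoring out $\iota_j(m_\gamma)^{-1}$ of unit modulus, and absorbing $e^{\rho\ell_\gamma}=e^{(n_1+2n_2)\ell_\gamma/2}$ by symmetrizing each determinant) are exactly the intended argument. No issues.
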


\subsection{Representations of the orthogonal groups}
Next, we want to expand each one of these determinants as a linear combination of irreducible characters.
To do this we recall some facts about the representation theory of the orthogonal groups that we will need.
We refer to \cite[Chapter V]{Knapp02} for background and more details on representation theory of compact groups.

The irreducible representations of the orthogonal group are parameterized by their highest weights: When $n=2m$ the possible highest weighs are
\[\widehat{\mathrm{so}}(2m)=\{\sum_ja_je_j:\;a_1\geq \ldots\geq a_{n-1}\geq |a_n|, a_i-a_j\in \bbZ,\;2a_j\in \bbZ\},\] and when $n=2m+1$ they are
\[\widehat{\mathrm{so}}(2m+1)=\{\sum_ja_je_j:\;a_1\geq \ldots\geq a_{n}\geq 0, a_i-a_j\in \bbZ,\;2a_j\in \bbZ\}.\]
There is a one to one correspondence between these highest weights and the irreducible representations of the simply connected Lie group $\mathrm{Spin}(n)$; a representation of $\mathrm{Spin}(n)$ factors through a representation of $\SO(n)$ if and only if all the coefficient are integral.

For any $\theta\in (\bbZ/2\pi \bbZ)^m$ let
\[u_\theta=\begin{pmatrix} R(\theta_1) & &\\& \ddots &\\
& & R(\theta_m)\end{pmatrix}\in \SO(2m),\]
with $R(\theta_j)=\left(\begin{smallmatrix} \cos(\theta_j) &\sin(\theta_j)\\ -\sin(\theta_j)& \cos(\theta_j)\end{smallmatrix}\right)$.
When $n=2m$ is even, the map $\theta\mapsto u_\theta$ is an isomorphism of $(\bbZ/2\pi \bbZ)^m$ with the maximal torus of $\SO(2m)$. When $n=2m+1$ is odd this isomorphism is given by  $\theta\mapsto \tilde{u}_\theta= \smatrix{u_\theta}{0}{0}{1}$.

For any weight $\lambda$, let $\chi_\lambda$ denote the character of the irreducible representation of highest weight $\lambda$ (that we may think of as a function on $(\bbZ/2\pi\bbZ)^m$ by restriction to the maximal torus).
We recall the Weyl character formula:
\begin{equation}\label{e:WeylCharacter}
\chi_\lambda(\theta)=\frac{1}{D(\theta)}\sum_{w\in W}\sgn(w)\xi_{w(\lambda+\delta)}(\theta),\end{equation}
where $\xi_{\lambda}(\theta)=\exp(i\sum_j a_j\theta_j)$, $W$ is the Weyl group of $\SO(n)$, $D$ is the Weyl discriminant of $\SO(n)$ given by
\[D(\theta)=\xi_{\delta}(\theta)\prod_{\alpha\in \Delta^+}(1-\xi_{-\alpha}(\theta))=\sum_{w\in W}\sgn(w)\xi_{w\delta}(\theta),\]
and $\delta=\tfrac{1}{2}\sum_{\alpha\in \Delta^+}\alpha$ is half the sum of the positive roots.


For any fixed $\ell\in \bbR$ consider the function
\[F_{n}(\ell,\cdot):\SO(n)\to\bbR,\quad F(\ell,u)=|\det(e^{-\ell/2}I_{n}-e^{\ell/2}u)|.\]
This is clearly a class function on $\SO(n)$ and we can write it as a linear combination of irreducible characters.
\begin{prop}\label{p:Deven}
When $n=2m+1$ is odd
\begin{equation}\label{e:FluOdd}
F_{n}(\ell,u)= 2\sinh(\tfrac{\ell}{2})\sum_{q=0}^{m}(-1)^q\left( \sum_{k=q-m}^{m-q}e^{k\ell}\right)\chi_{\tau_q}(u)
\end{equation}
where $\tau_q$ denotes the irreducible representation of $\SO(2m+1)$ of highest weight $e_1+\ldots+e_q$

When $n=2m$ is even
\begin{eqnarray}\label{e:FluEven}
F_n(\ell,u)&=& \sum_{q=0}^{m} (-1)^q(e^{(m-q)\ell}+e^{(q-m)\ell})  \chi_{\sigma_{q}}(u)\\
\nonumber &=&\sum_{q=0}^{m}(-1)^q\left( \sum_{k=q-m}^{m-q}e^{k\ell}\right)\chi_{\tau_q}(u)
\end{eqnarray}
where $\sigma_q,\;0\leq q<m$ denotes the irreducible representation of $\SO(2m)$ of highest weight $e_1+\ldots+e_q$, $\sigma_m=\sigma_m^+\oplus \sigma_m^-$ with $\sigma_m^\pm$ of highest weight $e_1+\ldots+ e_{m-1} \pm e_m$. In the second line, $\chi_{\tau}$ is evaluated at $u\in \SO(2m)$ via the natural inclusion of $\SO(2m)\subseteq\SO(2m+1)$.

\end{prop}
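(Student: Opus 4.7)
The plan is to verify the identity on a maximal torus of $\SO(n)$, where $F_n(\ell,\cdot)$ can be computed from the eigenvalues, and then to reorganize the resulting polynomial in $e^{\pm\ell}$ as a linear combination of irreducible characters. Since $F_n(\ell,\cdot)$ is a class function, it suffices to evaluate at $u_\theta$ (resp.\ $\tilde u_\theta$). For $u_\theta\in\SO(2m)$ with eigenvalues $\{e^{\pm i\theta_j}\}_{j=1}^m$, a direct computation gives
\[F_{2m}(\ell,u_\theta)=\prod_{j=1}^m(e^{-\ell/2}-e^{\ell/2}e^{i\theta_j})(e^{-\ell/2}-e^{\ell/2}e^{-i\theta_j})=\prod_{j=1}^m(2\cosh\ell-2\cos\theta_j),\]
which is non-negative, hence equals $\det(e^{-\ell/2}I-e^{\ell/2}u_\theta)$. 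For $\tilde u_\theta\in\SO(2m+1)$ the additional eigenvalue $1$ contributes a factor $e^{-\ell/2}-e^{\ell/2}=-2\sinh(\ell/2)$, so $F_{2m+1}(\ell,\tilde u_\theta)=2\sinh(\ell/2)\,F_{2m}(\ell,u_\theta)=-\det(e^{-\ell/2}I-e^{\ell/2}\tilde u_\theta)$; this accounts for the $2\sinh(\ell/2)$ prefactor in \eqref{e:FluOdd}.

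Next, applying $\det(e^{-\ell/2}I-e^{\ell/2}u)=(e^{\ell/2})^n\det(e^{-\ell}I-u)$ and expanding the characteristic polynomial yields
\[\det(e^{-\ell/2}I-e^{\ell/2}u)=\sum_{q=0}^n(-1)^q e^{(q-n/2)\ell}\chi_{\Lambda^q V}(u),\]
where $V$ denotes the standard $n$-dimensional representation. Since $V$ is self-dual for $\SO(n)$ and $\det V$ is trivial, Hodge duality gives $\Lambda^qV\cong\Lambda^{n-q}V$ as $\SO(n)$-modules, so $\chi_{\Lambda^{n-q}V}=\chi_{\Lambda^qV}$. I then pair the $q$-th term with the $(n-q)$-th term. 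For $n=2m+1$ the two signs are opposite, so the pair produces $(-1)^{q+1}\cdot 2\sinh((2(m-q)+1)\ell/2)\chi_{\tau_q}$, after identifying $\tau_q=\Lambda^qV$ for $q\leq m$ (irreducible with highest weight $e_1+\cdots+e_q$); applying $2\sinh((2k+1)\ell/2)=2\sinh(\ell/2)\sum_{j=-k}^ke^{j\ell}$ and using $F_{2m+1}=-\det$ recovers \eqref{e:FluOdd}. For $n=2m$ the signs match, producing $(-1)^q(e^{(m-q)\ell}+e^{(q-m)\ell})\chi_{\Lambda^qV}$ for $q<m$, while the self-paired term $q=m$ contributes $(-1)^m\chi_{\Lambda^mV}$; identifying $\Lambda^qV=\sigma_q$ for $q<m$ and $\Lambda^mV=\sigma_m^+\oplus\sigma_m^-=\sigma_m$ yields the first expression in \eqref{e:FluEven}.

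To deduce the second expression in \eqref{e:FluEven}, expressed through characters of $\SO(2m+1)$ via the inclusion $\SO(2m)\hookrightarrow\SO(2m+1)$, I will use the branching rule: the standard representation $W$ of $\SO(2m+1)$ restricts to $V\oplus\mathbf{1}$, so $\tau_q|_{\SO(2m)}=\Lambda^qV\oplus\Lambda^{q-1}V$ and hence $\chi_{\tau_q}(u_\theta)=\chi_{\sigma_q}(u_\theta)+\chi_{\sigma_{q-1}}(u_\theta)$. Setting $S_q=\sum_{k=q-m}^{m-q}e^{k\ell}$, the telescoping identity $S_q-S_{q+1}=e^{(m-q)\ell}+e^{(q-m)\ell}$ then transforms the second expression into the first. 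The main technical point is the correct identification of the middle exterior power $\Lambda^mV$ as the reducible sum $\sigma_m^+\oplus\sigma_m^-$ and the delicate bookkeeping of the self-paired $q=m$ term; the Hodge duality $\Lambda^qV\cong\Lambda^{n-q}V$ and the branching rule coming from $W|_{\SO(2m)}=V\oplus\mathbf{1}$ are otherwise standard facts from the representation theory of orthogonal groups.
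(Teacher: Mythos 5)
Your proof is correct, and it takes a genuinely different route from the paper. The paper evaluates $F_{2m}(\ell,\cdot)$ on the torus, expands the product $\prod_j(2\cosh\ell-2\cos\theta_j)$ in the symmetric functions $S_{m,k}$, and then decomposes each $S_{m,k}$ into irreducible characters via a combinatorial lemma (Lemma \ref{l:Smk2}), whose proof requires a careful Weyl-character-formula analysis of which weights survive, together with a recursion for the counting numbers $N(n,k)$. You instead write $\det(e^{-\ell/2}I-e^{\ell/2}u)=\sum_q(-1)^qe^{(q-n/2)\ell}\chi_{\Lambda^qV}(u)$, invoke Hodge duality $\Lambda^qV\cong\Lambda^{n-q}V$ to fold the sum, and use the standard identifications $\Lambda^qV\cong\tau_q$ (resp.\ $\sigma_q$, with $\Lambda^mV\cong\sigma_m^+\oplus\sigma_m^-$ in the even case) plus the branching rule $\tau_q|_{\SO(2m)}=\Lambda^qV\oplus\Lambda^{q-1}V$. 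This replaces all of the paper's combinatorics with standard structural facts about exterior powers of the standard representation, and is cleaner; the paper's approach is more self-contained but much longer.

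One point you should not gloss over: your self-paired term at $q=m$ in the even case is $(-1)^m\chi_{\Lambda^mV}=(-1)^m\chi_{\sigma_m}$, whereas the first line of \eqref{e:FluEven} as printed has $(-1)^m(e^0+e^0)\chi_{\sigma_m}=2(-1)^m\chi_{\sigma_m}$. These differ by a factor of $2$, so you have not literally ``recovered the first expression in \eqref{e:FluEven}'' as you claim. In fact your value is the correct one: for $m=1$ one has $F_2(\ell,u_\theta)=2\cosh\ell-2\cos\theta$ while the printed formula gives $2\cosh\ell-4\cos\theta$, and the paper's own closing identity $\sum_j(-1)^j(2\cosh\ell)^{k-2j}N(k,j)=2\cosh(k\ell)$ fails at $k=0$ since $N(0,0)=1\neq 2$. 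Your telescoping derivation of the first line from the (correct) second line likewise produces $(-1)^m\chi_{\sigma_m}$ at the boundary $q=m$. So the discrepancy is a defect of the stated proposition, not of your argument; you should say explicitly that the $q=m$ coefficient in the first line of \eqref{e:FluEven} should read $(-1)^m$ rather than $2(-1)^m$, rather than asserting agreement with the printed formula.
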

\begin{proof}
We first treat the even case.
Evaluating $F_{2m}(\ell,\cdot)$ at $u_\theta$ we get
\begin{eqnarray*}
F_{2m}(\ell, u_\theta)&=& |\det(e^{-\ell/2}I_{2m}+e^{\ell/2}u_\theta)|\\
&=& \prod_{j=1}^m\big(2\cosh(\ell)-2\cos(\theta_j)\big)\\
&=& \sum_{k=0}^m(-1)^k (2\cosh(\ell))^{m-k}S_{m,k}(\theta)
\end{eqnarray*}
where
\begin{equation}\label{e:Smk}
S_{m,k}(\theta)=\sum_{1\leq j_1\leq\ldots\leq j_k\leq m}\left(\prod_{i=1}^k 2\cos(\theta_{j_i})\right).
\end{equation}

We can think of the functions $S_{m,k}$ as class functions on $\SO(2m)$ and write them as a linear combination of irreducible characters. To do this, for any $n,k\in \bbN$ with $2k\leq n$ we define
\begin{equation}\label{e:N0} N_0(n,k)=\#\{(j_1,\ldots,j_k)\in \{1,\ldots,n-1\}^k|j_{i+1}\geq j_i+2\}\end{equation}
\begin{equation}\label{e:N1}
N(n,k)=\left\lbrace\begin{array}{cc} 1 & k=0\\ N_0(n,k)+N_0(n-2,k-1) & k\geq 1\end{array}\right.
\end{equation}
We show in Lemma \ref{l:Smk2} below that
\begin{equation} \label{e:Smk2}
S_{m,k}=\sum_{j=0}^{[k/2]}(-1)^j N(m+2j-k,j)\chi_{\sigma_{k-2j}}.
\end{equation}
Plugging this in the above expression we get
\begin{eqnarray*}
\lefteqn{F_{2m}(\ell,u_\theta)=\sum_{k=0}^m(-1)^k (2\cosh(\ell))^{m-k}S_{m,k}}\\
&&=\sum_{k=0}^m\sum_{j=0}^{[k/2]} (-1)^{k+j} (2\cosh(\ell))^{m-k} N(m+2j-k,j)\chi_{\sigma_{k-2j}}\\
&&=(-1)^m\sum_{k=0}^{m}\left(\sum_{j=0}^{[k/2]}(-1)^{j-k} (2\cosh(\ell))^{k-2j}N(k,j)\right)\chi_{\sigma_{m-k}}
\end{eqnarray*}
The result now follows from the identity
\[\sum_{j=0}^{[k/2]}(-1)^{j} (2\cosh(\ell))^{k-2j}N(k,j)=2\cosh(k\ell),\]
which can be easily proved by induction from the recursion relation
\begin{equation}\label{e:Nrecursion}
N(n,k)=N(n-1,k)+N(n-2,k-1),\quad \forall\; 1\leq k\leq \frac{n-1}{2},
\end{equation}
together with the simple observation that $N(2k,k)=2$ for all $k\geq 1$.

The second line of \eqref{e:FluEven} is a consequence of the relation
\begin{equation}\label{e:KMrelation}
\chi_{\tau_k}=\left\lbrace\begin{array}{ll} \chi_{\sigma_0} & k=0\\
\chi_{\sigma_k}+\chi_{\sigma_{k-1}}
 & 1\leq k\leq m\\
\end{array}\right..
\end{equation}
which follows from the decomposition of the restriction of $\tau_k$ to $\SO(2m)$ into irreducible representations of $\SO(2m)$.

Now for the odd case, evaluating $F_{2m+1}(\ell,\cdot)$ on $\tilde{u}_\theta=\begin{pmatrix} u_\theta &0\\ 0& 1\end{pmatrix}$ gives
\begin{eqnarray*}
F_{2m+1}(\ell, \tilde{u}_\theta)&=& |\det(e^{-\ell/2}I_{2m+1}-e^{\ell/2}\tilde{u}_\theta)|\\
&=& 2\sinh(\ell/2)\prod_{j=1}^m\big(2\cosh(\ell)-2\cos(\theta_j)\big)\\
&=& 2\sinh(\ell/2)F_{2m}(\ell, u_\theta),
\end{eqnarray*}
and the result follows directly from the even case.
We note that using \eqref{e:KMrelation} we can also write
\begin{eqnarray*}
F_{2m+1}(\ell, \tilde{u}_\theta)
&=& 2\sinh(\ell/2)\sum_{q=0}^{m} (-1)^q(e^{(m-q)\ell}+e^{(q-m)\ell})  \chi_{\eta_{q}}(u)
\end{eqnarray*}
with $\eta_q$ the virtual representation of $\SO(2m+1)$ whose restriction to $\SO(2m)$ is $\sigma_q$.
\end{proof}

We still need to prove the identity \eqref{e:Smk2} and the recursion \eqref{e:Nrecursion}.
\begin{lem}
The combinatorial terms $N(n,k)$ defined in \eqref{e:N1} satisfy
\[N(n,k)=N(n-1,k)+N(n-2,k-1),\]
for all $1\leq k\leq \frac{n-1}{2}$.
\end{lem}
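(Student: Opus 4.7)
The plan is to first establish the analogous recursion for the auxiliary count $N_0(n,k)$ by a direct combinatorial argument, and then to derive the desired recursion for $N(n,k)$ by assembling two instances of the $N_0$-recursion.

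\textbf{Step 1: Recursion for $N_0$.} I first claim that
\begin{equation*}
N_0(n,k)=N_0(n-1,k)+N_0(n-2,k-1)
\end{equation*}
for all $1\leq k\leq (n-1)/2$. This is proved by partitioning the tuples $(j_1<\cdots<j_k)$ in $\{1,\ldots,n-1\}^k$ satisfying $j_{i+1}\geq j_i+2$ according to whether $j_k=n-1$ or $j_k\leq n-2$. The tuples with $j_k\leq n-2$ are exactly the tuples counted by $N_0(n-1,k)$. On the other hand, if $j_k=n-1$, the spacing condition forces $j_{k-1}\leq n-3$, so $(j_1,\ldots,j_{k-1})$ ranges over the tuples counted by $N_0(n-2,k-1)$. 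Adding the two contributions gives the claim.

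\textbf{Step 2: Recursion for $N$.} Using the definition $N(n,k)=N_0(n,k)+N_0(n-2,k-1)$ together with the Step 1 recursion applied to $N_0(n,k)$, I get
\begin{equation*}
N(n,k)=N_0(n-1,k)+N_0(n-2,k-1)+N_0(n-2,k-1)=N_0(n-1,k)+2N_0(n-2,k-1).
\end{equation*}
On the other hand, the right-hand side of the desired recursion unfolds as
\begin{equation*}
N(n-1,k)+N(n-2,k-1)=N_0(n-1,k)+N_0(n-3,k-1)+N_0(n-2,k-1)+N_0(n-4,k-2),
\end{equation*}
where for $k=1$ the terms with negative second argument are replaced by the boundary value $N_0(\cdot,0)=1$ as appropriate.

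\textbf{Step 3: Matching the two expressions.} The two expressions agree if and only if
\begin{equation*}
N_0(n-2,k-1)=N_0(n-3,k-1)+N_0(n-4,k-2),
\end{equation*}
which is exactly the Step 1 recursion applied to the pair $(n-2,k-1)$. This pair lies in the valid range as soon as $1\leq k-1\leq (n-3)/2$, i.e., $k\geq 2$. The boundary case $k=1$ is checked separately: both sides evaluate to $n$, using $N_0(n,1)=n-1$ and $N_0(n,0)=1$.

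\textbf{Anticipated difficulty.} There is no substantial obstacle here; the only subtle point is keeping track of the boundary cases $k=1$ and small $n$, where the definition of $N$ requires the convention $N(\cdot,0)=1$ and one must verify that the auxiliary quantities $N_0(n-3,0)$ and $N_0(n-4,-1)$ are handled consistently with the recursion. Once those conventions are fixed, the proof reduces to the one-line combinatorial identity of Step 1 applied twice.
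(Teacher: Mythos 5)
Your proof is correct. It differs from the paper's argument in the choice of decomposition: you condition on the last index $j_k$ (is it $n-1$ or not?) and thereby obtain the two-term recursion $N_0(n,k)=N_0(n-1,k)+N_0(n-2,k-1)$ directly, after which the recursion for $N$ follows by unfolding the definition on both sides and invoking the $N_0$-recursion once more at $(n-2,k-1)$. The paper instead conditions on the first index $j_1$, which yields the full-history identity $N_0(n,k)=\sum_{j=1}^{n-2k+1}N_0(n-j-1,k-1)$; it then lifts this to $N(n,k)=\sum_{j=2(k-1)}^{n-2}N(j,k-1)$ and obtains the two-term recursion by subtracting the instance at $n-1$ from the one at $n$ (a telescoping step). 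Your route is somewhat cleaner in that it produces the two-term recursion in one stroke for $N_0$ and never needs the summed form; it also forces you to be explicit about the boundary conventions $N_0(\cdot,0)=1$ and the separate check at $k=1$, which you handle correctly (both sides equal $n$). The paper's route has the minor advantage that the summed identity $N(n,k)=\sum_j N(j,k-1)$ is reused implicitly in spirit elsewhere, but for the lemma itself the two arguments are of equal strength and both are complete.
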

\begin{proof}
In the definition of $N_0(n,k)$, for any choice of $j_1\in \{1,\ldots, n-2k+1\}$ there are $N_0(n-j-1,k-1)$ choices for $j_2,\ldots,j_k$, leading to the recursion relation
\[N_0(n,k)=\sum_{j=1}^{n-2k+1}N_0(n-j-1,k-1).\]
Now, for $k=1$, $N(n,1)=n=n-1+1=N(n-1,1)+N(n-2,0)$ is trivial and for $k\geq 2$  substituting $N(n,k)=N_0(n,k)+N_0(n-1,k-2)$ we get
\[N(n,k)= \sum_{j=1}^{n-2k+1}N(n-j-1,k-1)=\sum_{j=2(k-1)}^{n-2}N(j,k-1),\]
implying that $N(n,k)-N(n-1,k)=N(n-2,k-1)$.
\end{proof}

\begin{lem}\label{l:Smk2}
The functions $S_{m,k}$ defined in \eqref{e:Smk} satisfy \[S_{m,k}=\sum_{j=0}^{[k/2]}(-1)^j N(m+2j-k,j)\chi_{\sigma_{k-2j}},\]
 (as class functions on $\SO(2m)$).
\end{lem}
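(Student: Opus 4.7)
The plan is to recognize $\chi_{\sigma_r}$ on the maximal torus as an elementary symmetric polynomial, then reduce the claim to a triangular linear inversion whose consistency amounts to a purely combinatorial identity on the $N(n,k)$'s.

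First, since $\sigma_r \cong \Lambda^r V$ for $0 \le r < m$, and $\sigma_m = \sigma_m^+ \oplus \sigma_m^- \cong \Lambda^m V$, with $V = \bbC^{2m}$ the standard representation, evaluation at the torus element $u_\theta$ (whose eigenvalues on $V$ are $\{e^{\pm i\theta_j}\}$) gives
\[
\chi_{\sigma_r}(u_\theta) = E_{m,r} := e_r(e^{i\theta_1}, e^{-i\theta_1}, \ldots, e^{i\theta_m}, e^{-i\theta_m}),
\]
while $S_{m,k}(u_\theta) = e_k(u_1, \ldots, u_m)$ with $u_j = 2\cos\theta_j$. The lemma reduces to a polynomial identity in the $u_j$'s.

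Next, I would derive the ``forward'' relation $E_{m,r} = \sum_{s \ge 0}\binom{m - r + 2s}{s}S_{m, r-2s}$ by noting the factorization $1+tu_j+t^2 = (1+tu_j)+t^2$ and expanding
\[
\prod_{j=1}^m[(1+tu_j)+t^2] = \sum_{S \subseteq [m]}t^{2|S|}\prod_{j \notin S}(1+tu_j),
\]
then extracting the coefficient of $t^r$ (each monomial $\prod_{j \in T}u_j$ appears with multiplicity $\binom{m-|T|}{|S|}$ in the inner sum over $S$ disjoint from $T$). This is a triangular linear system in $r$, so inverts uniquely over $\bbZ$ with leading coefficient $1$.

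Substituting the hypothesized $S_{m, r-2s} = \sum_j (-1)^j N(m+2j-r+2s, j)E_{m, r-2s-2j}$ into the forward relation and matching the coefficient of $E_{m, r-2p}$ for each $p \ge 1$, the consistency reduces (after setting $n = m - r + 2p$) to the combinatorial identity
\[
\sum_{j=0}^p (-1)^j \binom{n - 2j}{p - j}N(n, j) = 0 \quad \text{for } 1 \le p \le \lfloor n/2\rfloor,
\]
or equivalently, to the generating-function identity
\[
f_n(t) := \sum_{j \ge 0}(-1)^j N(n, j)\, t^j (1+t)^{n-2j} = 1 + t^n.
\]

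The main obstacle is establishing $f_n(t) = 1 + t^n$. My plan is induction on $n$: the recurrence $N(n, k) = N(n-1, k) + N(n-2, k-1)$ of the preceding lemma, extended to the boundary $k = \lfloor n/2\rfloor$ via the explicit formula $N(n, k) = \binom{n-k}{k} + \binom{n-k-1}{k-1}$, gives termwise a three-term recurrence $f_n(t) = (1+t)f_{n-1}(t) - t f_{n-2}(t)$ for $n \ge 4$. Direct verification of $f_0 = 1$, $f_1 = 1+t$, $f_2 = 1 + t^2$, $f_3 = 1 + t^3$ supplies the base cases, and the inductive step $(1+t)(1+t^{n-1}) - t(1+t^{n-2}) = 1 + t^n$ is then immediate, yielding the desired identity.
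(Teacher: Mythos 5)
Your proof is correct, but it follows a genuinely different route from the paper's. The paper works directly with the Weyl character formula: it multiplies $S_{m,k}$ by the Weyl denominator, expands the result over the weights $\lambda\in E_k$, determines which $\lambda$ contribute a nonzero alternating sum, and counts those by the quantities $N_0(n,k)$, with a separate argument at $k=m$ (where $e_1+\cdots+e_{m-1}-e_m$ is not in the $W$-orbit of $e_1+\cdots+e_m$). You instead identify $\chi_{\sigma_r}$ with $e_r$ of the $2m$ eigenvalues of $u_\theta$ via $\sigma_r\cong\Lambda^rV$ (and $\sigma_m=\sigma_m^+\oplus\sigma_m^-\cong\Lambda^mV$), so the lemma becomes the inversion of the unitriangular change of basis between $e_k(u_1,\dots,u_m)$, $u_j=x_j+x_j^{-1}$, and $e_r(x_1^{\pm1},\dots,x_m^{\pm1})$; verifying that the proposed matrix is the two-sided inverse reduces to $\sum_j(-1)^jN(n,j)\,t^j(1+t)^{n-2j}=1+t^n$, which under $t=e^{2\ell}$ is exactly the Chebyshev-type identity $\sum_j(-1)^jN(k,j)(2\cosh\ell)^{k-2j}=2\cosh(k\ell)$ that the paper invokes immediately after this lemma in the proof of Proposition \ref{p:Deven} --- so your argument in effect merges the two steps into one. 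What your route buys is a uniform treatment of $k=m$ and the complete avoidance of weight combinatorics; what it costs is the imported standard fact that $\Lambda^rV$ is irreducible of highest weight $e_1+\cdots+e_r$ for $r<m$, which the paper's self-contained character computation never needs. Two points you implicitly get right and should keep explicit: the inequalities in \eqref{e:Smk} must be read as strict (so that $S_{m,k}=e_k(2\cos\theta_1,\dots,2\cos\theta_m)$, as the paper's own proof confirms), and the three-term recurrence $f_n=(1+t)f_{n-1}-tf_{n-2}$ genuinely fails at $n=2$ (since $N(2,1)=2\neq N(1,1)+N(0,0)$), so the direct base cases through $n=3$ and the boundary convention $N(n,k)=0$ for $2k>n$ are both necessary, exactly as you arranged.
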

\begin{proof}
To simplify notation we denote by $\chi_k=\chi_{\sigma_k}$ and $\chi_m^\pm=\chi_{\sigma_m^\pm}$.
Let $W$ denote the Weyl group of $\SO(2m)$, which acts on the weights $e_1,\ldots, e_m$ by permutations and even sign changes.
We recall the Weyl character formula
\[\chi_k(u)=\frac{1}{D(u)}\sum_{w\in W}\sgn(w)\xi_{w(\delta+e_1+\ldots+e_k)}(u),\]
where $\xi_{e_j}(u_\theta)=e^{i\theta_j}$,
$\delta=\sum_{j=1}^m(m-j)e_j$ is half the sum of the positive roots, and $D(u)=\sum_{w\in W}\sgn(w)\xi_{w\delta}(u)$ denotes the Weyl discriminant of $\SO(2m)$.

We first prove the formula for $k<m$. Let
$$W^+=\{w\in W|\sgn(w)=1\}$$
denote the subgroup of positive elements and observe that we can write
$$S_{m,k}=\frac{1}{N_k}\sum_{s\in W^+}\xi_{s(e_1+\ldots+e_k)},$$
where
\[N_k=\#\{w\in W^+|w(e_1+\ldots+e_k)=e_1+\ldots+e_k\}.\]
Multiplying by the Weyl discriminant we get
\begin{eqnarray*}
D S_{m,k}&=&\frac{1}{N_k}\sum_{w\in W}\sgn(w)\xi_{w\delta}\sum_{s\in W^+}\xi_{s(e_1+\ldots+e_k)}\\
&=& \sum_{\lambda\in E_k}\sum_{w\in W}\sgn(w)\xi_{w(\delta+\lambda)}
\end{eqnarray*}
where $E_k=\{\pm e_{j_1}\pm\ldots \pm e_{j_k}|1\leq j_1<\ldots<j_k\leq m\}$. To get this last equality, for each $\lambda\in E_k$ and $w\in W$ we collected together the $N_k$ elements $s\in W^+$ satisfying that $s(e_1+\ldots+e_k)=w\lambda$.

We can write any $\lambda\in E_k$ as $\lambda=\sum_{j=1}^m \mu_j e_j$ with $\mu_j\in\{-1,0,1\}$ (with $k$ nonzero entries). We note that if $\mu_{j+1}=\mu_j+1$ for some $0\leq j\leq m-1$ or $\mu_{j+2}=\mu_j+2$ for some $0\leq j\leq m-2$ then the transposition $w_{j,j+1}$ (respectively $w_{j,j+2}$) of $e_j$ and $e_{j+1}$ (respectively $e_{j+2}$) sends $\delta+\lambda$ to itself. This implies that for any such weight $\lambda$ the sum $\sum_{w\in W}\sgn(w)\xi_{w(\delta+\lambda)}=0$.
Consequently, the only weights $\lambda\in E_k$ that contribute to the sum are of the form
\begin{equation}\label{e:weights1}
\lambda=e_1+\ldots+e_{k-2\ell}-e_{j_1}+e_{j_1+1}+\ldots-e_{j_\ell}+e_{j_\ell+1},\end{equation}
for some $0\leq \ell \leq \tfrac{k}{2}$ where $j_1,\ldots,j_{\ell}$ satisfy
\begin{equation*}\label{e:goodindices2}
k-2\ell+1\leq j_1<j_1+1<j_2<\ldots< j_{\ell}\leq  m-1
\end{equation*}
or of the form
\begin{equation}\label{e:weights2}
\lambda=e_1+\ldots+e_{k-2\ell}-e_{j_1}+e_{j_1+1}+\ldots-e_{j_\ell}-e_{j_{\ell}+1},\end{equation}
for some $0\leq \ell \leq \tfrac{k}{2}$ where $j_1,\ldots,j_{\ell}$ satisfy
\begin{equation*}\label{e:goodindices3}
k-2\ell+1\leq j_1<j_1+1<j_2<\ldots< j_{\ell}=m-1.
\end{equation*}
For each $\lambda$ satisfying \eqref{e:weights1} or \eqref{e:weights2} let $w_\lambda$ denote the composition of the transpositions $w_{j_i,j_i+1},\;i=1,\ldots,\ell$. We then have that $\sgn(w_\lambda)=(-1)^\ell$ and
$$w_\lambda(\delta+\lambda)=\delta+e_1+\ldots+e_{k-2\ell},$$
implying that
$\frac{1}{D}\sum_{w\in W}\xi_{w(\rho+\lambda)}=(-1)^\ell\chi_{k-2\ell}$.

Now, for each $1\leq \ell\leq \frac{k}{2}$ there are precisely $N_0(m-k+2\ell,\ell)$ weights $\lambda\in E_k$ satisfying \eqref{e:weights1} and $N_0(m-k+2\ell-2,\ell-1)$ weights satisfying \eqref{e:weights2}. We can thus conclude that for any $k<m$
\begin{eqnarray*}
S_{m,k}
=\sum_{\ell=0}^{[k/2]}(-1)^\ell N(m+2\ell-k,\ell)\chi_{k-2\ell}
\end{eqnarray*}

For $k=m$ we need to make small adjustments to the argument as there is no $w\in W$ such that
$s(e_1+\ldots+e_m)=e_1+\ldots+e_{m-1}-e_m$. Let $W_0$ denote the group of all sign changes (this is not a subgroup of the Weyl group as we allow odd sign changes as well). We can write
\[S_{m,m}=\sum_{s\in W_0}\xi_{s(e_1+\ldots+e_m)},\]
and as before
\begin{eqnarray*}
D S_{m,m}&=&\sum_{w\in W}\sgn(w)\xi_{w\delta}\sum_{s\in W_0}\xi_{s(e_1+\ldots+e_m)}\\
&=& \sum_{\lambda\in E_m}\sum_{w\in W}\sgn(w)\xi_{w(\delta+\lambda)}
\end{eqnarray*}
where $E_m=\{\pm e_1\pm\ldots\pm e_m\}$ as before.

In this case, we get a contribution from all $\lambda\in E_m$ satisfying \eqref{e:weights1} and \eqref{e:weights2}, but also from the weight $\lambda=e_1+\ldots+e_{m-1}-e_m$, giving the formula
\begin{eqnarray*}S_{m,m}&=&\chi_m^-+\chi_m^++\sum_{\ell= 1}^{[m/2]}(-1)^\ell N(2\ell,\ell)\chi_{m-2\ell}\\
&=& \sum_{\ell= 0}^{[m/2]}(-1)^\ell N(2\ell,\ell)\chi_{m-2\ell}\end{eqnarray*}
\end{proof}
\subsection{The Weyl discriminant}
Combining these results we get the following expression for the Weyl discriminant.
\begin{prop}\label{p:Discriminant}
The Weyl discriminant can be written as
\begin{equation} D(\gamma)=\psi(\ell_\gamma)\sum_{q=0}^{m}(-1)^q(e^{(m-q)\ell_\gamma}+e^{(q-m)\ell_\gamma})\chi_{\eta_{q}}(m_\gamma),\end{equation}
with $m=\rho-\alpha_0(G)$, $\eta_0,\ldots,\eta_m$ virtual representations of $M$ with $\eta_0$ trivial, and $\psi(\ell)$ the weight function in \eqref{e:weight}.
\end{prop}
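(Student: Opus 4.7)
The plan is to apply Corollary \ref{c:Discriminant} to split $D(\gamma)$ as the product $F_{n_1}(\ell_\gamma,\iota_1(m_\gamma))\cdot F_{n_2}(2\ell_\gamma,\iota_2(m_\gamma))$, expand each factor by Proposition \ref{p:Deven}, and then reorganize. In every case the stated weight function $\psi(\ell)$ will emerge as the hyperbolic-sine prefactor produced by \eqref{e:FluOdd} whenever an odd-dimensional component is present (with $\ell$ doubled when that component is $\frakn_2$), while the top index $m$ and the sign pattern will come out of the even-case formula \eqref{e:FluEven}.

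The orthogonal and unitary cases are direct. For $G=\SO_0(2n+1,1)$ we have $n_2=0$ and $n_1=2n$ even, so \eqref{e:FluEven} is itself the claim with $\psi=1$, $m=n$, and $\eta_q=\sigma_q$. For $G=\SO_0(2n+2,1)$ we have $n_2=0$ and $n_1=2n+1$ odd; the factorization $F_{2n+1}(\ell,\tilde u_\theta)=2\sinh(\ell/2)F_{2n}(\ell,u_\theta)$ used inside the proof of Proposition \ref{p:Deven} gives the claim with $\psi=2\sinh(\ell/2)$, $m=n$, and $\eta_q$ the virtual representation of $\SO(2n+1)$ whose restriction to $\SO(2n)$ is $\sigma_q$ (existence guaranteed by \eqref{e:KMrelation}). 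For $G=\SU(n+1,1)$ the homomorphism $\iota_2$ is trivial, so $F_1(2\ell_\gamma,I)=2\sinh(\ell_\gamma)$ is a scalar, and expanding the first factor by \eqref{e:FluEven} gives the result with $\psi=2\sinh(\ell)$, $m=n$, and $\eta_q=\sigma_q\circ\iota_1$.

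The two-factor cases $G=\Sp(n,1)$ and $G=FII$ require more work. For $\Sp(n,1)$ one expands $F_{4(n-1)}(\ell,\iota_1(m_\gamma))$ via \eqref{e:FluEven} and writes $F_3(2\ell,\iota_2(m_\gamma))=2\sinh(\ell)\bigl[(1+e^{2\ell}+e^{-2\ell})-\chi_{\tau_1\circ\iota_2}(m_\gamma)\bigr]$ via \eqref{e:FluOdd}. Distributing the product and applying the elementary identity
\begin{equation*}
(e^{k\ell}+e^{-k\ell})(1+e^{2\ell}+e^{-2\ell})=(e^{k\ell}+e^{-k\ell})+(e^{(k+2)\ell}+e^{-(k+2)\ell})+(e^{(k-2)\ell}+e^{-(k-2)\ell})
\end{equation*}
to collect coefficients of each $(e^{j\ell}+e^{-j\ell})$ yields the claim with $\psi=2\sinh(\ell)$, top index $m=2n=\rho_G-\alpha_0(G)$, and $\eta_q$ integer combinations of $\sigma_p\circ\iota_1$ and $(\sigma_p\circ\iota_1)\otimes(\tau_1\circ\iota_2)$. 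The case $G=FII$ proceeds identically, combining $F_8$ (via \eqref{e:FluEven}, top index $4$) with $F_7(2\ell,\cdot)$ (via \eqref{e:FluOdd}, top index $3$) to give an alternating sum with top index $4+2\cdot 3=10=\rho_{FII}-1$.

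The main obstacle is the bookkeeping in this last regrouping: one must verify that after distributing the product the alternating signs really line up as stated and that the coefficient of the leading exponential is the trivial character. The first follows because the multiplication identity above shifts the index $k$ by $\pm 2$ and so preserves the parity on which the sign $(-1)^q$ depends; the second follows because the highest exponential $(e^{m\ell}+e^{-m\ell})$ can only be produced by pairing the $q=0$ term of the first factor with the top-exponential piece of the second factor, both of which carry trivial characters. The remaining $\eta_q$ are then defined as the virtual characters collected alongside each $(e^{(m-q)\ell}+e^{(q-m)\ell})$ after the rearrangement.
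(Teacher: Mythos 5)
Your proposal follows the paper's proof almost verbatim: Corollary \ref{c:Discriminant} splits $D(\gamma)$ into the two determinant factors, Proposition \ref{p:Deven} expands each, and the regrouping for $\Sp(n,1)$ via the product identity you state is exactly the computation carried out in the paper, including the parity argument for the alternating signs and the identification of $\eta_0$ as the trivial representation from the leading exponential. The orthogonal, unitary and symplectic cases are therefore complete as written.

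The one place you diverge is $G=FII$, and your claim that this case ``proceeds identically'' hides the point the paper treats as an obstruction: $\iota_1$ is the $8$-dimensional spin representation of $M=\mathrm{Spin}(7)$, realized a priori only in $\GL_8(\bbC)$, while Proposition \ref{p:Deven} expands $F_8(\ell,u)$ only for $u\in\SO(8)$. The paper states explicitly that it cannot apply Proposition \ref{p:Deven} to this factor and instead re-expands $F_{8}(\ell,\iota_1(\cdot))$ directly from the weights $\tfrac{1}{2}(\pm e_1\pm e_2\pm e_3)$ of the spin representation, identifying the resulting symmetric sums as characters of $\mathrm{Spin}(7)$ by the same manipulations as in Lemma \ref{l:Smk2}, and only uses Proposition \ref{p:Deven} for the $F_7(2\ell,\iota_2(\cdot))$ factor. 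Your shortcut is repairable: the spin representation of $\mathrm{Spin}(7)$ is of real type and, as a representation of a compact group on $\frakn_1$, preserves an inner product, so $\iota_1(M)$ is conjugate into $\SO(8)$ and each $\sigma_q\circ\iota_1$ is a genuine virtual representation of $M$. But this justification must be supplied; as written you invoke \eqref{e:FluEven} outside its stated hypotheses.
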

\begin{proof}
We prove it separately for the orthogonal, unitary, symplectic and exceptional groups.

\emph{Orthogonal groups:}
For $G=\SO_0(n+1,1)$ we have that $\rho=\tfrac{n}{2}$ and $m=[\tfrac{n}{2}]$. The formula follows immediately from Corollary \ref{c:Discriminant} and Proposition \ref{p:Deven} where $\eta_q=\sigma_q$ when $n=2m$, and it is the virtual representation of $\SO(2m+1)$ whose restriction to $\SO(2m)$ is $\sigma_q$ when $n=2m+1$.

\emph{Unitary groups:} For  $G=\SU(n+1,1)$ we have $\rho=n+1$ and $m=n$. Here, Corollary \ref{c:Discriminant} implies that
\[D(\gamma)=F_{2n}(\ell_\gamma,\iota_1(m_\gamma))|2\sinh(\ell_\gamma)|,\]
with $\iota_1:M\to\SO(2n)$ as in Proposition \ref{p:Adjoint}.
The result follows from Proposition \ref{p:Deven} with $\eta_q=\sigma_q\circ \iota_1$.

\emph{Symplectic groups:} For $G=\Sp(n,1)$ we have that $\rho=2n+1,\;m=2n$ and the formula in Corollary \ref{c:Discriminant} is
\[D(\gamma)=F_{4(n-1)}(\ell_\gamma,\iota_1(m_\gamma))F_3(2\ell_\gamma,\iota_2(m_\gamma)),\]
where $\iota_1:M\to \SO(4n-4)$ and $\iota_2:M\to \SO(3)$ are as in Proposition \ref{p:Adjoint}.
Let $\sigma_0,\ldots,\sigma_{2n-2}$ and $\tau_0,\tau_1$ denote the representations of $\SO(4n-4)$ and $\SO(3)$ appearing in \eqref{e:FluEven} and \eqref{e:FluOdd}. Consider the representations of $M=\Sp(n-1)$ given by $\tilde{\sigma}_q=\sigma_q\circ\iota_1$ and $\tilde\tau_q=\tau_q\circ\iota_2$.
Using the expansions \eqref{e:FluOdd} and \eqref{e:FluEven} for $F_3(2\ell,\cdot)$ and $F_{4(n-1)}(\ell,\cdot)$ we get
\begin{eqnarray*}
D(\gamma)&=&2\sinh(\ell_\gamma)\bigg(\sum_{q=0}^{m-2}(-1)^q(e^{(m-q)\ell_\gamma}+e^{(q-m)\ell_\gamma})\chi_{\tilde{\sigma}_{q}}(m_\gamma)\\
&+&\sum_{q=0}^{m-2}(-1)^q(e^{(m-4-q)\ell_\gamma}+e^{(q-m+4)\ell_\gamma})\chi_{\tilde{\sigma}_{q}}(m_\gamma)\\
&+&\sum_{q=0}^{m-2}(-1)^q(e^{(m-2-q)\ell_\gamma}+e^{(q-m+2)\ell_\gamma})\chi_{\tilde{\sigma}_{q}}(m_\gamma)\\
&-&\sum_{q=0}^{m-2}(-1)^q(e^{(m-2-q)\ell_\gamma}+e^{(q-m-2)\ell_\gamma})\chi_{\tilde\sigma_q\otimes \tilde\tau_1}(m_\gamma)\bigg)\\
&=&2\sinh(\ell_\gamma)\sum_{q=0}^{m}(-1)^q(e^{(m-q)\ell_\gamma}+e^{(q-m)\ell_\gamma})\chi_{\eta_{q}}(m_\gamma)
\end{eqnarray*}
with $\eta_0$ the trivial representation and $\eta_q$ the virtual representation
\[\eta_q=\left\lbrace\begin{array}{ll}
\tilde\sigma_1 & q=1\\
\tilde\sigma_q+\tilde\sigma_{q-2}-\tilde{\sigma}_{q-2}\otimes\tilde\tau_1& 2\leq q<4\\
\tilde\sigma_q+\tilde\sigma_{q-2}+\tilde\sigma_{q-4}-\tilde{\sigma}_{q-2}\otimes\tilde\tau_1& 4\leq q\leq m-2\\
2\tilde\sigma_{m-3}+\tilde\sigma_{m-5}-\tilde{\sigma}_{m-3}\otimes\tilde\tau_1&  q=m-1\\
\tilde\sigma_{m-4}+\tilde\sigma_{m-2}-\tilde{\sigma}_{m-2}\otimes\tilde\tau_1&  q=m\\
\end{array}\right.\]

\emph{Exceptional group:} For $G=FII$ we have $\rho=11,\;m=10$ and we can write
\[D(\gamma)=F_{8}(\ell_\gamma,\iota_1(m_\gamma))F_7(2\ell_\gamma,\iota_2(m_\gamma)),\]
where $\iota_1:M\to \GL_8(\bbC)$ and $\iota_2:M\to \SO(7)$ are the spin and orthogonal representations of $M=\mathrm{Spin}(7)$.
Note that the function $F_8(\ell,\cdot)$ is actually a class function on $GL_8(\bbC)$ so this makes sense.
However, $\iota_1(m_\gamma)$ is not in the orthogonal group so we can't use Proposition \ref{p:Deven} directly.
Nevertheless, since the weights of the spin representations are $\{s\lambda|\;s\in W_0=\{\pm 1\}^3\}$ with $\lambda=\tfrac{1}{2}(e_1+e_2 +e_3)$ we have
\begin{eqnarray*}
F_{8}(\ell,\iota_1(\cdot))&=& \prod_{s\in W_0}(e^{-\ell/2}-e^{\ell/2}\xi_{s\lambda})\\
&=& 2\cosh(4\ell)-2\cosh(3\ell)\!\!\sum_{s\in W_0}\!\!\xi_{s\lambda}+2\cosh(2\ell)\!\!\!\!\!\!\sum_{\{s_1,s_2\}\subseteq W_0}\!\!\!\!\!\!\xi_{(s_1+s_2)\lambda}\\
&-&2\cosh(\ell)\!\!\!\!\!\sum_{\{s_1,s_2,s_3\}\subseteq W_0}\xi_{(s_1+s_2+s_3)\lambda}+\!\!\!\sum_{\{s_1,\ldots,s_4\}\subseteq W_0}\!\!\!\xi_{(s_1+s_2+s_3+s_4)\lambda}
\end{eqnarray*}
Using similar augments to the ones used in the proof of Lemma \ref{l:Smk2} we identify each of the above sums as a linear combination of irreducible characters of $\mathrm{Spin}(7)$ to get
\begin{eqnarray*}
\lefteqn{F_{8}(\ell,\iota_1(\cdot))= 2\cosh(4\ell)-2\cosh(3\ell)\chi_\lambda+2\cosh(2\ell)(\chi_{e_1+e_2}+\chi_{e_1}+1)}\\
&&-2\cosh(\ell)(\chi_\lambda+\chi_{\lambda+e_1})+2(\chi_{2e_1}+\chi_{e_1+e_2+e_3}+\chi_{e_1+e_2}+\chi_{e_1}+1)
\end{eqnarray*}
For $F_7(2\ell_\gamma,\iota_2(m_\gamma))$  we can use Proposition  \ref{p:Deven} directly to get
\[F_7(2\ell_\gamma,\iota_2(m_\gamma))=2\sinh(\ell)\sum_{q=0}^3\big(\sum_{k=q-3}^{3-q}e^{2k\ell_\gamma}\big)\chi_{\tau_k}(m_\gamma).\]
Multiplying the two expansions and collecting together terms with the same coefficients as we did for the symplectic groups we get that indeed
\[D(\gamma)=2\sinh(\ell_\gamma)\sum_{q=0}^{10}(e^{(10-q)\ell_\gamma}+e^{(q-10)\ell_\gamma})\chi_{\eta_q}(m_\gamma),\]
with $\eta_0$ trivial and $\eta_q,\;q=1,\ldots,10$ appropriate virtual representations of $M$.

\end{proof}

%

\section{Proof of Theorems \ref{t:Length2Laplace}, \ref{t:Length2Length} and \ref{t:Length2LengthOdd}}\label{s:Length}
We can now use the alternating trace formula to relate the length spectrum with some combination of the representation spectrum.
The remarkable point is that we can actually retrieve each one of the multiplicities $\m_{\Gamma,\eta_q}$ appearing in the formula (rather than just their combination). The key to retrieving the individual multiplicities is the fact that when dilating the functions $\hat{g}_q(T\nu)$ they grow exponentially in $T$ and different values of $q$ correspond to different rates of exponential growth.
Theorems \ref{t:Length2Laplace}, \ref{t:Length2Length} and \ref{t:Length2LengthOdd}  will all follow from the following result.
\begin{thm}\label{t:Length2Spec}
Let $G$ be as above and $\Gamma_1,\Gamma_2\subseteq G$ two uniform torsion free lattices.
Assume that $\DL(\Gamma_1,\Gamma_2;T)$ satisfies
\begin{equation}\label{e:lengthbound}
\DL(\Gamma_1,\Gamma_2;T)\lesssim e^{\alpha T},
\end{equation}
for some $\alpha\in[0,\rho)$. Then $\m_{\Gamma_1,\eta_q}=\m_{\Gamma_2,\eta_q}$ for all $q<\rho-\alpha$.
\end{thm}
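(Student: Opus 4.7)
My plan is to prove Theorem \ref{t:Length2Spec} by upward induction on $q_0$, using the alternating trace formula of Theorem \ref{t:AlternatingTrace} with carefully dilated and spectrally localized test functions. Taking the difference of the trace formulas for $\Gamma_1$ and $\Gamma_2$ produces
\[
\tfrac{1}{2}\sum_{\ell\in\pls}\ell\,\Delta\m^o(\ell)\sum_{j\geq 1}\frac{g(j\ell)}{\psi(j\ell)}
=\sum_{q=0}^{m}(-1)^{q}\!\left(\sum_{\nu_k}\Delta\m_{\eta_q}(\nu_k)\hat g_q(i\nu_k)
-\Delta V\!\int_\bbR \hat g_q(\nu)\mu_{\eta_q}(\nu)\,d\nu\right),
\]
with $\Delta V=\vol(\Gamma_1\bs G)-\vol(\Gamma_2\bs G)$. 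The key mechanism is that, under a dilation $g_T(\ell)=T^{-1}g(\ell/T)$ of an even smooth compactly supported bump, Paley--Wiener gives $|\hat g_{T,q}(i\nu)|\lesssim e^{T(m-q+|\nu|)}$ with polynomial decay in the real direction, so different values of $q$ contribute at genuinely distinct exponential rates $e^{T(m-q)}$. Meanwhile, using $1/\psi(j\ell)\lesssim e^{-\alpha_0 j\ell}$ together with Abel summation against $\DL(T)\lesssim e^{\alpha T}$ bounds the length side by $O(Te^{(\alpha-\alpha_0)T})$. After normalizing the identity by $e^{T(m-q_0)}$, the length-side ratio is $Te^{T(\alpha-\rho+q_0)}\to 0$ precisely when $q_0<\rho-\alpha$, matching the theorem's hypothesis.

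For the base case $q_0=0$, $\eta_0=\mathbf 1$ is trivial so $\Delta\m_{\eta_0}(\nu_0)$ is essentially the Laplace-spectrum difference. I would follow the two-stage template of Proposition \ref{p:length2spec}: first treat the (finitely many) complementary-series parameters $\nu_0\in(0,\rho)$ in \emph{decreasing} order using a test function of the type $\hat g_T(\nu)\propto \frac{\sin(T\nu)\hat h(\nu)}{\nu\sinh(T\nu_0)}$ (shifted by $i(m-q_0)$ to match $\hat g_q$) that annihilates the larger already-handled poles, then treat principal-series parameters $\nu_0=it_0\in i\bbR$ by localizing with $\cos(t_0\ell)T^{-1}h(\ell/T)$. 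After dividing the identity by the appropriate normalization and letting $T\to\infty$, the volume term vanishes like $\Delta V/T^d$ (since the Plancherel measure grows like $|\nu|^{d-1}$), off-target principal parameters are damped by $(T|t_k-t_0|)^{-N}$, higher $q$'s by $e^{-T(q-q_0)}$, and the length side by the ratio estimate above, leaving only $\Delta\m_{\eta_0}(\nu_0)=0$. Weyl's law for the principal series (\cite[Corollary 1]{MiatelloVargas83}) then forces $\Delta V=0$, so the volume integrals drop out of every subsequent induction step. The inductive step is structurally identical: assuming $\Delta\m_{\eta_q}\equiv 0$ for $q<q_0$ and $\Delta V=0$, only $q\geq q_0$ remain in the spectral sum, the $q_0$ term dominates all others by $e^{T(q-q_0)}$, and the same test-function recipe isolates $\Delta\m_{\eta_{q_0}}(\nu_0)=0$ for every $\nu_0\in(0,\rho)\cup i\bbR$.

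The main obstacle will be the Paley--Wiener bookkeeping that tracks the several complex-shifted copies of $\hat h$ arising when $\hat g_q(\nu)=\hat g(\nu\pm i(m-q))$ is combined with the spectral localization. After normalization by $e^{T(m-q_0)}$, I need to verify carefully that only the target $(q,\nu_k)=(q_0,\pm\nu_0)$ survives in the limit, while all off-target principal parameters are damped by polynomial decay of $\hat h$ against the Weyl spectral density, all $q>q_0$ by the exponential ratio, and all stray complementary poles by the annihilating meromorphic factor built into the test function. The complementary-series case additionally requires the larger normalization $e^{T(m-q_0+\nu_0)}$, which is still safely larger than the length-side bound since $\nu_0>q_0-(\rho-\alpha)$ is automatic from $\nu_0>0$ and $q_0<\rho-\alpha$. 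Once these estimates are assembled, the induction closes and delivers Theorem \ref{t:Length2Spec}.
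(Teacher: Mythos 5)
Your overall architecture (minimal bad index $q_0$, dilated and modulated test functions in the alternating trace formula, exponential separation of the $q$'s, Abel summation of the length side against $e^{\alpha T}$ with the extra $e^{-\alpha_0 j\ell}$ from $\psi$) matches the paper, and your principal-series step is essentially the paper's second step. But there is a genuine gap in your treatment of the complementary series. You claim that after normalizing, ``higher $q$'s [are damped] by $e^{-T(q-q_0)}$'' and that you can therefore isolate $\Delta\m_{\eta_{q_0}}(\nu_0)$ for $\nu_0\in(0,\rho)$ by processing the poles in decreasing order. This damping is false precisely for the complementary series: a parameter $\nu\in(0,\rho)$ of $\eta_q$ enters the formula through $\hat g_q(i\nu)=\hat g(i(\nu+(m-q)))+\hat g(i(\nu-(m-q)))$, so under dilation it grows like $e^{T(\nu+m-q)}$, and the pair $(q',\nu')=(q+1,\nu+1)$ produces a contribution of \emph{identical} exponential size at the \emph{same} shifted height $x=\nu+(m-q)$. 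The imaginary shifts built into $\hat g_q$ exactly offset the $q$-dependence, so no test function localized at a height $x$ can see anything finer than the grouped coefficient
\[
C(x)=\sum_{q=0}^m(-1)^q\bigl(\Delta\m_{\eta_q}(x-(m-q))+\Delta\m_{\eta_q}(x+(m-q))\bigr),
\]
and your ``decreasing order'' scheme only yields $C(x)=0$, not the vanishing of the individual terms (note also that the largest pole need not come from $\eta_{q_0}$ at all).

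The paper resolves this in two moves that your proposal is missing. First, it proves only the weaker statement $C(x)=0$ for $|x|>m-q_0$, which is exactly what is needed so that the surviving complementary contributions do not swamp the normalization $\hat g(iT(m-q_0))$ in the subsequent principal-series localization. Second, having established $\Delta\m_{\eta_{q_0}}(i\nu)=0$ for all real $\nu$, it does \emph{not} return to the alternating trace formula for the complementary parameters of $\eta_{q_0}$; instead it invokes Theorem \ref{t:Spec2Length2Spec} (the single-$\sigma$ trace formula machinery of Section \ref{s:Spec}): density-zero agreement of the principal $\eta_{q_0}$-spectrum forces $\Delta L_{\eta_{q_0}}=0$, which in turn forces $\Delta\m_{\eta_{q_0}}(\nu)=0$ on all of $(0,\rho)$. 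Without this detour your induction does not close, because the inductive hypothesis you need at step $q_0+1$ includes the complementary multiplicities of $\eta_{q_0}$. Your remaining estimates (length side $O(e^{(\alpha-\alpha_0)T})$ versus normalization $e^{T(m-q_0)}$, the $O(\Delta V/T)$ decay of the Plancherel term, Weyl's law giving $\Delta V=0$ after the base case) are in the right spirit and consistent with the paper, which additionally restricts the test function's support to an annulus $cT\le|\ell|\le T$ to make the $\psi$-gain explicit.
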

\begin{proof}
We retain the notation $m=\rho-\alpha_0$ with $\alpha_0\in\{0,\tfrac{1}{2},1\}$ (depending on $G$). We note that the weight function $\psi$ in the alternating trace formula satisfies $\psi(\ell)\asymp e^{\alpha_0\ell}$ for large $\ell>0$.

Assume by contradiction that $\m_{\Gamma_1,\eta_{q}}\neq \m_{\Gamma_2,\eta_{q}}$ for some $q<\rho-\alpha$ and let $q_0$ denote smallest such $q$.
We first show that the bound \eqref{e:lengthbound} implies that an appropriate grouping together of the complementary series must cancel. To do this we consider the difference between the alternating trace formulas for $\Gamma_1$ and $\Gamma_2$,
\begin{eqnarray*}
\frac{1}{2}\sum_{\ell\in \pls} \ell \Delta \m^o(\ell)\sum_{j=1}^\infty \frac{g(j \ell)}{\psi(j \ell)}&=& \sum_{q=0}^m (-1)^{q+1}\Delta V\int_\bbR \hat{g}_{q}(\nu)\mu_{\eta_q}(\nu)d\nu\\
&+&\sum_{q=0}^m(-1)^q\sum_{\nu_k\in \bbR} \Delta \m_{\eta_q}(i\nu_k)\hat{g}_{q}(\nu_k)\\
&+&\sum_{q=0}^m(-1)^q\sum_{\nu_k\in (0,\rho)} \Delta \m_{\eta_q}(\nu_k)\hat{g}_{q}(i\nu_k)\\
\end{eqnarray*}
We can rewrite the third sum as
\begin{eqnarray*}
\lefteqn{\sum_{q=0}^m(-1)^q\sum_{\nu_k\in (0,\rho)} \Delta \m_{\eta_q}(\nu_k)\hat{g}_{q}(i\nu_k)}\\
&&=\sum_{q=0}^m(-1)^q\sum_{\nu_k\in (0,\rho)} \Delta \m_{\eta_q}(\nu_k)(\hat{g}(i(\nu_k+q))+\hat{g}(i(\nu_k-q)))\\
&&=\sum_{x_k}\hat{g}(ix_k)\sum_{q=0}^m(-1)^q(\Delta \m_{\eta_q}(x_k-q)+\Delta \m_{\eta_q}(x_k-q))\\
&&=\sum_{x_k}\hat{g}(ix_k)C(x_k)\\
\end{eqnarray*}
where
\[C(x)=\sum_{q=0}^m(-1)^q(\Delta \m_{\eta_q}(x-q)+\Delta \m_{\eta_q}(x-q)).\]
Note that $C(x)\neq 0$ only on a finite (possibly empty) set which is contained in $(-m,\rho+m)$.
We show that in fact $C(x)=0$ for all $|x|>m-q_0$. Otherwise, let $x_0\in (-m,\rho+m)$ with $|x_0|>m-q_0$ be such that $C(x_0)\neq 0$ and that
$C(x)=0$ for all $|x|>|x_0|$ (we can find such a point as $C(x)\neq 0$ on a finite set).

Since we assume that $q_0<m+\alpha_0-\alpha$ and that $|x_0|>m-q_0$, we get that $\alpha-|x_0|<\alpha_0$ and we can find some $c\in [0,1)$ such that $\alpha-|x_0|<\alpha_0c<\alpha_0$ (where $c=0$ iff $\alpha_0=0$). We now estimate the two sides of the alternating trace formula with the test function
$$g_T(\ell)=\frac{x_0\hat{g}(ix_0)}{\sinh(Tx_0)}(g*\id_{cT,T})(\ell),$$
where $\id_{cT,T}$ denotes the indicator function of $[-T,-cT]\cup [cT,T]$.

Using the fact that $|g_T(\ell)|\lesssim e^{-T|x_0|}$ for $\ell\in[cT-1,T+1]$ and $g_T(\ell)=0$  for $\ell\not\in [cT-1,T+1]$ we can bound the left hand side of the trace formula by some constant times
\begin{eqnarray*}
e^{-T|x_0|}\sum_{\ell\leq T}\ell |\Delta \m^o(\ell)|\sum_{k=\frac{cT}{\ell}}^{T/\ell} e^{-\alpha_0k\ell}
&\lesssim& e^{-T(|x_0|+\alpha_0c)}\sum_{\ell\leq T} |\Delta \m^o(\ell)|\\&\lesssim &e^{-T(|x_0|+\alpha_0c-\alpha)},
\end{eqnarray*}
which goes to zero as $T\to\infty$.

For the right side of the trace formula, for any $\nu\in \bbR$  we can bound
$$|\hat{g}_{T,q}(\nu)|\lesssim Te^{T(m-q-|x_0|)}|\hat{g}_q(\nu)|$$
implying that
for all $q\geq q_0$ as $T\to\infty$
\[\sum_{\nu_k\in \bbR} |\Delta \m_{\eta_q}(i\nu_k)||\hat{g}_{T,q}(\nu_k)|\to 0.\]
If $q_0>0$ then the minimality of $q_0$ implies that $\Delta \m_{\eta_0}=0$, and since $\eta_0=1$ is the trivial representation we get that  $\Delta V=0$ (e.g., from Weyl's law).
In the case when $q_0=0$, then $m-|x_0|<0$ and the bound $|\hat{g}_{T,q}(\nu)|\lesssim Te^{T(m-|x_0|)}|\hat{g}_q(\nu)|$
implies that $\int_\bbR \hat{g}_{T,q}(\nu)\mu_q(\nu)d\nu\to 0$.
Consequently, the only part of the right hand side of the formula that does not vanish in the limit is the finite sum $\sum_{|x_k|\leq |x_0|}\hat{g}_T(ix_k)C(x_k)$
which converges to $C(x_0)$. Since the left hand side of the formula goes to zero we get that $C(x_0)=0$ in contradiction.

Next we show that $\Delta\m_{\sigma_{q_0}}(i\nu)=0$ for $\nu\in \bbR$.
To do this we fix $\nu_0\in(0,\infty)$ and estimate both sides of the alternating trace formula with the test function
$$g_{T}(\ell)= \frac{g(\frac{\ell}{T})\cos(\ell \nu_0)}{T\hat{g}(iT(m-q_0))},$$
where $g\in C^\infty(\bbR)$ is even and supported on $(-1,-c)\cup (c,1)$ with $c\in[0,1)$ satisfying that $q_0-m+\alpha<c\alpha_0<\alpha_0$ (again $c=0$ iff $\alpha_0=0$).  We note that since $g$ is supported on $(-1,-c)\cup (c,1)$ we can estimate
$$e^{c'T}\lesssim\hat{g}(iT)\lesssim e^{T}$$
for any $c<c'<1$.

For the left hand side, using \eqref{e:lengthbound} together with the bound
$$|g_{T}(\ell)|\lesssim  \frac{|g(\frac{\ell}{T})|}{T\hat{g}(iT(m-q_0))}\lesssim_{c'} \frac{e^{-T(m-q_0)c'}}{T},$$
for any $c'\in (c,1)$ and the fact that $g(\ell)=0$ for $\ell\in (-c,c)$, the same argument as above gives a bound of $O(e^{-T((m-q_0)c'+\alpha_0c-\alpha)})$.
Since $q_0<m-\alpha+\alpha_0c$ we can choose $c'$ sufficiently close to one to insure that the left hand side goes to zero as $T\to\infty$.

Next we estimate the right hand side. For $x\in (-m,\rho+m)$, if $|x|<m-q_0$, then
\[|\hat{g}_{T}(ix)|=\frac{\hat{g}(T(ix+\nu_0))+\hat{g}(T(ix-\nu_0))}{\hat{g}(iT(m-q_0))}\lesssim e^{T(|x|-c'(m-q_0))}\to 0,\]
and if $|x|=m-q_0$ then
\[|\hat{g}_{T}(ix)|=\frac{\hat{g}(T(i(m-q_0)+\nu_0))+\hat{g}(T(i(m-q)-\nu_0))}{\hat{g}(iT(m-q_0))}\lesssim \frac{1}{T\nu_0}\to 0.\]
Since we already showed that $C(x)=0$ for $|x|>m-q_0$ we get that the contribution of the complementary series
$\sum \hat{g}_{T}(ix_k) C(x_k)\to 0$ as $T\to\infty$.

For $\nu\in \bbR$ we can estimate $|\hat{g}_{T,q}(\nu)|\lesssim_N \frac{e^{-T(q-q_0)}}{1+||\nu|-\nu_0|^N}$ for all $q\geq q_0$, while if $\nu=\nu_0$ then $\hat{g}_{T,q}(\nu_0)\to 1$ as $T\to\infty$. Consequently, the right hand side of the trace formula converges to $(-1)^{q_0}\Delta \m_{\eta_{q_0}}(i\nu_0)$, and comparing to the left hand side we conclude that $\Delta \m_{\eta_{q_0}}(i\nu_0)=0$.

We have thus shown that $\Delta\m_{\sigma_{q_0}}(i\nu)=0$ for all $\nu\in \bbR\setminus \{0\}$. Consequently, Theorem \ref{t:Spec2Length2Spec} implies that $\Delta L_{\eta_{q_0}}=0$ and hence $\Delta\m_{\eta_{q_0}}(\nu)=0$ also for $\nu\in [0,\rho)$.
\end{proof}

\subsection{Proof of Theorem \ref{t:Length2Laplace}}
Assume that
$\DL(\Gamma_1,\Gamma_2;T)\lesssim e^{\alpha T}$,
with $0\leq \alpha<\rho$. Then Theorem \ref{t:Length2Spec} implies that  $\m_{\Gamma_1,\eta_0}=\m_{\Gamma_2,\eta_0}$. But $\eta_0$ is the trivial representation of $M$ and hence
$\m_{\Gamma_1}(\pi_{1,\nu})=\m_{\Gamma_2}(\pi_{1,\nu})$ for all $\nu\in (0,\rho)\cup i\bbR$.

\subsection{Proof of Theorem \ref{t:Length2Length}}
When $G/K$ is not an odd dimensional real hyperbolic space the threshold $\alpha_0>0$. Assume that
\begin{equation*}
\sum_{\ell\leq T}|\Delta L^o(\ell)|\lesssim e^{\alpha T},
\end{equation*}
with $0<\alpha<\alpha_0$. Then Theorem \ref{t:Length2Spec} implies that $\m_{\Gamma_1,\eta_q}=\m_{\Gamma_2,\eta_q}$ for
$q=0,\ldots,m$. Since these are all the representation appearing in the alternating trace formula we get that for any even test function $g\in C^\infty_c(\bbR)$
\begin{eqnarray*}
\sum_{\ell \in \pls} \ell \Delta \m^o(\ell)\sum_{j=1}^\infty \frac{g(j \ell)}{\psi(j \ell)}=0.
\end{eqnarray*}
But this can only happen if $\Delta \m^o(\ell)=0$ for all $\ell\in \bbR$ (otherwise, taking $g$ supported around the smallest $\ell$ where $\Delta \m^o(\ell)\neq 0$ will give a contradiction).
\subsection{Proof of Theorem \ref{t:Length2LengthOdd}}
Let $G=\SO_0(2m+1,1)$ and $\Gamma_1,\Gamma_2\subseteq G$ two uniform torsion free lattices.
Assume that $\m^o_{\Gamma_1}(\ell)=\m^o_{\Gamma_2}(\ell)$ for all $\ell\in \bbR$ except perhaps a finite set $\{\ell_1,\ldots,\ell_k\}$.
We thus have that $\DL(\Gamma_1,\Gamma_2;T)=O(1)$ and Theorem \ref{t:Length2Spec} implies that $\Delta \m_{\sigma_q}(\nu)=0$ for $0\leq q\leq m-1$.
This also implies that $\vol(\Gamma_1\bs G)=\vol(\Gamma_2\bs G)$ and hence the alternating trace formula takes the form
\begin{eqnarray}\label{e:remainder}
\frac{1}{2}\sum_{j=1}^k \ell_j\Delta \m^o(\ell_j)\sum_{q=1}^\infty g(q\ell_j)=(-1)^m\sum_{\nu\in S_{\sigma_m}}\Delta \m_{\sigma_m}(\nu)\hat{g}(i\nu)
\end{eqnarray}
Using Poisson summation, for each fixed $\ell_j$ we can substitute
\begin{eqnarray*}
\sum_{q=1}^\infty g(q\ell_j)&=&\frac{1}{2}\big(\sum_{q\in \bbZ}g(q\ell_j)-g(0)\big)\\
&=&\frac{1}{2}(\sum_{q\in \bbZ}\frac{\hat{g}(2\pi q/\ell_j)}{\ell_j}-g(0)\big)\\
&=&\sum_{q=0}^\infty\frac{\hat{g}(2\pi q/\ell_j)}{\ell_j}-\frac{g(0)}{2}\\
\end{eqnarray*}
Plugging this back into \eqref{e:remainder} replacing $g(0)=2\int_0^\infty \hat{g}(\nu)d\nu$ we get
\begin{eqnarray*}
\frac{1}{2}\sum_{j=1}^k\sum_{q=0}^\infty \Delta \m^o(\ell_j)\hat{g}(2\pi q/\ell_j)-\frac{1}{2}\left(\sum_{j=1}^k \ell_j\Delta \m^o(\ell_j)\right)\int_{0}^\infty\hat{g}(\nu)d\nu\\
=(-1)^m\sum_{\nu\in S_{\sigma_m}}\Delta \m_{\sigma_m}(\nu)\hat{g}(i\nu).
\end{eqnarray*}
This holds for any even holomorphic function $\hat{g}$ of uniform exponential type. Since
these test functions are dense in $C[0,\infty)$ we get an equality of the corresponding measures
\begin{eqnarray*}
\sum_{j=1}^k\sum_{q=0}^\infty \Delta \m^o(\ell_j)\delta(x-2\pi q/\ell_j)-\left(\sum_{j=1}^k \ell_j\Delta \m^o(\ell_j)\right)d\nu\\
=(-1)^m\sum_{\nu\in S_{\sigma_m}}2\Delta \m_{\sigma_m}(\nu)\delta(x-i\nu).
\end{eqnarray*}
Since there is no continuous measure on the right hand side we must have that $\sum_{j=1}^k \ell_j\Delta \m^o(\ell_j)=0$ so that $\{\ell_1,\ldots,\ell_k\}$ are  indeed linearly dependent over $\bbQ$.

Moreover, this formula gives additional information on the spectral parameters having different multiplicities. Specifically, we get that $\Delta \m_{\sigma_m}(\nu)\neq 0$ only when $\nu\in i\bbR^+$ satisfies $e^{\nu \ell_j}=1$ for some $j=1,\ldots, k$ (that is, when $i\nu=\frac{2\pi q}{\ell_j}$) in which case
\[2\Delta \m_{\sigma_m}(\nu)=(-1)^m\mathop{\sum_{j=1}^k}_{e^{\nu\ell_j}=1} \Delta \m^o(\ell_j).\]

\section{Spaces with similar length spectrum}\label{s:Similar}
In order to prove Theorem \ref{t:NoPositiveDensity} (which excludes the possibility of a positive density threshold in Theorem \ref{t:Length2Laplace})
we need to find pairs of lattices for which the length spectrum is as similar as possible. To do this, we borrow the examples constructed by Leininger, McReynolds, Neumann, and Reid \cite{LeiningerMcReynoldsNeumannReid07} of non iso-spectral manifolds having the same length sets. Their construction is based on a modification of the Sunada method \cite{Sunada85}, for constructing non isometric manifolds having the same length spectrum. We start by recalling the Sunada method and in particular his formula for multiplicities of the length spectrum of a finite cover.

\subsection{Splitting of primitive geodesics in covers}
Let $\calM=\Gamma\bs \calH$ be as above, let $\Gamma_0\subseteq \Gamma$ denote a finite index subgroup and let $\calM_0\to \calM$ the corresponding finite cover. We recall the analogy between the splitting of prime ideals in number field extensions and splitting of primitive geodesics in finite covers.

Let $\frakp$ denote a primitive conjugacy class in $\Gamma$ (corresponding to a primitive geodesic in $\calM$). We say that a primitive conjugacy class $\frakP$ in $\Gamma_0$ lies above $\frakp$ (denoted by $\frakP|\frakp$) if there is some $\gamma\in \frakp$ and a natural number $f$ such that $\frakq=[\gamma^f]_{\Gamma_0}$. We call $f$ the degree of $\frakP$ and denote $f=\deg(\frakP)$ (this does not depend on the choice of representative $\gamma\in \frakp$). We note that if $\frakP_1,\ldots,\frakP_r$ are all the primitive classes above $\frakp$, then $\sum_{j=1}^r f_j=[\Gamma:\Gamma_0]$.

Assume further that $\Gamma_0$ is normal in $\Gamma$. Let $\frakP_1,\ldots,\frakP_r$ denote all the primitive conjugacy classes in $\Gamma_0$ lying above $\frakp$. Then the group $A=\Gamma/\Gamma_0$ acts naturally on  $\{\frakP_1,\ldots,\frakP_r\}$ by permutation.
To any class $\frakP|\frakp$ we attache an element $\sigma_{\frakP}\in A$ which generates the stabilizer of $\frakP$ in $A$ (specifically, if $\frakP=[\gamma^f]_{\Gamma_0}$ for some $\gamma\in \frakp$ then $\sigma_{\frakP}$ is the class of $\gamma$ in $A=\Gamma/\Gamma_0$). We note that the conjugacy class of $\sigma_{\frakP}$ in $A$ depends only on $\frakp$ and we denote it by $\sigma_{\frakp}$.

The analogy with splitting of prime ideals in extensions of number fields is evident from our choice of notation: primitive classes correspond to prime ideals, (normal) covers to (normal) field extensions, and the element $\sigma_{\frakP}$ and class $\sigma_\frakp$ to the Frobenius element and Frobenius class in the Galois group.

\subsection{The Sunada construction}\label{s:Sunada}
We recall the general setup for the Sunada construction. Let $\calM=\Gamma\bs \calH$ as above, let
$\Gamma_0\subset\Gamma$ denote a finite index normal subgroup and let $A=\Gamma/\Gamma_0$. Let $B\subset A$ denote a subgroup and let $\Gamma_0\subset\Gamma_B\subset\Gamma$ such that $\Gamma_B/\Gamma_0=B$.
Fix a primitive class $\frakp$ in $\Gamma$ and a class $\frakP|\frakp$ in $\Gamma_0$. Let $\frakq_1,\ldots,\frakq_r$ denote all the classes in $\Gamma_B$ above $\frakp$ and let $f_j=\deg(\frakq_j)$.

The length spectrum for $\Gamma_B$ can be recovered from information on $\Gamma$ and the data on the splitting degrees as follows:
\begin{equation}\label{e:multiplicities}\m^o_{\Gamma_B}(\ell)=\sum_{d=1}^n\sum_{\ell_{\frakp}=\ell/d}\calA_B(\frakp,d),\end{equation}
where $n=[\Gamma:\Gamma_0]$, the second sum is over all primitive classes in $\Gamma$ of length $\ell/d$
and
\begin{equation}\label{e:splitting}\calA_B(\frakp,d)=\#\{\frakq: \deg(\frakq)=d \mbox{ and } \frakq|\frakp\},\end{equation}
encodes the splitting type of $\frakp$.

The information on the splitting type encoded in $\calA_B(\frakp,d)$ can be recovered from information on the finite groups $A$ and $B$.
To do this, let $\chi_B$ denote the character of the representation of $A$ given by the action of $A$ on co-sets in $A/B$, that is \begin{equation}\label{e:chiB}\chi_B(a)=\frac{|Z_A(a)||([a]\cap B)|}{|B|}=\frac{|[a]\cap B|}{|[a]\cap A|}[A:B],\end{equation}
where $[a]$ denotes the conjugacy class of $a$ in $A$.
We then have that (see \cite[Proof of Theorem 2]{Sunada85})
\[\chi_B(\sigma_\frakp^m)=\sum_{f_j|m} f_j=\sum_{d|m}d\calA_B(\frakp,d),\]
and using M\"{o}bius inversion we get
\begin{equation}\label{e:Mobius}
\calA_B(\frakp,m)=\frac{1}{m}\sum_{d|m}\mu(m/d)\chi_B(\sigma_\frakp^d),
\end{equation}
where $\mu(m)$ is the M\"{o}bius function.

Since the value of $\chi_B(a)$ is determined by $|[a]\cap B|$, we see that if $B_1,B_2\subseteq A$ satisfy that
$|[a]\cap B_1|=|[a]\cap B_2|$ for all $a\in A$ then $\Gamma_{B_1}$ and $\Gamma_{B_2}$ are length equivalent.
In fact, this condition implies that the two spaces are also representation equivalent (see \cite{Sunada85}).

\subsection{Lattices with the same length sets}
Leininger McReynolds Neumann and Reid showed in \cite{LeiningerMcReynoldsNeumannReid07} that, in the same setup, if we assume the weaker condition that
$|[a]\cap B_1|\neq 0$ if and only if $|[a]\cap B_2|\neq 0$ for any $a\in A$, then $\Gamma_{B_1}$ and $\Gamma_{B_2}$ have the same length sets. Moreover, under additional assumptions on the groups $A,B_1,B_2$ (see \cite[Definition 2.2]{LeiningerMcReynoldsNeumannReid07}) one can guarantee that the primitive length sets are the same. In the same paper they constructed explicit examples of triples $B_1, B_2\subseteq A$ satisfying these conditions and showed that these groups can be realized as quotients of lattices in any of the rank one groups.

We briefly recall their examples. Let  $p\in \bbZ$ denote a prime number and assume that it is inert in the imaginary quadratic extension $K=\bbQ(i)$. In this case we have that the ideal $(p)=p\calO$ is a prime ideal in $\calO=\bbZ[i]$ and the quotient $\calO/p\calO=\bbF_q$ is the finite field with $q=p^2$ elements. We then have the following inclusion of exact sequences
\[\begin{array}{ccccccc}
1 \to  &V_p &\to &\PSL_2(\bbZ/p^2\bbZ)    & \to & \PSL_2(\bbF_p)&\to  1 \\
      &\downarrow     &    &  \downarrow            &     & \downarrow    &       \\
1 \to  &V_{q} &\to &\PSL_2(\calO/p^2\calO)& \to & \PSL_2(\bbF_q)&\to  1
\end{array},\]
where $V_p\cong\Sl_2(\bbF_p)=\{X\in \Mat_2(\bbF_p)|\Tr(X)=0\}$  via
$$\Sl_2(\bbF_p)\ni X\mapsto I+pX\in V_p,$$ and
and similarly $V_q\cong \Sl_2(\bbF_q)$. Inside $\Sl_2(\bbF_p)\cong \bbF_p^3$ we have the plane
$$R^\bot(\bbF_p)=\{\smatrix{x}{y}{-y}{-x}\in \Sl_2(\bbF_p)|x,y\in \bbF_p\}.$$
As shown in \cite{LeiningerMcReynoldsNeumannReid07}, in any rank one group $G$ there is a uniform lattice $\Gamma$ that surjects onto $A=\PSL_2(\calO/p^2\calO)$ for infinitely many values of $p$.
In this case, the lattices $\Gamma_{B_1}$ and $\Gamma_{B_2}$, corresponding to $B_1=V_p$ and $B_2=\{I+pX|X\in R^\bot(\bbF_p)\}$, have the same primitive length sets. We now take a closer look at this example and analyze the difference between the multiplicities.

\subsection{Computing the splitting types}
Let $\Gamma$ denote a uniform lattice in a rank one group $G$ and assume that there is a surjection
$$\vphi:\Gamma\twoheadrightarrow A=\SL_2(\calO/p^2\calO).$$
Consider the following finite index subgroup: $\Gamma_0=\ker(\vphi),\; \Gamma_{V_q}=\vphi^{-1}(V_{q})$ and
$\Gamma_j=\vphi^{-1}(B_j),\; j=1,2$ where $B_1=V_p=\{I+pX|X\in \Sl_2(\bbF_p)\}$ and $B_2=\{I+pX|X\in R^\bot(\bbF_p)\}$ as above.
We note that $\Gamma_{V_q}$ and $\Gamma_0$ are both normal subgroups of $\Gamma$.

To get hold of the splitting types of primitive classes $\frakp$ in $\Gamma$ we need to count how many elements lie in the intersection of each conjugacy class with each of the above subgroups.
Since $V_q$ is normal in $\PSL_2(\calO/p^2\calO)$ a conjugacy class intersects $V_q$ if and only if it is contained there.
Furthermore, since $V_q$ is commutative, the action of $\PSL_2(\calO/p^2\calO)$ on $V_q$ by conjugation factors through the quotient $\PSL_2(\bbF_q)$. The following lemma describes the $\PSL_2(\bbF_q)$ conjugacy classes in $V_q=\Sl_2(\bbF_q)$.
\begin{lem}{\cite[Lemma 4.2]{LeiningerMcReynoldsNeumannReid07}}\label{l:VP}
For each $Q\in \bbF_q^*$ the set
$$\calC_Q(\bbF_q)=\{X\in \Sl_2(\bbF_q)|\det(X)=-Q\}$$
is a conjugacy class. In addition to these classes and the trivial class there are two nilpotent classes with representatives $\left(\begin{smallmatrix} 0 & 0 \\ 1& 0\end{smallmatrix}\right)$ and $\left(\begin{smallmatrix} 0 & 0 \\ \eps & 0\end{smallmatrix}\right)$ with $\eps\in \bbF_q^*$ not a square. Moreover, for any $X\in \Sl_2(\bbF_q)$ the size of its conjugacy class is given by
\[|[X]|=\left\{\begin{array}{ll}
1& X=0\\
q(q+1) & X\in \calC_Q,\; Q\in (\bbF_q^*)^2\\
q(q-1) & X\in \calC_Q,\; Q\in \bbF_q^*\setminus (\bbF_q^*)^2\\
\frac{q^2-1}{2} & X \mbox{ is nilpotent.}
\end{array}\right.\]
\end{lem}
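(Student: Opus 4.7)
The plan is to reduce to rational canonical form over $\bbF_q$ and then apply orbit-stabilizer. Since $-I$ acts trivially on $V_q$ by conjugation, the conjugation actions of $\PSL_2(\bbF_q)$ and $\SL_2(\bbF_q)$ have the same orbits on $V_q$, so I would work throughout with $\SL_2(\bbF_q)$ and at the end divide centralizer orders by $2$ to pass to $\PSL_2$. The determinant is conjugation-invariant, so the sets $\calC_Q$ and the nilpotent cone are each unions of orbits; it remains to identify the orbits inside each.

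For $X \in V_q$ with $\det(X) = -Q$ and $Q \neq 0$, the characteristic polynomial $\lambda^2 - Q$ has distinct roots in $\overline{\bbF_q}$ (as $q = p^2$ is odd, because $p$ is inert in $\bbZ[i]$ and hence odd), so it equals the minimal polynomial of $X$. Thus $X$ is $\GL_2(\bbF_q)$-conjugate to its rational canonical form $\smatrix{0}{Q}{1}{0}$, so $\calC_Q$ is a single $\GL_2$-orbit. To upgrade to $\SL_2$-conjugacy, I would check that $\det : Z_{\GL_2}(X) \twoheadrightarrow \bbF_q^*$ is surjective: the centralizer is the commutative algebra $\bbF_q[X]^\times$, which is a split torus $\cong (\bbF_q^*)^2$ when $Q \in (\bbF_q^*)^2$ and a non-split torus $\cong \bbF_{q^2}^*$ otherwise; in both cases $\det$ surjects onto $\bbF_q^*$. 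Hence $\calC_Q$ is already a single $\SL_2$-orbit, with $\SL_2$-centralizer of order $q-1$ in the split case and $q+1$ in the non-split case.

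For nilpotent $X \neq 0$, the kernel is a line; choosing an $\SL_2$-basis with first vector spanning $\ker X$ brings $X$ to the form $\smatrix{0}{0}{c}{0}$ with $c \in \bbF_q^*$. Conjugation by $\smatrix{t}{0}{0}{t^{-1}} \in \SL_2$ sends $c \mapsto t^{-2}c$, and any $\SL_2$-element stabilizing the line $\bbF_q e_1$ rescales $c$ in the same way, so the $\SL_2$-orbit of $X$ is determined by the class of $c$ in $\bbF_q^*/(\bbF_q^*)^2$. Since this quotient has order $2$, there are exactly two nilpotent $\SL_2$-orbits, with representatives $\smatrix{0}{0}{1}{0}$ and $\smatrix{0}{0}{\eps}{0}$ for $\eps$ a non-square. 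A direct matrix computation shows that the $\SL_2$-centralizer of $\smatrix{0}{0}{1}{0}$ consists of $\smatrix{\pm 1}{0}{s}{\pm 1}$, $s \in \bbF_q$, of order $2q$.

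The class sizes now follow from orbit-stabilizer applied in $\PSL_2(\bbF_q)$, whose order is $q(q^2-1)/2$. Since $-I$ lies in each $\SL_2$-centralizer above, the $\PSL_2$-centralizer orders are $(q-1)/2$, $(q+1)/2$, and $q$ in the split, non-split, and nonzero nilpotent cases, yielding class sizes $q(q+1)$, $q(q-1)$, and $(q^2-1)/2$ respectively, together with the trivial class $\{0\}$. The one point requiring care is that the two nilpotent $\SL_2$-orbits do not merge when passing to $\PSL_2$; this is automatic since $-I$ acts trivially on $V_q$, so the $\PSL_2$-orbits on $V_q$ literally coincide with the $\SL_2$-orbits.
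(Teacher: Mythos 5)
Your proof is correct. There is nothing in the paper to compare it against: the lemma is quoted verbatim from \cite[Lemma 4.2]{LeiningerMcReynoldsNeumannReid07} with no proof given, and your rational-canonical-form plus orbit--stabilizer argument is the standard one; the centralizer orders are right, the passage from $\SL_2$ to $\PSL_2$ is handled correctly (using that $-I$ lies in every centralizer), and the class sizes sum to $q^3=|\Sl_2(\bbF_q)|$ as they must. One cosmetic slip: taking the \emph{first} basis vector to span $\ker X$ puts a nonzero nilpotent in the form $\smatrix{0}{c}{0}{0}$ rather than $\smatrix{0}{0}{c}{0}$ (the kernel of the stated representative $\smatrix{0}{0}{1}{0}$ is spanned by the second basis vector); this is immaterial, as the two conventions differ by an inner automorphism and the rest of your computation is consistent with the lower-triangular form.
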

We now compute the size of the intersection of each of these classes with $V_p\cong \Sl_2(\bbF_p)$ and $R^\bot(\bbF_p)$.

\begin{lem}\label{l:Vp}
For any $X\in \Sl_2(\bbF_q)$ we have
\[|[X]\cap \Sl_2(\bbF_p)|=\left\lbrace\begin{array}{ll}
0 & X\in \calC_Q(\bbF_q),\; Q\in \bbF_q^*\setminus \bbF_p^*\\
1 & X=0\\
p(p+1) &  X\in \calC_Q(\bbF_q),\; Q\in (\bbF_p^*)^2\\
p(p-1) & X\in \calC_Q(\bbF_q),\; Q\in \bbF_p^*\setminus (\bbF_p^*)^2\\
\frac{p^2-1}{2} & X \mbox{ is nilpotent}\\
\end{array}\right.\]
\[|[X]\cap R^\bot(\bbF_p)|=\left\lbrace\begin{array}{ll}
0 & X\in \calC_Q(\bbF_q),\; Q\in \bbF_q^*\setminus \bbF_p^*\\
1 & X=0\\
p-1 & otherwise
\end{array}\right.\]
\end{lem}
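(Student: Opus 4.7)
The proof proceeds by case analysis following the classification in Lemma \ref{l:VP}. The guiding observation throughout is that any matrix in $\Sl_2(\bbF_p)$ or in $R^\bot(\bbF_p)$ has entries, and therefore determinant, in $\bbF_p$. Since $\calC_Q(\bbF_q)$ is cut out by $\det = -Q$, this immediately disposes of the cases $Q \in \bbF_q^* \setminus \bbF_p^*$ (empty intersection) and $X = 0$ (trivial intersection of size one).

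For $X \in \calC_Q(\bbF_q)$ with $Q \in \bbF_p^*$, the intersection $\calC_Q(\bbF_q) \cap \Sl_2(\bbF_p)$ is precisely $\calC_Q(\bbF_p)$, i.e.\ a single $\PSL_2(\bbF_p)$-orbit in $\Sl_2(\bbF_p)$; applying Lemma \ref{l:VP} with $\bbF_q$ replaced by $\bbF_p$ immediately yields the sizes $p(p+1)$ or $p(p-1)$, according to whether $Q$ is a square in $\bbF_p^*$ or not. For the intersection with $R^\bot(\bbF_p)$, I would parameterize $X = \smatrix{x}{y}{-y}{-x}$ with $x, y \in \bbF_p$, so that $\det X = y^2 - x^2$; factoring $(x-y)(x+y) = Q$ and using that $2$ is invertible (as $p$ is odd) sets up a bijection between solutions and $u \in \bbF_p^*$ via $(x, y) = \bigl((u + Q/u)/2,\, (Q/u - u)/2\bigr)$, yielding $p-1$ solutions.

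The nilpotent case rests on the arithmetic fact $\bbF_p^* \subseteq (\bbF_q^*)^2$ for odd $p$, which follows from $a^{(q-1)/2} = (a^{p-1})^{(p+1)/2} = 1$ for $a \in \bbF_p^*$. Picking $\alpha \in \bbF_q^*$ with $\alpha^2 = \eps^{-1}$ and conjugating $\smatrix{0}{0}{\eps}{0}$ by $\smatrix{\alpha}{0}{0}{\alpha^{-1}} \in \SL_2(\bbF_q)$ produces $\smatrix{0}{0}{1}{0}$, so the two $\PSL_2(\bbF_p)$-nilpotent orbits in $\Sl_2(\bbF_p)$ fuse into a single $\PSL_2(\bbF_q)$-nilpotent orbit in $V_q$, while the other $\PSL_2(\bbF_q)$-nilpotent orbit has empty intersection with $V_p$ and with $R^\bot(\bbF_p)$. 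The precise intersection counts then follow either by summing the two $\bbF_p$-orbit sizes supplied by Lemma \ref{l:VP} over $\bbF_p$, or by directly solving $x^2 = y^2$ in the $R^\bot$ parameterization.

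The main technical subtlety is the nilpotent case: one must notice that the non-square $\eps \in \bbF_p^*$ becomes a square once we pass to $\bbF_{p^2}$, which collapses the two $\bbF_p$-nilpotent classes onto a single $\bbF_q$-nilpotent class. Outside of this observation, the proof is a direct bookkeeping of determinant levels combined with an application of Lemma \ref{l:VP} one level down.
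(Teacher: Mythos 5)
Your treatment of the zero class, of $\calC_Q(\bbF_q)$ with $Q\in\bbF_q^*\setminus\bbF_p^*$, and of $\calC_Q(\bbF_q)$ with $Q\in\bbF_p^*$ is correct and is the same as the paper's: the determinant bookkeeping gives the first two lines, the identity $\calC_Q(\bbF_q)\cap\Sl_2(\bbF_p)=\calC_Q(\bbF_p)$ together with Lemma \ref{l:VP} over $\bbF_p$ gives the entries $p(p\pm1)$, and the factorization $(x-y)(x+y)=Q$ gives the $p-1$ points of $\calC_Q(\bbF_q)\cap R^\bot(\bbF_p)$.

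The nilpotent case is where your writeup breaks down, and not in a cosmetic way: your fusion observation is correct, but it yields values \emph{different} from the ones in the statement, and you never confront the conflict. Since every element of $\bbF_p^*$ is a square in $\bbF_q^*$, both $\bbF_p$-nilpotent classes, with representatives $\smatrix{0}{0}{1}{0}$ and $\smatrix{0}{0}{\eps_p}{0}$ for $\eps_p$ a nonsquare of $\bbF_p^*$, land in the single $\PSL_2(\bbF_q)$-class of $\smatrix{0}{0}{1}{0}$, exactly as you argue. Summing the two $\bbF_p$-orbit sizes from Lemma \ref{l:VP} therefore gives $|[X]\cap\Sl_2(\bbF_p)|=p^2-1$ for that class and $0$ for the class of $\smatrix{0}{0}{\eps}{0}$ with $\eps$ an $\bbF_q$-nonsquare; likewise $|[X]\cap R^\bot(\bbF_p)|=2(p-1)$ and $0$. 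These are not the values $\tfrac{p^2-1}{2}$ and $p-1$ asserted in the lemma (they agree only after averaging over the two nilpotent classes), so your closing claim that ``the precise intersection counts then follow'' is false as written: what follows from your argument is a corrected statement. Note that the paper's own proof asserts that each $\bbF_q$-nilpotent class meets $\Sl_2(\bbF_p)$ in ``the corresponding'' $\SL_2(\bbF_p)$-class --- precisely the class-by-class correspondence that your fusion argument refutes --- so you must either prove the corrected entries and flag the discrepancy (which then propagates into the values of $\chi_{B_j}$ on nilpotent elements later in the section), or find an error in the fusion argument; I do not see one. Two smaller points: in the conjugation you want $\alpha^2=\eps_p$, not $\alpha^2=\eps_p^{-1}$ (the latter produces $\smatrix{0}{0}{\eps_p^2}{0}$); and you use $\eps$ both for the $\bbF_p$-nonsquare being conjugated away and for the $\bbF_q$-nonsquare labelling the second class in Lemma \ref{l:VP} --- these are different elements, and only the former becomes a square in $\bbF_q$.
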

\begin{proof}
The first part follows from the previous lemma together with the observation that
$\calC_Q(\bbF_q)\cap \Sl_2(\bbF_p)=\calC_Q(\bbF_p)$ and that the intersection of a nilpotent conjugacy class with $\Sl_2(\bbF_p)$ is the corresponding $\SL_2(\bbF_p)$ conjugacy class.

For the second part, note that
$$\calC_Q(\bbF_q)\cap R^\bot(\bbF_p)=\{\smatrix{x}{y}{-y}{-x}|x,y\in \bbF_p, x^2-y^2=Q\},$$
which is of size $p-1$ for any $Q\in \bbF_p^*$. The only nontrivial elements in $R^\bot$ not yet accounted for are the ones with zero determinant. There are $2(p-1)$ such elements and since $X$ is conjugated to $\left(\begin{smallmatrix} 0 & 0 \\ 1 & 0\end{smallmatrix}\right)$ if and only if $\eps X$ is conjugated to $\left(\begin{smallmatrix} 0 & 0 \\ \eps & 0\end{smallmatrix}\right)$ we have exactly $p-1$ elements in each of the remaining two conjugacy classes.
\end{proof}

Fix a primitive conjugacy class $\frakp$ in $\Gamma$, a primitive class $\frakP$ in $\Gamma_0$ above $\frakp$, and let $\sigma_\frakP\in A$ denote the ``Frobenius" element. (That is, $\sigma_{\frakP}=\vphi(\gamma)$ for $\gamma\in\frakp$ satisfying that $[\gamma^{\deg(\frakP)}]_{\Gamma_0}=\frakP$).
Since $\Gamma_{V_q}$ is normal in $\Gamma$, there is a natural number $d_0=d_0(\frakp)$ such that any $\gamma\in \frakp$ satisfies that $\gamma^{d_0}$ is primitive in $\Gamma_{V_q}$. Consequently, we have that $\sigma_\frakP^{d}\in V_q$ if and only if $d_0|d$ and that $\calA_{V_q}(\frakp,d)=\frac{|A|}{|V_q|}\delta_{d,d_0}$. We now compute the splitting types for $B_1$ and $B_2$.
\begin{lem}\label{l:splitting}
For $j=1,2$. If $\sigma_\frakp^{d_0}=1$ then
$\calA_{B_j}(\frakp,d)=\frac{|A|}{d_0|B_j|}\delta_{d,d_0}$.
Otherwise
\begin{eqnarray*}
\calA_{B_j}(\frakp,d)=\frac{\chi_{B_j}(\sigma_{\frakp}^{d_0})}{d_0} \delta_{d,d_0} +\frac{(\frac{|A|}{|B_j|}-\chi_{B_j}(\sigma_{\frakp}^{d_0}))}{pd_0} \delta_{d,pd_0}
.\\
\end{eqnarray*}
\end{lem}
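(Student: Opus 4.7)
The strategy is to apply the M\"obius inversion formula \eqref{e:Mobius}, which reduces the problem to computing $\chi_{B_j}(\sigma_\frakp^d)$ for every $d\geq 1$. Since $V_q$ is normal in $A$ and $B_j\subseteq V_q$, the $A$-conjugacy class $[\sigma_\frakp^d]$ meets $B_j$ only when $\sigma_\frakp^d\in V_q$, which by the definition of $d_0$ happens exactly when $d_0\mid d$. For such $d$, write $d=d_0k$ and set $\sigma_\frakp^{d_0}=I+pX_0$ with $X_0\in \Sl_2(\bbF_q)$ under the identification $V_q\cong\Sl_2(\bbF_q)$. Because $(pX_0)^2\equiv 0\pmod{p^2}$, the binomial expansion gives $\sigma_\frakp^{d_0k}=I+kpX_0$, so that $\sigma_\frakp^d$ corresponds to $kX_0\in V_q$.

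If $\sigma_\frakp^{d_0}=1$, equivalently $X_0=0$, then $\sigma_\frakp^d=1$ for every multiple of $d_0$, and $\chi_{B_j}(\sigma_\frakp^d)=[A:B_j]\mathbf{1}_{d_0\mid d}$. Substituting into \eqref{e:Mobius} and invoking the elementary identity $\sum_{j\mid k}\mu(k/j)=\delta_{k,1}$ yields $\calA_{B_j}(\frakp,d)=\frac{|A|}{d_0|B_j|}\delta_{d,d_0}$, which is the first assertion.

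Suppose now $X_0\neq 0$. When $p\mid k$ we have $kX_0=0$ in $V_q$, so $\chi_{B_j}(\sigma_\frakp^d)=[A:B_j]$. When $\gcd(k,p)=1$ I claim $\chi_{B_j}(kX_0)=\chi_{B_j}(X_0)$. By Lemmas \ref{l:VP} and \ref{l:Vp}, both $|[Y]|$ and $|[Y]\cap B_j|$ for $Y\in V_q$ are determined by: (i) whether $Y$ is zero, nilpotent, or semisimple; (ii) whether $\det(Y)$ lies in $\bbF_p^*$ or $\bbF_q^*\setminus\bbF_p^*$; and (iii) whether $\det(Y)$ is a square in the relevant field. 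Passing from $X_0$ to $kX_0$ multiplies the determinant by $k^2$, which is a square in both $\bbF_p^*$ and $\bbF_q^*$, so all three invariants are preserved. For the nilpotent case one additionally uses that each nilpotent class is fixed by the scaling $Y\mapsto kY$, which in turn uses that every element of $\bbF_p^*$ is a square in $\bbF_q^*$ (since $\bbF_p^*$ sits in the index-two subgroup of squares whenever $p$ is odd); in any event the two nilpotent classes carry identical intersection counts with $B_1$ and with $B_2$, so the invariance persists even if they were swapped.

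Setting $c=\chi_{B_j}(\sigma_\frakp^{d_0})$ and $C=|A|/|B_j|$, the analysis above gives
\[ \chi_{B_j}(\sigma_\frakp^d)=c\,\mathbf{1}_{d_0\mid d}+(C-c)\,\mathbf{1}_{pd_0\mid d}. \]
Plugging this into \eqref{e:Mobius} and applying the M\"obius identity $\sum_{j\mid k}\mu(k/j)=\delta_{k,1}$ to each of the two resulting sums (one supported on divisors of $d$ divisible by $d_0$, the other on those divisible by $pd_0$) produces the second formula. The main technical obstacle is the invariance $\chi_{B_j}(kX_0)=\chi_{B_j}(X_0)$ for $k\in\bbF_p^*$; once this is in hand, everything else is routine M\"obius bookkeeping.
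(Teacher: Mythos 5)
Your proposal is correct and follows essentially the same route as the paper: reduce to computing $\chi_{B_j}(\sigma_\frakp^{kd_0})$, show it equals $|A|/|B_j|$ when $p\mid k$ and is independent of $k$ when $(k,p)=1$ (via $\det(kX_0)=k^2\det(X_0)$ and the stability of the nilpotent classes), then finish by M\"obius inversion; the only difference is cosmetic bookkeeping, as the paper splits the inversion into the cases $(m,p)=1$, $m=p\tilde m$, $m=p^k\tilde m$ while you package $\chi_{B_j}(\sigma_\frakp^d)$ as a sum of two indicator functions first. Your handling of the nilpotent classes is in fact slightly more careful than the paper's, since you note both that $\bbF_p^*\subset(\bbF_q^*)^2$ fixes each class and that the conclusion would survive even if the two classes were swapped.
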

\begin{proof}
Since $\sigma_{\frakp}^d\in V_q$ if and only if $d_0|d$ we get that $\chi_{B_j}(\sigma_\frakp^d)=0$ unless $d=kd_0$ for some $k\in \bbN$, and hence $\calA_{B_j}(\frakp,d)=0$ unless $d=md_0$ for some $m\in \bbN$.

If $\sigma^{d_0}=1$ then $\sigma_{\frakp}^{kd_0}=1$ for all $k\in \bbN$, and hence $\chi_{B_j}(\sigma_\frakp^{kd_0})=\frac{|A|}{|B_j|}$ for all $k\in \bbN$. Using \eqref{e:Mobius} we get that in this case
\[\calA_{B_j}(\frakp,md_0)=\frac{1}{md_0}\sum_{k|m}\mu(\frac{m}{k})\chi_{B_j}(\sigma_\frakp^{kd_0})=\frac{|A|}{d_0|B_j|}\delta_{m,1}.\]

When $\sigma_\frakP^{d_0}\neq 1$ we write $\sigma_{\frakp}^{d_0}=I+pX_0$ with $X_0\in \Sl_2(\bbF_q)$ nontrivial.
In this case, $\sigma_{\frakp}^{kd_0}=(I+pX_0)^k=I+pkX_0$. In particular, if $p|k$ then $\sigma_{\frakp}^{kd_0}=1$ and again $\chi_{B_j}(\sigma_\frakp^{kd_0})=\frac{|A|}{|B_j|}$.
For $k$ prime to $p$, if $X_0\in \calC_Q$ then $kX_0\in \calC_{k^2Q}$ and if $X\sim \smatrix00c0$ then $kX\sim \smatrix00{kc}0$. Thus, the sizes of the conjugacy classes of $[kX_0]$ and $[kX_0]\cap B_j$ don't depend on $k$ so
$\chi_{B_j}(\sigma_\frakp^{kd_0})=\chi_{B_j}(\sigma_\frakp^{d_0})$.
Using \eqref{e:Mobius} we get
\begin{eqnarray*}
\calA_{B_j}(\frakp,d_0m)&=&\frac{1}{d_0m}\sum_{k|m}\mu(\frac{m}{k})\chi_{B_j}(\sigma_{\frakp}^{kd_0})\\
&=&\frac{\chi_{B_j}(\sigma_{\frakp}^{d_0})}{d_0m}\mathop{\sum_{k|m}}_{(k,p)=1}\mu(\frac{m}{k})+
\frac{|A|}{|B_j|d_0m}\mathop{\sum_{k|m}}_{p|k}\mu(\frac{m}{k}).
\end{eqnarray*}
When $(m,p)=1$, the second sum is empty and the first sum is $\sum_{k|m}\mu(\frac{m}{k})=\delta_{m,1}$.  When $m=p\tilde{m}$ with $(\tilde{m},p)=1$, the second sum is $\delta_{\tilde{m},1}$ and the first sum is $$\mathop{\sum_{k|p\tilde{m}}}_{(k,p)=1}\mu(\frac{m}{k})=-\sum_{k|\tilde{m}}\mu(\frac{\tilde{m}}{k})=-\delta_{\tilde{m},1}.$$ Finally, when $m=p^k\tilde{m}$ with $k>1$, both sums vanish. We thus get that in any case where $X_0\neq 0$ we have
\begin{eqnarray*}
\calA_{B_j}(\frakp,d_0m)=\frac{\delta_{m,1}}{d_0}\chi_{B_j}(\sigma_{\frakp}^{d_0})+\frac{\delta_{m,p}}{pd_0}
(\frac{|A|}{|B_j|}-\chi_{B_j}(\sigma_{\frakp}^{d_0})).\\
\end{eqnarray*}
\end{proof}
\subsection{Proof of Theorem \ref{t:NoPositiveDensity}}

Theorem \ref{t:NoPositiveDensity} follows from the following proposition with $p$ a sufficiently large inert prime.
\begin{prop}
Let $\Gamma_2\subseteq \Gamma_1\subseteq \Gamma$ be as above. Then there is $\alpha<2\rho$ such that
$$\DL(\Gamma_1,\Gamma_2;T)\leq \frac{4\mathrm{Li}(e^{2\rho T})}{p+1}+O(e^{\alpha T}).$$
\end{prop}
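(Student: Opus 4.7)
The plan is to combine the multiplicity formula \eqref{e:multiplicities} with Lemma~\ref{l:splitting}, and then control the resulting sum by a Chebotarev-type equidistribution of Frobenius classes for primitive geodesics in $\Gamma$. Subtracting \eqref{e:multiplicities} for $j=1,2$,
$$\Delta\m^o(\ell)=\sum_{\ell_\frakp\mid\ell}\bigl(\calA_{B_1}(\frakp,\ell/\ell_\frakp)-\calA_{B_2}(\frakp,\ell/\ell_\frakp)\bigr),$$
where $\frakp$ runs over primitive classes in $\Gamma$. Lemma~\ref{l:splitting} shows that the only nonzero degrees are $d_0=d_0(\frakp)$ and $pd_0$, where $d_0$ is the order of $\sigma_\frakp$ in $A/V_q\cong\PSL_2(\bbF_q)$. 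Hence
$$\DL(\Gamma_1,\Gamma_2;T)\leq S_1(T)+S_{\geq 2}(T)+S_p(T),$$
split according as $d=d_0=1$, $d=d_0\geq 2$, or $d=pd_0$.

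First I would dispose of the two error sums. In $S_p$ the length condition forces $\ell_\frakp\leq T/p$; combined with the uniform bound (in $T$) on each summand coming from Lemma~\ref{l:splitting} and the Prime Geodesic Theorem this gives $S_p(T)=O(e^{2\rho T/p})$. Similarly $d_0\geq 2$ forces $\ell_\frakp\leq T/2$, whence $S_{\geq 2}(T)=O(e^{\rho T})$. Both are $O(e^{\alpha T})$ for some $\alpha<2\rho$.

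For the main term $S_1(T)$, the condition $d_0=1$ reads $\sigma_\frakp\in V_q$, so I write $\sigma_\frakp=I+pX_0$ with $X_0\in\Sl_2(\bbF_q)$. Inserting Lemmas~\ref{l:VP} and \ref{l:Vp} into the formula of Lemma~\ref{l:splitting} shows that $|\calA_{B_1}(\frakp,1)-\calA_{B_2}(\frakp,1)|$ depends only on the $\PSL_2(\bbF_q)$-conjugacy class of $X_0$, and vanishes whenever $X_0\in\calC_Q$ with $Q\in\bbF_q^*\setminus(\bbF_p^*)^2$ (as there either $|[X_0]\cap V_p|=0$ or the two characters $\chi_{B_1},\chi_{B_2}$ coincide). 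The only non-vanishing cases are $X_0=0$, $X_0\in\calC_Q$ with $Q\in(\bbF_p^*)^2$, and $X_0$ nonzero nilpotent, each giving an explicit value that is a rational function of $p$ times $|A|$.

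Now the quantitative equidistribution of Frobenius---a direct consequence of the Prime Geodesic Theorem applied to the normal cover $\Gamma_0\triangleleft\Gamma$ together with character orthogonality on $A$---yields
$$\#\bigl\{\frakp\text{ prim in }\Gamma:\ell_\frakp\leq T,\ [\sigma_\frakp]_A=[a]_A\bigr\}=\tfrac{|[a]_A|}{|A|}\,\mathrm{Li}(e^{2\rho T})+O(e^{\alpha T})$$
for some $\alpha<2\rho$. Weighting the per-class values by this density and summing (using $|V_q|=p^6$, $|B_1|=p^3$, $|B_2|=p^2$, and the conjugacy-class sizes from Lemma~\ref{l:VP}), the main term of $S_1(T)$ collapses to $\tfrac{2(p-1)}{p^2}\mathrm{Li}(e^{2\rho T})$. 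The elementary inequality $\tfrac{2(p-1)}{p^2}\leq\tfrac{4}{p+1}$ (equivalent to $2p^2+2\geq 0$) then delivers the claimed bound. The main technical point is thus the quantitative Chebotarev equidistribution with a power-saving error, which should be imported from the prime geodesic literature; everything else is combinatorial bookkeeping.
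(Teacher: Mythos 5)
Your argument is correct in structure and reaches the stated bound, but by a genuinely different route from the paper. The paper never invokes equidistribution of the Frobenius classes $\sigma_\frakp$: it applies the Prime Geodesic Theorem to $\Gamma_1$ and $\Gamma_2$ separately, so that the \emph{signed} sum $\sum_{\ell\le T}(\m^o_{\Gamma_2}(\ell)-\m^o_{\Gamma_1}(\ell))$ is $O(e^{\alpha T})$; this forces an identity expressing the sum of all positively-weighted partial counts in terms of $\pi_{\mathrm{qr}}(T)$ alone, after which the unsigned sum is bounded by $\tfrac{4}{p^3+p}\pi_{\mathrm{qr}}(T)$ and $\pi_{\mathrm{qr}}(T)$ is bounded trivially by $\tfrac{p^3+p}{p+1}\sum_{\ell\le T}\m^o_{\Gamma_1}(\ell)$. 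You instead evaluate each partial count directly via an effective Chebotarev theorem for the cover $\Gamma_0\bs\calH\to\Gamma\bs\calH$ with power-saving error. That statement is true (it follows from the trace formula for $\Gamma_0$ twisted by the irreducible characters of $A$, using that the eigenvalue $0$ is simple on $\Gamma_0\bs\calH$ so the nontrivially twisted zeta functions have no pole at $s=2\rho$; the saving depends on the spectral gap of the fixed cover), but it is precisely the input the paper's positivity trick is engineered to avoid, so you must prove or cite it precisely rather than defer it — this is the only real obligation left open in your sketch. Your bookkeeping is right: the $d=pd_0$ and $d=d_0\ge2$ blocks are $O(e^{2\rho T/p})$ and $O(e^{\rho T})$; the degree-one difference is $\calA_{B_1}(\frakp,1)-\calA_{B_2}(\frakp,1)=\tfrac{|A|}{|[X_0]|}\bigl(\tfrac{|[X_0]\cap B_1|}{|B_1|}-\tfrac{|[X_0]\cap B_2|}{|B_2|}\bigr)$, which vanishes exactly for the irregular and non-quadratic classes; and weighting by the densities $|[X_0]|/|A|$ gives
\begin{equation*}
\sum_{[X_0]}\Bigl|\tfrac{|[X_0]\cap B_1|}{p^3}-\tfrac{|[X_0]\cap B_2|}{p^2}\Bigr|=\tfrac{p-1}{p^3}+\tfrac{p-1}{p^2}+\tfrac{(p-1)^2}{p^3}=\tfrac{2(p-1)}{p^2},
\end{equation*}
confirming your constant (and agreeing with the asymptotic value of the paper's bound $\tfrac{4}{p^3+p}\pi_{\mathrm{qr}}(T)$). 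What your approach buys is the asymptotically sharp coefficient $\tfrac{2(p-1)}{p^2}\sim 2/p$, strictly smaller than $\tfrac{4}{p+1}$; what the paper's approach buys is complete independence from any effective Chebotarev input beyond the Prime Geodesic Theorem for the three lattices themselves.
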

\begin{proof}
We separate the primitive classes in $\Gamma$ into $5$ types as follows. Let $d_0=d_0(\frakp)$ as above and write $\sigma_{\frakP}^{d_0}=I+pX_0$ with $X_0\in \Sl_2(\bbF_q)$. We then say that
$\frakp$ is of \textit{trivial type} if $X_0=0$, and of \textit{nilpotent type} if $X_0$ is nilpotent, and we say that $\frakp$ is of \textit{irregular type}, \textit{quadratic type}, or \textit{non-quadratic type} if $X_0\in \calC_Q(\bbF_q)$ with $Q\in \bbF_q^*\setminus \bbF_p^*,\;Q\in (\bbF_p^*)^2$ or $Q\in \bbF_p^*\setminus (\bbF_p^*)^2$ respectively (recall that $\bbF^*_{p}\subset (\bbF_{q}^*)^2$).
Note that even though $X_0$ depends on the choice of $\frakP$ above $\frakp$, this characterization depends only on $\frakp$.

Consider the partial counting functions $\m_{\rm{tr}},\;\m_{\rm{ir}},\; \m_{\rm{qr}},\;\m_{\rm{nq}}$ and $\m_{\rm{ni}}$ each given by
\[\m_I(\ell,d)=\#\{\frakp \mbox{ of type } I| d_0(\frakp)=d \mbox{ and } \ell_\frakp=\ell/d\}.\]
where $I$ is one of the types: trivial, irregular, quadratic, non-quadratic, and nilpotent.
For the non-trivial types we define additional counting functions
\[\tilde{\m}_I(\ell,d)=\#\{\frakp \mbox{ of type } I| pd_0(\frakp)=d \mbox{ and } \ell_\frakp=\ell/d\}.\]
Let $n=|A/B_1|=p^3|\PSL_2(\bbF_p)|=\frac{p^4(p^2-1)}{2}$.
Using Lemma \ref{l:Vp} we can compute $\chi_{B_j}(\sigma_\frakp^{d_0})$ for each type of $\frakp$ and using Lemma \ref{l:splitting} we get that
\[\calA_{B_1}(\frakp,d)=\frac{n}{d_0}\left\lbrace\begin{array}{ll}
\delta_{d,d_0} & \frakp \mbox{ is trivial}\\
&\\
\frac{1}{p}\delta_{d,pd_0} & \frakp \mbox{ is irregular}\\
&\\
\frac{p+1}{p^3+p}\delta_{d,d_0}+\frac{p^3-1}{p^4+p^2}\delta_{d,pd_0} & \frakp \mbox{ is quadratic}\\
&\\
\frac{p-1}{p^3+p}\delta_{d,d_0}+\frac{p^3+1}{p^4+p^2}\delta_{d,pd_0} & \frakp \mbox{ is non-quadraric}\\
&\\
\frac{p^2-1}{p^4-1}\delta_{d,d_0}+\frac{p^3-p}{p^4-1}\delta_{d,pd_0} & \frakp \mbox{ is nilpotent}\\
\end{array}\right.\]
and
\[\calA_{B_2}(\frakp,d)=\frac{n}{d_0}\left\lbrace\begin{array}{ll}
p\delta_{d,d_0} & \frakp \mbox{ is trivial}\\
&\\
\delta_{d,pd_0} & \frakp \mbox{ is irregular}\\
&\\
\frac{p-1}{p^3+p}\delta_{d,d_0}+\frac{p^4+p^2-p+1}{p^4+p^2}\delta_{d,pd_0} & \frakp \mbox{ is quadratic}\\
&\\
\frac{p-1}{p^3+p}\delta_{d,d_0}+\frac{p^4+p^2-p+1}{p^4+p^2}\delta_{d,pd_0}  & \frakp \mbox{ is non-quadraric}\\
&\\
\frac{p^3-p}{p^4-1}\delta_{d,d_0}+\frac{p^4-2p+1}{p^4-1}\delta_{d,pd_0} & \frakp \mbox{ is nilpotent}\\
\end{array}\right.\]

Using \eqref{e:multiplicities} for $\m_{\Gamma_j}^o(\ell)$ and summing separately over classes of each type we get that
\begin{eqnarray*}
\lefteqn{\m^o_{\Gamma_1}(\ell)=\sum_{d|n}\frac{n}{d}\bigg( \m_{\rm{tr}}(\ell,d)+\tfrac{p+1}{p^3+p}\m_{\rm{qr}}(\ell,d)}\\
&&+\tfrac{p-1}{p^3+p}\m_{\rm{nq}}(\ell,d)+\tfrac{p^2-1}{p^4-1}\m_{\rm{ni}}(\ell,d)+\tfrac{1}{p}\tilde{\m}_{\rm{ir}}(\ell,d)\\
&&+\tfrac{p^3-1}{p^4+p^2}\tilde{\m}_{\rm{qr}}(\ell,d)+\tfrac{p^3+1}{p^4+p^2}\tilde{\m}_{\rm{nq}}(\ell,d)+\tfrac{p^3-p}{p^4-1}\tilde{\m}_{\rm{ni}}(\ell,d)\bigg),
\end{eqnarray*}
and
\begin{eqnarray*}
\lefteqn{\m^o_{\Gamma_2}(\ell)=\sum_{d|n}\frac{n}{d}\bigg( p\m_{\rm{tr}}(\ell,d)+\tfrac{p-1}{p^3+p}\m_{\rm{qr}}(\ell,d)+\tfrac{p-1}{p^3+p}\m_{\rm{nq}}(\ell,d)}\\
&&+\tfrac{p^3-p}{p^4-1}\m_{\rm{ni}}(\ell,d)+\tilde{\m}_{\rm{ir}}(\ell,d)+\tfrac{p^4+p^2-p+1}{p^4+p^2}\tilde{\m}_{\rm{qr}}(\ell,d)\\
&&+
\tfrac{p^4+p^2-p+1}{p^4+p^2}\tilde{\m}_{\rm{nq}}(\ell,d)+\tfrac{p^4-2p+1}{p^4-1}\tilde{\m}_{\rm{ni}}(\ell,d)\bigg).
\end{eqnarray*}
From these formulas one can directly see that, indeed, $\Gamma_1$ and $\Gamma_2$ have the same primitive length sets. Moreover, we get the following formula for the difference of the multiplicities
\begin{eqnarray*}
\m^o_{\Gamma_2}(\ell)-\m^o_{\Gamma_2}(\ell)&=&\sum_{d|n}\frac{n}{d}\bigg( (p-1)\m_{\rm{tr}}(\ell,d)-\tfrac{2}{p^3+p}\m_{\rm{qr}}(\ell,d)\\ \nonumber
&&+\tfrac{p^3-p^2-p+1}{p^4-1}\m_{\rm{ni}}(\ell,d)+\tfrac{p-1}{p}\tilde{\m}_{\rm{ir}}(\ell,d)\\ \nonumber
&&+\tfrac{p^4-p^3+p^2-p+2}{p^4+p^2}\tilde{\m}_{\rm{qr}}(\ell,d)+\tfrac{p^4-p^3+p^2-p}{p^4+p^2}\tilde{\m}_{\rm{nq}}(\ell,d).\\ \nonumber
&&+\tfrac{p^4-p^3-p+1}{p^4-1}\tilde{\m}_{\rm{ni}}(\ell,d)\bigg).
\end{eqnarray*}

We can also separate the sum over all classes in $\Gamma$ of length$\leq T$ into the different types; consequently, if we define $\pi_I(T)$ and $\tilde{\pi}_I(T)$ respectively by
$$\pi_I(T)=\sum_{\ell\leq T} \sum_{d|n}\frac{n}{d} \m_I(\ell,d) ,\quad\tilde{\pi}_I(T)=\sum_{\ell\leq T} \sum_{d|n}\frac{n}{d}\tilde{\m}_I(\ell,d) ,$$
we get that
\begin{eqnarray}\label{e:explicitdifference}
\sum_{\ell\leq T}(\m^o_{\Gamma_2}(\ell)-\m^o_{\Gamma_1}(\ell))
&=&(p-1)\pi_{\rm{tr}}(T)+\tfrac{p^3-p^2-p+1}{p^4-1}\pi_{\rm{ni}}(T)\\\nonumber
&+& \tfrac{p-1}{p}\tilde{\pi}_{\rm{ir}}(T)+\tfrac{p^4-p^3+p^2-p+2}{p^4+p^2}\tilde{\pi}_{\rm{qr}}(T)\\\nonumber
&+& \tfrac{p^4-p^3+p^2-p}{p^4+p^2}\tilde{\pi}_{\rm{nq}}(T)+\tfrac{p^4-p^3-p+1}{p^4-1}\tilde{\pi}_{\rm{ni}}(T)\\\nonumber
&-& \tfrac{2}{p^3+p}\pi_{\rm{qr}}(T).
\end{eqnarray}

Form the Prime Geodesic Theorem we know that for each of the two lattices
$\sum_{\ell\leq T} \m^o_{\Gamma_j}(\ell)=\rm{Li}(e^{2\rho T})+O(e^{\alpha T})$,
with $\alpha<2\rho$.  Hence, the difference (without absolute values) satisfies
$$\sum_{\ell\leq T} (\m^o_{\Gamma_2}(\ell)-\m^o_{\Gamma_1}(\ell))=O(e^{\alpha T}).$$
This estimate together with \eqref{e:explicitdifference} implies that
\begin{eqnarray}\label{e:piqr}
\tfrac{2}{p^3+p}\pi_{\rm{qr}}(T)&=& (p-1)\pi_{\rm{tr}}(T)+\tfrac{p^3-p^2-p+1}{p^4-1}\pi_{\rm{ni}}(T)+ \tfrac{p-1}{p}\tilde{\pi}_{\rm{ir}}(T)\\ \nonumber
&+&\tfrac{p^4-p^3+p^2-p+2}{p^4+p^2}\tilde{\pi}_{\rm{qr}}(T)+\tfrac{p^4-p^3+p^2-p}{p^4+p^2}\tilde{\pi}_{\rm{nq}}(T)\\ \nonumber
&+& \tfrac{p^4-p^3-p+1}{p^4-1}\tilde{\pi}_{\rm{ni}}(T)+ O(e^{\alpha T})
\end{eqnarray}

On the other hand, we can bound
\begin{eqnarray*}
|\m^o_{\Gamma_2}(\ell)-\m^o_{\Gamma_2}(\ell)|&\leq &\sum_{d|n}\frac{n}{d}\bigg( (p-1)\m_{\rm{tr}}(\ell,d)+\tfrac{2}{p^3+p}\m_{\rm{qr}}(\ell,d)\\
&&+\tfrac{p^3-p^2-p+1}{p^4-1}\m_{\rm{ni}}(\ell,d)+\tfrac{p-1}{p}\tilde{\m}_{\rm{ir}}(\ell,d)\\
&&+\tfrac{p^4-p^3+p^2-p+2}{p^4+p^2}\tilde{\m}_{\rm{qr}}(\ell,d)+\tfrac{p^4-p^3+p^2-p}{p^4+p^2}\tilde{\m}_{\rm{nq}}(\ell,d)\\
&&+\tfrac{p^4-p^3-p+1}{p^4-1}\tilde{\m}_{\rm{ni}}(\ell,d)\bigg),
\end{eqnarray*}
implying that
\begin{eqnarray*}
\sum_{\ell\leq T}|\m^o_{\Gamma_2}(\ell)-\m^o_{\Gamma_1}(\ell)|
&\leq &(p-1)\pi_{\rm{tr}}(T)+\tfrac{p^3-p^2-p+1}{p^4-1}\pi_{\rm{ni}}(T)\\\nonumber
&+& \tfrac{p-1}{p}\tilde{\pi}_{\rm{ir}}(T)+\tfrac{p^4-p^3+p^2-p+2}{p^4+p^2}\tilde{\pi}_{\rm{qr}}(T)\\\nonumber
&+& \tfrac{p^4-p^3+p^2-p}{p^4+p^2}\tilde{\pi}_{\rm{nq}}(T)+\tfrac{p^4-p^3-p+1}{p^4-1}\tilde{\pi}_{\rm{ni}}(T)\\\nonumber
&+& \tfrac{2}{p^3+p}\pi_{\rm{qr}}(T)=\tfrac{4}{p^3+p}\pi_{\rm{qr}}(T)+O(e^{\alpha T}).
\end{eqnarray*}
where the last line follows from \eqref{e:piqr}.
Finally, since
\[\tfrac{p+1}{p^3+p}\pi_{\rm{qr}}(T)\leq \sum_{\ell\leq T} \m^o_{\Gamma_1}(\ell)= \rm{Li}(e^{2\rho T})+O(e^{\alpha T}),\]
we get that indeed
\[\sum_{\ell\leq T} |\m^o_{\Gamma_2}(\ell)-\m^o_{\Gamma_1}(\ell)|\leq \frac{4\rm{Li}(e^{2\rho T})}{p+1}+O(e^{\alpha T}).\]
\end{proof}

\subsection{Comparison to splitting of primes in number fields}\label{s:Perlis}
In the examples constructed above, the volumes of the two non iso-spectral spaces grow with $p$.
It is thus possible that a positive density threshold can hold under the additional assumption that the volume is uniformly bounded.
To illustrate this we will look at a similar situation in the analogous question regarding the splitting of prime ideals in number fields.

Let $K/\bbQ$ denote a finite field extension and $\calO_K$ the corresponding ring of integers. For any prime number $p\in \bbZ$ we have a decomposition  $p\calO_K=\frakp_1^{e_1}\cdots\frakp_r^{e_r}$  with $\frakp_j\subseteq \calO_K$ the prime ideals dividing $p$ and $e_j$ the ramification degrees (which are all one except for the ramified primes dividing the discriminant). Each quotient $\calO_K/\frakp_j$ is a finite field of order $p^{f_j}$ where $f_j$ denotes the inertia degree of $\frakp_j$. The splitting type of $p$ in $K$ is then given by the numbers $\calA_K(p,d)=\#\{j| f_j=d\}$ for $d=1,\ldots,n=[K:\bbQ]$.

Let $K_1,K_2$ denote two number fields. We say that a prime number $p\in \bbZ$ have the same splitting types in $K_1$ and $K_2$ if $\calA_{K_1}(p,d)=\calA_{K_2}(p,d)$ for all $d=1,\ldots, n$. In \cite[Theorem 1]{Perlis77}, Perlis showed that if all but finitely many primes have the same splitting types in $K_1$ and $K_2$, then all primes have the same splitting types. In this case the fields have the same Dedekind Zeta functions, but they are not (necessarily) isomorphic; such fields are called arithmetically equivalent fields.

The finite set of exceptions in Perlis's theorem can be replaced with an infinite set of density zero. A positive density threshold that is independent of the number fields probably does not hold. Nevertheless, it is possible to give a positive density threshold which depends on the degree of the smallest Galois field $L/\bbQ$ containing $K_1$ and $K_2$. Specifically, one can show
\begin{prop}
In the above setting, for any $1\leq d\leq n$ if the set $$P_{\mathrm{bad}}(d)=\{p\; \mathrm{unramified} |\calA_{K_1}(p,d)\neq \calA_{K_2}(p,d)\}$$ has Dirichlet density smaller then $1/|\Gal(L/\bbQ)|$ then $P_{\mathrm{bad}}(d)=\emptyset$. In particular, if $P_{\rm{bad}}=\cup_{d=1}^n P_{\mathrm{bad}}(d)$ has density below $1/|\Gal(L/\bbQ)|$ then $K_1$ and $K_2$ are arithmetically equivalent.
\end{prop}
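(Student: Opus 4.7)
The plan is to use Chebotarev's density theorem applied to the Galois closure $L$. Set $G=\Gal(L/\bbQ)$ and $H_i=\Gal(L/K_i)$ for $i=1,2$, so that $[G:H_i]=[K_i:\bbQ]=n$. For any unramified prime $p\in\bbZ$ the Frobenius elements above $p$ form a well-defined conjugacy class $[\sigma_p]\subset G$, and by the standard correspondence between prime splittings and Frobenius cycle structures, $\calA_{K_i}(p,d)$ equals the number of orbits of length $d$ in the action of $\langle\sigma_p\rangle$ on the coset space $G/H_i$ by left multiplication.

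First I would observe that this cycle structure depends only on the conjugacy class $[\sigma_p]$: conjugating $\sigma_p$ by $\tau\in G$ intertwines its action on $G/H_i$ with that of $\tau\sigma_p\tau^{-1}$ via left multiplication by $\tau$. Consequently the set $P_{\mathrm{bad}}(d)$ is a union of Frobenius fibres: if $p\in P_{\mathrm{bad}}(d)$ and $p'$ is any other unramified prime with $\sigma_{p'}\in[\sigma_p]$, then $p'\in P_{\mathrm{bad}}(d)$ as well.

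Next I would invoke Chebotarev: each non-empty conjugacy class $[g]\subseteq G$ is realized as $[\sigma_p]$ for a set of unramified primes of Dirichlet density $|[g]|/|G|$, which is always at least $1/|G|$. Hence if $P_{\mathrm{bad}}(d)\neq\emptyset$, picking any $p\in P_{\mathrm{bad}}(d)$ and setting $[g]=[\sigma_p]$, the entire Frobenius fibre of $[g]$ lies in $P_{\mathrm{bad}}(d)$, forcing its Dirichlet density to be at least $1/|G|$ and contradicting the hypothesis. Thus $P_{\mathrm{bad}}(d)=\emptyset$.

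For the second statement, if $P_{\mathrm{bad}}$ has density strictly below $1/|G|$ then so does each $P_{\mathrm{bad}}(d)\subseteq P_{\mathrm{bad}}$; applying the first part for each $1\leq d\leq n$ gives $\calA_{K_1}(p,d)=\calA_{K_2}(p,d)$ for all unramified primes $p$ and all $d$. Comparing Euler products of the Dedekind zeta functions then shows $\zeta_{K_1}=\zeta_{K_2}$, which is the definition of arithmetic equivalence. The only non-trivial input is the translation of splitting types into Frobenius cycle structures on $G/H_i$, which is a standard fact from algebraic number theory; once this is in hand the argument is a direct application of Chebotarev and presents no serious obstacle.
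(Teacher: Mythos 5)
Your proposal is correct and follows essentially the same route as the paper: both arguments reduce to the observation that the splitting data $\calA_{K_i}(p,d)$ depends only on the Frobenius conjugacy class of $p$ in $\Gal(L/\bbQ)$, so $P_{\mathrm{bad}}(d)$ is a union of full Chebotarev fibres, each of density at least $1/|\Gal(L/\bbQ)|$. The only cosmetic difference is that you read off the splitting type from the cycle structure of $\sigma_p$ on $G/H_i$, whereas the paper encodes the same information via the permutation characters $\chi_{B_j}$ and M\"obius inversion.
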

\begin{proof}
The proof is a direct result of Chebotarev's Density Theorem combined with the following observation.
For any prime $p$ denote by $\sigma_p$ the Frobenius conjugacy class in $\Gal(L/\bbQ)$. Denote by $A=\Gal(L/\bbQ)$ and by $B_j=\Gal(L/K_j)$. Then (using the same argument as in section \ref{s:Sunada}) the condition $\calA_{K_1}(p,d)\neq \calA_{K_2}(p,d)$ is equivalent to the condition
\[\sum_{m|d}\mu(d/m)\chi_{B_1}(\sigma_\frakp^m)\neq \sum_{m|d}\mu(d/m)\chi_{B_2}(\sigma_\frakp^m),\]
where $\chi_{B_j}$ is, as before, the character of the permutation representation of $A$ on $A/B_j$.

Consider the set
$$S_{\rm{bad}}(d)=\{a\in A|\sum_{m|d}\mu(d/m)\chi_{B_1}(a^m)\neq \sum_{m|d}\mu(d/m)\chi_{B_2}(a^m)\}.$$
This set is invariant under conjugation in $A$ and satisfies that
$$P_{\rm{bad}}(d)=\{p \mbox{ unramified}| \sigma_p\in S_{\rm{bad}}(d)\}.$$
Now, the Chebotarev density theorem implies that $P_{\rm{bad}}(d)$ has Dirichlet density $|S(d)|/|A|$.
Thus if the density of $P_{\rm{bad}}(d)$ is smaller then $1/|A|$ then $S_{\rm{bad}}(d)=\emptyset$ implying that $P_{\rm{bad}}(d)=\emptyset$ as claimed.
\end{proof}
\begin{rem}
It follows from the proof that if $P_{\rm{bad}}(1)=\emptyset$ then also $S_{\rm{bad}}(1)=\emptyset$ implying that $\chi_{B_1}=\chi_{B_2}$ and hence that $S_{\rm{bad}}(d)=\emptyset$ for all $d$. Consequently, we see that to prove arithmetic equivalence one does not need to consider the full splitting type but only the number of prime ideals of inertia degree one dividing each prime.
\end{rem}


\def\cprime{$'$} \def\cprime{$'$}
\providecommand{\bysame}{\leavevmode\hbox to3em{\hrulefill}\thinspace}
\providecommand{\MR}{\relax\ifhmode\unskip\space\fi MR }
\MRhref{is called by the amsart/book/proc definition of \MR.}
\providecommand{\MRhref}[2]{%
  \href{http://www.ams.org/mathscinet-getitem?mr=#1}{#2}
}
\providecommand{\href}[2]{#2}


\end{document}